\newtheorem{theorem}{Theorem}[section]
\newtheorem{proposition}{Proposition}[section]
\newtheorem{remark}{Remark}[section]
\newcommand{\NN}{\mathbb{N}}
\newcommand{\ZZ}{\mathbb{Z}}
\newcommand{\RR}{\mathbb{R}}
\newcommand{\CC}{\mathbb{C}}
\newcommand{\norm}[2]{\left\|#1\right\|_{#2}}
\newcommand{\C}{\mathcal C}
\newcommand{\PP}{\mathcal P}
\newcommand{\fwd}{\partial_h^+}
\newcommand{\bwd}{\partial_h^-}
\newcommand{\Dh}{\partial_h}
\newcommand{\Dhc}{\Delta_{c,h}}
\newcommand{\RT}{\widetilde{\mathcal R}}
\newcommand{\BF}[1]{\boldsymbol{#1}}
\newcommand{\acc}[1]{\accentset{\circ}{#1}}
\newcommand{\bx}{{\boldsymbol{x}}}
\newcommand{\bxi}{{\boldsymbol{\xi}}}
\newcommand{\bj}{{\boldsymbol{j}}}
\definecolor{Gray}{gray}{0.75}
\numberwithin{equation}{section}
\begin{document}
	
\title{Gaussian Beam ansatz for finite difference wave equations} 
\subjclass[2020]{35C20, 35L05, 65M06.}
\keywords{Wave equation, Finite difference, Gaussian Beam ansatz.}
	
\author[U. Biccari]{Umberto Biccari}
\address[U. Biccari]{Chair of Computational Mathematics, Fundaci\'on Deusto,	Avenida de las Universidades, 24, 48007 Bilbao, Basque Country, Spain.}
\email[]{umberto.biccari@deusto.es, u.biccari@gmail.com}

\thanks{E. Zuazua has been funded by the Alexander von Humboldt-Professorship program, the ModConFlex Marie Curie Action, HORIZON-MSCA-2021-DN-01, the COST Action MAT-DYN-NET, the Transregio 154 Project ``Mathematical Modelling, Simulation and Optimization Using the Example of Gas Networks'' of the DFG, grants PID2020-112617GB-C22 and TED2021-131390B-I00 of MINECO (Spain), and by the Madrid Goverment -- UAM Agreement for the Excellence of the University Research Staff in the context of the V PRICIT (Regional Programme of Research and Technological Innovation)}
	
\author[E. Zuazua]{Enrique Zuazua}
\address[E. Zuazua]{Friedrich-Alexander-Universit\"at N\"urnberg, Department of Mathematics, Chair for Dynamics, Control, Machine Learning and Numerics (Alexander von Humboldt Professorship), Cauerstr. 11, 91058 Erlangen, Germany.
	\newline \indent 
	Chair of Computational Mathematics, Fundaci\'on Deusto,	Avenida de las Universidades, 24, 48007 Bilbao, Basque Country, Spain. 
	\newline \indent
	Universidad Aut\'onoma de Madrid, Departamento de Matem\'aticas, Ciudad Universitaria de Cantoblanco, 28049 Madrid, Spain.}
\email{enrique.zuazua@fau.de}

\begin{abstract}

This work is concerned with the construction of Gaussian Beam (GB) solutions for the numerical approximation of wave equations, semi-discretized in space by finite difference schemes. GB are high-frequency solutions whose propagation can be described, both at the continuous and at the semi-discrete levels, by microlocal tools along the bi-characteristics of the corresponding Hamiltonian. Their dynamics differ in the continuous and the semi-discrete setting, because of the high-frequency gap between the Hamiltonians. In particular, numerical high-frequency solutions can exhibit spurious pathological behaviors, such as lack of propagation in space, contrary to the classical space-time propagation properties of continuous waves. This gap between the behavior of  continuous and numerical waves introduces also significant analytical difficulties, since classical GB constructions cannot be immediately extrapolated to the finite difference setting, and need to be properly tailored to accurately detect the propagation properties in discrete media. Our main objective in this paper is to present a general and rigorous construction of the GB ansatz for finite difference wave equations, and corroborate this construction through accurate numerical simulations.

\end{abstract}
	
\maketitle 

\tableofcontents

\section{Introduction}

This article deals with the construction of Gaussian Beam (GB) solutions for the numerical approximation of wave equations, semi-discretized in space by finite difference schemes.

GB are high-frequency quasi-solutions of wave-like equations concentrated on ray paths, trajectories of the underlying Hamiltonian system, whose amplitudes at any given time are nearly Gaussian distributions up to some small error. These waves propagate in a very simple fashion, and it is possible to construct them rather explicitly. Moreover, one can use them as fundamental building blocks of wave motion, to study general solutions of PDE and their propagation properties. 

Our model of reference in this paper will be the following constant coefficients wave equation defined on $\RR^d$, $d\geq 1$:
\begin{align}\label{eq:mainEqR}
	\begin{cases}
		u_{tt}(\bx,t) -c\Delta u(\bx,t) = 0, & (\bx,t)\in \RR^d\times (0,T)
		\\
		u(\bx,0) = u_0(\bx), \;\; u_t(\bx,0) = u_1(\bx), & \bx \in\RR^d.
	\end{cases}
\end{align}

It is well-known (see, e.g., \cite{ralston1982gaussian}) that there exist GB solutions of \eqref{eq:mainEqR} whose energy is localized near certain curves $\Gamma:(\bx(t),t)\in\RR^d\times\RR$ in space-time, the so-called (bi-characteristic) rays, solutions of a Hamiltonian system of ordinary differential equations. In fact, given a ray path $(\bx(t),t)$ it is possible to construct a sequence of quasi-solutions $(u^k)_{k\in\NN}$ of the wave equation \eqref{eq:mainEqR} such that the amount of their energy outside a small cylinder centered at $\bx(t)$ is exponentially small. 

In this work, we extend this analysis to semi-discrete approximations of \eqref{eq:mainEqR}, obtained by means of a finite difference scheme. To this end, let 
\begin{align}\label{eq:mesh}
	\mathcal G^h:= \Big\{\bx_\bj := \bj h,\,\bj\in\ZZ^d\,\Big\}
\end{align}
be a uniform mesh of size $h$, $u_\bj(t):=u(\bx_\bj,t)$ and 
\begin{align*}
	\Dhc u_\bj(t):= \frac{c}{h^2}\sum_{i=1}^d \Big(u_{\bj+\BF{e}_i}-2u_\bj+u_{\bj-\BF{e}_i}\Big)
\end{align*}
be the finite difference approximation on $\mathcal G^h$ of the Laplacian $c\Delta$, where $(\BF{e}_i)_{i=1}^d$ denotes the canonical basis in $\RR^d$. With this notation, let us consider the following semi-discrete approximation of the wave equation \eqref{eq:mainEqR}
\begin{align}\label{eq:mainEqFDintro}
	\begin{cases}
		u_\bj''(t)-\Dhc u_\bj(t) = 0, & \bj\in\ZZ^d,\;\;t\in(0,T)
		\\
		u_\bj(0) = u_\bj^0, \;\;\; u_\bj'(0) = u_\bj^1, & \bj\in\ZZ^d.
	\end{cases}
\end{align}

As illustrated through numerical simulations in \cite{biccari2020propagation,marica2015propagation}, also for \eqref{eq:mainEqFDintro} - at least in space dimension $d=1,2$ - there exist quasi-solutions concentrated along the corresponding bi-characteristic rays. However, the propagation properties of these solutions change substantially with respect to their continuous counterpart, due to relevant changes in the dynamical behavior of the rays. In this paper, we are going to show that these propagation properties can be completely understood by properly constructing a GB ansatz for \eqref{eq:mainEqFDintro}. 

This paper is organized as follows. In Section \ref{sec:motivation}, we motivate our study and discuss some existing bibliography related with our work. In Section \ref{sec:Hamiltonian}, we introduce the Hamiltonian systems for the continuous and finite difference wave equations, and discuss the main differences between the corresponding rays of geometric optics. In Section \ref{sec:GBcontinuous}, we briefly recall the construction of GB solutions for the continuous wave equation, for which we will follow the nowadays classical approach of \cite{ralston1982gaussian} (see also \cite{macia2002lack}). In Section \ref{sec:GBdiscrete}, we adapt this construction to produce a GB ansatz for the semi-discrete problem \eqref{eq:mainEqFDintro}. In Section \ref{sec:numerics}, we present and discuss some numerical simulations corroborating our theoretical results. Finally, in Section \ref{sec:conclusions}, we gather our conclusions and present some open problems related to our work.

\section{Motivations and bibliographical discussion}\label{sec:motivation}
The computation of GB for wave-like equations is a very classical technique, dating back at least to the early 1970s, when it was originally employed to study resonances in lasers \cite{arnaud1973hamiltonian,babich1972asymptotic}. Since then, this approach has branched out to an ample fan of different fields.

In \cite{hormander1971existence,ralston1982gaussian}, these constructions have been used to understand the propagation of singularities in PDE. Later on, GB have been employed for the resolution of high frequency waves near caustics, with geophysical applications, for instance to model the seismic wave field \cite{cerveny1982computation} or to study seismic migration \cite{hill2001prestack}. More recent works in this direction include studies of gravity waves \cite{tanushev2007mountain}, the semi-classical Schr\"odinger equation \cite{jin2008gaussian,leung2009eulerian}, or acoustic wave equations \cite{motamed2010taylor,tanushev2008superpositions}. Finally, GB have also been widely employed by the control theory community, to describe the observability and controllability properties of wave-like equations. An incomplete literature on applications of GB to control includes \cite{bardos1992sharp,burq1997condition,burq1998controle,macia2002lack}.

GB are closely related to geometric optics, also known as the Wentzel-Kramers-Brillouin (WKB) method or ray-tracing \cite{brillouin1926mecanique,engquist2003computational,keller1962geometrical,runborg2007mathematical}. In both approaches, the solution of the PDE is assumed to be of the form
\begin{align*}
	u^k(x,t) = k^{-\frac 34}a(\bx,t)e^{ik\phi(\bx,t)}, \quad (\bx,t)\in\RR^d\times\RR,
\end{align*}
where $k\in\NN^\star = \NN\setminus\{0\}$ is the high-frequency parameter, $a$ is the amplitude function, and $\phi$ is the phase function. What differentiates geometric optics and GB are the assumptions one makes on the phase. 

In the geometric optics method, $\phi$ is assumed to be a real valued function. This, however, has a main drawback, since solving the equation for the phase using the method of characteristics may lead to singularities which invalidate the approximation. Generally speaking, this breakdown occurs at the intersection of nearby rays, resulting in a caustic where geometric optics incorrectly predicts that the solution's amplitude is infinite. 

GB, on the contrary, are built with a complex valued phase and do not develop caustics. Intuitively speaking, this is because these solutions are concentrated on a single ray that cannot self-intersect. Mathematically, this stems from the fact that the standard symplectic form and its complexification are preserved along the flow defined by the Hessian matrix of the complex valued phase $\phi$ \cite{ralston1982gaussian,tanushev2008superpositions}. Thus, GB are global solutions of the PDE, and represent a very effective tool to understand wave propagation. 

The construction of GB is nowadays well-understood for a large class of PDE. Starting from the previously mentioned works \cite{babich1972asymptotic,hormander1971existence}, these techniques have been later extended by several authors, and complemented with the developments of tools like microlocal defect measures (introduced independently by G\'erard in \cite{gerard1991microlocal} and by Tartar in \cite{tartar1990h}, in the context of nonlinear partial differential equations and of homogenization, respectively) or Wigner measures \cite{lions1993mesures,markowich1994wigner,wigner1932quantum}.

Nevertheless, these kinds of approaches are still only partially developed at the numerical level. If, on the one hand, we can mention some  works on the extension of microlocal techniques to the study of the propagation properties for discrete waves \cite{macia2002propagacion,marica2015propagation}, on the other hand, we are not aware of any contribution on the GB construction for discretized PDE.

This paper aims at filling this gap, by analyzing the GB approximation of finite difference wave equations and using them to describe their propagation properties. 

The analysis of propagation properties of numerical waves obtained through a finite difference discretization on uniform or non-uniform meshes is a topic which has been extensively investigated in the literature. Among other contributions, we mention works of Trefethen \cite{trefethen1982group,trefethen1982wave} and Vichnevetsky \cite{vichnevetsky1980propagation,vichnevetsky1981energy,vichnevetsky1981propagation,vichnevetsky1987wave,vichnevetsky1982fourier}, as well as the survey paper \cite{zuazua2005propagation}. In particular, it is by now well-known that the finite difference discretization of hyperbolic equations introduces spurious high-frequency solutions with pathological propagation behaviors that are not detected in their continuous counterpart. 

The employment of GB helps understanding the reason of this discrepancy. In fact, knowing that GB solutions remain concentrated along bi-characteristic rays allows to completely describe the propagation properties of numerical solutions and detect the pathologies that the discretization introduces. At the continuous level, if the coefficients of the equation are constants, the bi-characteristic rays are straight lines and travel with a uniform velocity. In the case of variable coefficients, instead, the heterogeneity of the medium where waves propagate produces the bending of the rays and, consequently, an increase or decrease in their velocity.

On the other hand, the finite difference space semi-discretization of the equation may introduce different dynamics, with a series of unexpected propagation properties at high frequencies, that substantially differ from the expected behavior of the continuous equation. For instance, one can generate spurious solutions traveling at arbitrarily small velocities \cite{trefethen1982group} which, therefore, show lack of propagation in space. 

As we shall see, this phenomenon is related to the particular nature of the discrete group velocity which, in contrast with the continuous equation, may vanish at certain frequencies. In addition, the introduction of a non-uniform mesh for the discretization of the equation makes the situation even more intricate. For instance, as indicated in \cite{biccari2020propagation,marica2015propagation,vichnevetsky1987wave}, for some numerical grids the rays of geometric optics may present internal reflections, meaning that the waves change direction without hitting the boundary of the domain where they propagate.

All these pathologies are purely numerical, and they are related to changes in the Hamiltonian system giving the equations of the rays. In  \cite{biccari2020propagation,macia2002propagacion,marica2015propagation}, a complete discussion of these spurious dynamics has been carried out by means of microlocal techniques, supported by sharp numerical simulations. This work complements the aforementioned contributions, by providing a simple yet accurate GB ansatz to understand wave propagation in the finite difference setting. We anticipate that our analysis will be conducted mostly in the frequency regime $k\sim h^{-1}$ (that is the one at which the interesting pathologies discussed in \cite{biccari2020propagation,macia2002propagacion,marica2015propagation} appear), although we will also present an heuristic study of other relevant ranges of frequency.  

\section{Hamiltonian system and rays of geometric optics}\label{sec:Hamiltonian}

In this section, we discuss briefly the behavior of the rays of geometric optics associated with the continuous and semi-discrete wave equations \eqref{eq:mainEqR} and \eqref{eq:mainEqFDintro}. 

\subsection{Rays of geometric optics for the continuous wave equation}

The rays of geometric optics are defined as the projections on the physical space $(\bx,t)$ of the bi-characteristic rays given by the Hamiltonian system associated with the principal symbol of the wave operator. In the case of the wave equation \eqref{eq:mainEqR}, this principal symbol is given by 
\begin{align}\label{eq:PScontinuous}
	\PP(\bx,t,\bxi,\tau): = -\tau^2 + c|\bxi|^2.
\end{align}
and the bi-characteristic rays are the curves 
\begin{align*}
	s\mapsto (\bx(s),t(s),\bxi(s),\tau(s))\in\RR^d\times\RR\times\RR^d\times\RR
\end{align*}
solving the first-order ODE system:
\begin{align}\label{eq:HamiltonianSyst}
	\begin{cases}
		\dot\bx(s) = \nabla_\bxi\PP(\bx(s),t(s),\bxi(s),\tau(s)) & \quad \bx(0) = \bx_0
		\\
		\dot t(s) \;= \PP_\tau(\bx(s),t(s),\bxi(s),\tau(s)) & \quad t(0) \;= t_0
		\\
		\dot\bxi(s) \,= -\nabla_\bx\PP(\bx(s),t(s),\bxi(s),\tau(s)) & \quad \bxi(0)\,=\bxi_0\neq \BF{0}
		\\
		\dot\tau(s) \,= -\PP_t(\bx(s),t(s),\bxi(s),\tau(s)) & \quad \tau(0) \,= \tau_0 
	\end{cases}
\end{align}
with initial data $(\bx_0,t_0,\bxi_0,\tau_0)\in\RR^d\times\RR\times\RR^d\times\RR$ such that 
\begin{align}\label{eq:initDataContinuous}
	\PP(\bx_0,t_0,\bxi_0,\tau_0) = 0. 
\end{align}

In what follows, without losing generality, we will assume that $t_0=0$. Then, we immediately obtain from \eqref{eq:PScontinuous} and \eqref{eq:HamiltonianSyst} the new system
\begin{align}\label{eq:HamiltonianSystNew}
	\begin{cases}
		\dot\bx(s) = 2c\bxi_0 & \quad \bx(0) = \bx_0
		\\
		t(s) \;= -2\tau_0s 
		\\
		\bxi(s) \,= \bxi_0
		\\
		\tau(s) \,= \tau_0 
	\end{cases}
\end{align}
from which, inverting the variables $s$ and $t$ in the second equation, we find the following expression for the ray $\bx(t)$
\begin{align}\label{eq:ray_prel}
	\bx(t) = \bx_0 - \frac{c\bxi_0}{\tau_0}t.
\end{align}
Moreover, from \eqref{eq:initDataContinuous}, we have that the initial value $\tau_0$ has to be chosen such that
\begin{align}\label{eq:tau0cond}
	\tau_0^2 = c|\bxi_0|^2.
\end{align}
Using this in \eqref{eq:ray_prel}, we finally obtain 
\begin{align}\label{eq:ray}
	\bx^\pm(t) = \bx_0 \pm \sqrt{c}\frac{\bxi_0}{|\bxi_0|}t,
\end{align}
i.e., the rays $\bx^\pm(t)$ are straight lines which propagate from $\bx_0$ with constant velocity $\sqrt{c}$ and in the direction prescribed by the unitary vector $\bxi_0/|\bxi_0|\in\RR^d$. Notice that, in space dimension $d=1$, this is consistent with the D'Alambert's formula, according to which, given the initial data $(u_0,u_1)$, the corresponding solution of \eqref{eq:mainEqR} can be uniquely decomposed into two components - each one propagating along one of the characteristics $x^\pm(t)$ - and is given by 
\begin{align*}
	u(x,t) = \frac 12 \Big[u_0(x+\sqrt{c}t) + u_0(x-\sqrt{c}t)\Big] + \frac 12 \int_{x-\sqrt{c}t}^{x+\sqrt{c}t} u_1(z)\,dz.
\end{align*}

\subsection{Rays of geometric optics for the semi-discrete wave equation}
When considering finite difference approximations of \eqref{eq:mainEqR}, namely \eqref{eq:mainEqFDintro}, the principal symbol of the wave operator becomes
\begin{align}\label{eq:PSFD}
	\PP_{fd}(\bx,t,\bxi,\tau): = -\tau^2 + 4c\left|\sin\left(\frac{\bxi}{2}\right)\right|^2 = -\tau^2 + 4c\sum_{i=1}^d \sin^2\left(\frac{\xi_i}{2}\right),
\end{align}
where we have denoted by $\xi_i$, $i\in\{1,\ldots,d\}$, the $i$-th component of the vector $\bxi\in\RR^d$, while with the notation $|\cdot|$ we refer to the classical Euclidean norm. 

This trigonometric symbol \eqref{eq:PSFD} can be easily inferred by taking in \eqref{eq:mainEqFDintro} plane wave solutions of the form 
\begin{align}\label{eq:planWaveFD}
	u_\bj(t) = e^{i\left(\frac{\tau t}{h} + \bxi\cdot\bj\right)} = e^{\frac ih\left(\tau t + \bxi\cdot\bx_\bj\right)},
\end{align}
where $\tau$ is the temporal frequency and $\bxi$ the spatial frequency (also known as the wave number).

Notice that these solutions \eqref{eq:planWaveFD} are in a high-frequency regime of order $h^{-1}$. This results in a principal symbol \eqref{eq:PSFD} which is independent of the mesh parameter $h$. 

At this regard, we shall stress that in some classical references (see, e.g., \cite{trefethen1982wave}) the authors consider plane waves in a uniform (independent of $h$) frequency regime, i.e.
\begin{align*}
	u_\bj(t) = e^{i\left(\tau t + \bxi\cdot\bx_\bj\right)},
\end{align*}
whose associate principal symbol 
\begin{align}\label{eq:FDsymbol_full}
	\PP_{fd,h}(\bx,t,\bxi,\tau): = -\tau^2 + \frac{4c}{h^2}\left|\sin\left(\frac{h\bxi}{2}\right)\right|^2 = -\tau^2 + \frac{4c}{h^2}\sum_{i=1}^d \sin^2\left(\frac{h\xi_i}{2}\right)
\end{align}
depends explicitly on $h$. But since one of the motivations of our study is to provide a deeper understanding of the high-frequency pathologies of finite-difference wave propagation, it is more natural to work in the high-frequency regime of \eqref{eq:planWaveFD}.

There is a clear substantial difference in this symbol \eqref{eq:PSFD} with respect to its continuous counterpart \eqref{eq:PScontinuous}. 
In \eqref{eq:PSFD}, the Fourier symbol $|\bxi|^2$ of the Laplace operator is replaced by 
\begin{align*}
	4c\sum_{i=1}^d \sin^2\left(\frac{\xi_i}{2}\right),
\end{align*}
corresponding to the finite difference approximation of the second-order space derivative. This affects also the behavior of the bi-characteristic rays, that are now given by the curves 
\begin{align*}
	s\mapsto (\bx_{fd}(s),t(s),\bxi_{fd}(s),\tau(s))\in\RR^d\times\RR\times\RR^d\times\RR
\end{align*}
solving the first-order ODE system
\begin{align}\label{eq:HamiltonianSystFD}
	\begin{cases}
		\dot\bx_{fd}(s) = \nabla_\bxi\PP_{fd}(\bx_{fd}(s),t(s),\bxi_{fd}(s),\tau(s)) & \quad \bx_{fd}(0) = \bx_0
		\\ 
		\dot t(s) = \partial_\tau\PP_{fd}(\bx_{fd}(s),t(s),\bxi_{fd}(s),\tau(s)) & \quad t(0) = t_0
		\\
		\dot\bxi_{fd}(s) = -\nabla_\bx\PP_{fd}(\bx_{fd}(s),t(s),\bxi_{fd}(s),\tau(s)) & \quad \bxi_{fd}(0)=\bxi_0\neq \BF{0}
		\\
		\dot\tau(s) = -\partial_t\PP_{fd}(\bx_{fd}(s),t(s),\bxi_{fd}(s),\tau(s)) = 0 & \quad \tau(0) = \tau_0 
	\end{cases}
\end{align}
with initial data $(\bx_0,t_0,\bxi_0,\tau_0)\in\RR^d\times\RR\times\RR^d\times\RR$ such that 
\begin{align}\label{eq:initDataFD}
	\PP_{fd}(\bx_0,t_0,\bxi_0,\tau_0) = 0. 
\end{align}

Once again, without losing generality, we will assume that $t_0=0$. Then, we immediately obtain from \eqref{eq:PSFD} and \eqref{eq:HamiltonianSystFD} the new system
\begin{align}\label{eq:HamiltonianSystNewFD}
	\begin{cases}
		\dot\bx_{fd}(s) = 2c\sin(\bxi_0) & \quad \bx(0) = \bx_0
		\\
		t(s) = -2\tau_0s 
		\\
		\bxi_{fd}(s) = \bxi_0
		\\
		\tau(s) = \tau_0 
	\end{cases}
\end{align}
from which, inverting the variables $s$ and $t$ in the second equation, we find the following expression for the ray $\bx_{fd}(t)$
\begin{align}\label{eq:rayFD_prel}
	\bx_{fd}(t) = \bx_0 - \frac{c\sin(\bxi_0)}{\tau_0}t.
\end{align}
Moreover, from \eqref{eq:initDataFD}, we have that the initial value $\tau_0$ has to be chosen such that 
\begin{align*}
	\tau_0^2 = 4c\left|\sin\left(\frac{\bxi_0}{2}\right)\right|^2.
\end{align*}
Using this in \eqref{eq:rayFD_prel}, we then obtain 
\begin{align*}
	\bx^\pm_{fd}(t) = \bx_0 \pm \frac{\sqrt{c}\sin(\bxi_0)}{2\left|\sin\left(\frac{\bxi_0}{2}\right)\right|}t.
\end{align*}
Finally, by observing that
\begin{align*}
	\sin(\bxi_0) = 2\sin\left(\frac{\bxi_0}{2}\right)\odot \cos\left(\frac{\bxi_0}{2}\right),
\end{align*}
where $\odot$ denotes the standard Hadamard product \footnote{We recall that, given two vectors $\BF{v} = (v_1,v_2,\ldots,v_d)\in\RR^d$ and $\BF{w} = (w_1,w_2,\ldots,w_d)\in\RR^d$, their Hadamard product is the vector $\BF{v}\odot\BF{w} = \BF{z} = (z_1,z_2,\ldots,z_d)\in\RR^d$ with $z_i=v_iw_i$ for all $i\in\{1,\ldots,d\}$.}, we get the following expression for the characteristic rays
\begin{align}\label{eq:rayFD}
	\bx^\pm_{fd}(t) = \bx_0 \pm \sqrt{c}\cos\left(\frac{\bxi_0}{2}\right)\odot \frac{\sin\left(\frac{\bxi_0}{2}\right)}{\left|\sin\left(\frac{\bxi_0}{2}\right)\right|}t.
\end{align}

We then see that the rays $\bx^\pm_{fd}(t)$ are still straight lines originating from $\bx_0$ and propagating in the direction of the unitary vector
\begin{align*}
	\frac{\sin\left(\frac{\bxi_0}{2}\right)}{\left|\sin\left(\frac{\bxi_0}{2}\right)\right|},
\end{align*}
but this time with a velocity of propagation (also denoted \textbf{group velocity} in some classical references - see \cite{trefethen1982group,vichnevetsky1981energy})
\begin{align*}
	v = \sqrt{c}\left|\cos\left(\frac{\bxi_0}{2}\right)\right| 
\end{align*}
which is not constant anymore. Instead, it depends on the frequency $\bxi_0$ and vanishes whenever $\big|\cos(\bxi_0/2)\big| = 0$. This happens, for instance, if
\begin{align*}
	\bxi_0 = (2\BF{k}+\BF{1})\pi,\quad\text{ with } \BF{k}\in\ZZ^d \text{ and }\BF{1} = (1,1,\ldots,1)\in\RR^d.
\end{align*} 
In Figure \ref{fig:rayVel}, we display this phenomenon in the one-dimensional case $d=1$.
\begin{figure}[h]
	\includegraphics[width=0.85\textwidth]{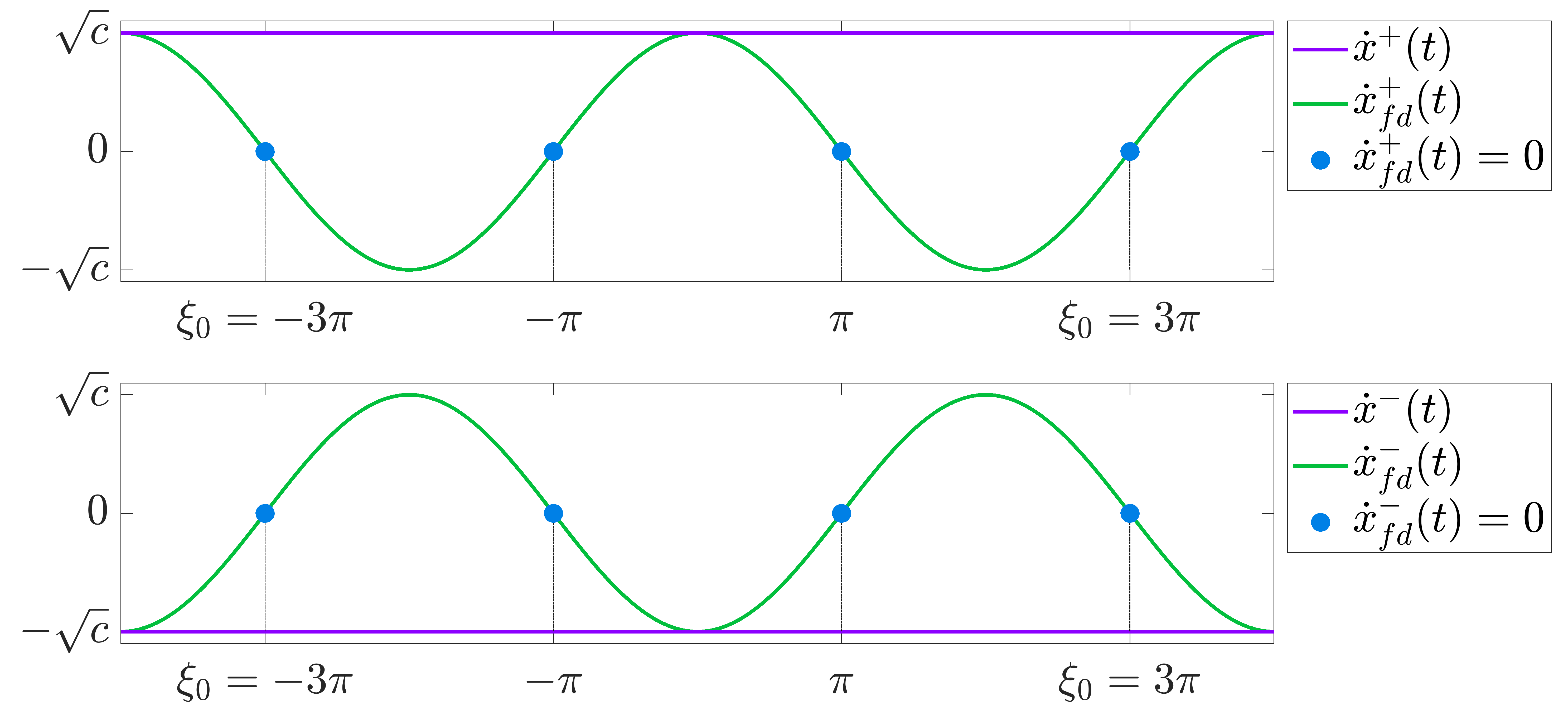}
	\caption{Velocity of propagation of $x^\pm(t)$ and $x^\pm_{fd}(t)$ for $\xi_0\in[-4\pi,4\pi]$ in space dimension $d=1$. The blue dots indicate the values of $\xi_0$ for which $\dot x^\pm_{fd}(t)=0$.}\label{fig:rayVel}
\end{figure}

This possibility of a zero velocity of propagation generates unexpected dynamical behaviors and spurious high-frequency solutions of \eqref{eq:mainEqFDintro}, that are not observed at the continuous level and shall be duly taken into account when addressing the construction of GB.

\section{Gaussian Beams for the continuous wave equation}\label{sec:GBcontinuous}

In this section, we give an abridged presentation of the construction of GB solutions for the wave equation \eqref{eq:mainEqR}. This construction being nowadays very classical, in what follows we shall only recall its main steps. Complete details can be found, e.g., in \cite{macia2002lack,ralston1982gaussian}.

\subsection{The GB ansatz} 
Given a ray $\bx(t)$ as in \eqref{eq:ray}, our objective is to generate approximate solutions of equation \eqref{eq:mainEqR} with energy
\begin{align}\label{eq:energy}
	E_c(u(\cdot,t)) = \frac 12 \int_{\RR^d} \Big[|u_t(\bx,t)|^2 + c|\nabla u(\bx,t)|^2\Big]\,d\bx,
\end{align}
concentrated on $\bx(t)$ for every $t\in(0,T)$. These solutions will have the structure 
\begin{align}\label{eq:ansatz}
	u^k(\bx,t) = k^{\frac d4-1}a(\bx,t)e^{ik\phi(\bx,t)}, \quad k\in\NN^\star = \NN\setminus\{0\},
\end{align}
with an \textbf{amplitude function} $a$ given by 
\begin{align*}
	a(\bx,t):= e^{-|\bx-\bx(t)|^2}
\end{align*}
and a \textbf{phase function} $\phi$ of the form
\begin{align}\label{eq:phi}
	\phi(\bx,t) = \bxi_0\cdot(\bx-\bx(t)) + \frac 12(\bx-\bx(t))\cdot\Big[M_0(\bx-\bx(t))\Big],
\end{align}
where $M_0\in\CC^{d\times d}$ is a $d\times d$ complex symmetric matrix \textbf{with strictly positive imaginary part} to be determined. Let us stress that taking $M_0$ with strictly positive imaginary part is fundamental for the construction of GB. In fact, if we replace \eqref{eq:phi} in \eqref{eq:ansatz}, we can easily see that
\begin{align*}
	u^k(\bx,t) = k^{\frac d4-1}a(\bx,t)e^{-\frac k2 (\bx-\bx(t))\cdot\big[\Im(M_0)(\bx-\bx(t))\big]} e^{ik\bxi_0\cdot(\bx-\bx(t))}e^{\frac{ik}{2}(\bx-\bx(t))\cdot\big[\Re(M_0)(\bx-\bx(t))\big]},
\end{align*}
so that 
\begin{align*}
	|u^k(\bx,t)|^2 = k^{\frac d2-2}|a(\bx,t)|^2 e^{-k(\bx-\bx(t))\cdot\big[\Im(M_0)(\bx-x(t))\big]}
\end{align*}
and $\Im(M_0) > 0$ implies that $u^k$ is essentially a Gaussian profile translated along $\bx(t)$.

The main result that we recall in this section is nowadays classical (see \cite{macia2002lack,ralston1982gaussian}), and establishes the existence of functions of the form \eqref{eq:ansatz}-\eqref{eq:phi} that are approximate solutions of \eqref{eq:mainEqR}. 

\begin{theorem}\label{thm:ApproxSol}
Let $M_0\in\CC^{d\times d}$ with $\Im(M_0)>0$, $\BF{0}\neq\bxi_0\in\RR^d$ and $0<c\in\RR$. Let $\square_c:=\partial_t^2 - c\Delta$ denote the standard D'Alambert operator, and let $\bx(t)$ be a ray for $\square_c$ given by \eqref{eq:ray}. Given any $0<k\in\RR$, define 
\begin{align}\label{eq:ansatzConstThm}
	u^k(\bx,t):= k^{\frac d4-1}a(\bx,t)e^{ik\phi(\bx,t)},
\end{align}
with
\begin{align}\label{eq:aThm}
	a(\bx,t):= e^{-|\bx-\bx(t)|^2}
\end{align}
and 
\begin{align}\label{eq:phiThm}
	\phi(\bx,t):= \bxi_0(\bx-\bx(t)) + \frac 12(\bx-\bx(t))\cdot\Big[M_0(\bx-\bx(t))\Big].
\end{align}
Then, the following facts hold:
\begin{itemize}
	\item[1.] $u^k$ is an approximate solution of the wave equation \eqref{eq:mainEqR}:
	\begin{align}\label{eq:approxSol}
		\sup_{t\in(0,T)}\norm{\square_c u^k(\cdot,t)}{L^2(\RR^d)}\leq \C k^{-\frac 12}
	\end{align}
	for some constant $\C = \C(a,\phi)>0$ not depending on $k$.
	\item[2.] The energy of $u^k$ is of the order of a positive constant when $k\to +\infty$: more precisely, for $t\in(0,T)$ we have
	\begin{align}\label{eq:approxEnergy}
		\lim_{k\to +\infty} E_c(u^k(\cdot,t)) = \C\Big(d,\bxi_0,M_0\Big)\left(\frac{\pi}{\text{det}\big(\Im (M_0)\big)}\right)^{\frac d2}.
	\end{align}
	\item[3.] The energy of $u^k$ is exponentially small off $\bx(t)$ as $k\to +\infty$:
	\begin{align}\label{eq:approxRay}
		\sup_{t\in(0,T)}\int_{\RR^d\setminus B_k(t)} \Big(|u^k_t(\cdot,t)|^2 + c|\nabla u^k(\cdot,t)|^2\Big)\,d\bx \leq \C(a,\phi,d,M_0) e^{-\frac 12\text{det}\big(\Im (M_0)\big)k^{\frac 12}}.
	\end{align}
	Here $B_k(t)$ denotes the $d$-dimensional ball centered at $\bx(t)$ of radius $k^{-1/4}$ and $\C(a,\phi,d,M_0)>0$ is a positive constant not depending on $k$. 
\end{itemize}
\end{theorem}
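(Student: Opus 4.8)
The plan is to dispatch the three claims in turn through a single mechanism: apply $\square_c$, respectively the energy density, to the ansatz \eqref{eq:ansatzConstThm}, expand in powers of the large parameter $k$, and exploit that the factor $e^{ik\phi}$ forces a Gaussian concentration on the tube $|\bx-\bx(t)|\lesssim k^{-1/2}$. Throughout I write $\BF{y}:=\bx-\bx(t)$ and note that $\Im\phi=\tfrac12\BF{y}\cdot\Im(M_0)\BF{y}$, so that $|e^{ik\phi}|^2=e^{-k\BF{y}\cdot\Im(M_0)\BF{y}}$ is a Gaussian of width $k^{-1/2}$. For the first claim I would let the derivatives in $\square_c u^k$ fall on $a$ and on $e^{ik\phi}$ and collect the outcome as a quadratic polynomial in $k$,
\[
\square_c u^k = k^{\frac d4-1}e^{ik\phi}\Big[-k^2\big(\phi_t^2-c|\nabla\phi|^2\big)a + ik\,\mathcal T a + \square_c a\Big],
\]
where $\mathcal T a := 2(\phi_t a_t - c\,\nabla\phi\cdot\nabla a) + (\phi_{tt}-c\Delta\phi)a$ is the transport term. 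The decisive point is the order of vanishing of the two singular coefficients along the ray: the eikonal symbol $\phi_t^2-c|\nabla\phi|^2$ should be $O(|\BF{y}|^3)$ and the transport term $\mathcal T a$ should be $O(|\BF{y}|)$. Since $c$ is constant and the ray is straight, the linear part $\bxi_0\cdot\BF{y}\mp\sqrt c\,|\bxi_0|\,t$ of $\phi$ already solves the eikonal equation identically, so the whole residual originates in the quadratic correction $\tfrac12\BF{y}\cdot M_0\BF{y}$; I would expand $\phi_t^2-c|\nabla\phi|^2$ as an explicit quadratic form in $\BF{y}$ and $\mathcal T a$ as an explicit Gaussian-weighted polynomial, and verify these order conditions.

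The residual bound then follows by the Gaussian rescaling $\BF{y}=k^{-1/2}\BF{z}$ in
\[
\norm{\square_c u^k(\cdot,t)}{L^2(\RR^d)}^2 = k^{\frac d2-2}\int_{\RR^d}\big|-k^2(\phi_t^2-c|\nabla\phi|^2)a + ik\,\mathcal T a + \square_c a\big|^2\, e^{-k\BF{y}\cdot\Im(M_0)\BF{y}}\,d\bx.
\]
Under this change of variables each power of $\BF{y}$ yields a factor $k^{-1/2}$, the volume element yields $k^{-d/2}$, and the weight becomes the fixed Gaussian $e^{-\BF{z}\cdot\Im(M_0)\BF{z}}$. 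Bookkeeping the powers, a cubic eikonal remainder contributes $k^2\cdot k^{-3/2}$ and a linear transport remainder contributes $k\cdot k^{-1/2}$, both of size $k^{1/2}$; squaring, integrating, and multiplying by the prefactor $k^{\frac d2-2}k^{-\frac d2}=k^{-2}$ leaves $O(k^{-1})$, i.e. $\norm{\square_c u^k}{L^2}=O(k^{-1/2})$ uniformly in $t\in(0,T)$. I expect this step — pinning down that the eikonal vanishes to third and the transport to first order, to the order dictated by the matrix Riccati and amplitude-transport equations of the classical theory — to be the main obstacle; the remaining two claims are essentially Gaussian-integral computations.

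For the second claim I would compute the energy density keeping only the terms in which both derivatives hit the phase, since these carry the top power $k^2$: to leading order
\[
|u^k_t|^2 + c|\nabla u^k|^2 = k^{\frac d2}\big(|\phi_t|^2 + c|\nabla\phi|^2\big)|a|^2\, e^{-k\BF{y}\cdot\Im(M_0)\BF{y}}\big(1+O(k^{-1})\big),
\]
the cross terms being of lower order. Evaluating $|\phi_t|^2+c|\nabla\phi|^2=2c|\bxi_0|^2$ on the ray, using $|a|^2\to 1$ at the beam scale, and applying the Gaussian integral $\int_{\RR^d} e^{-k\BF{y}\cdot\Im(M_0)\BF{y}}\,d\bx = \pi^{\frac d2} k^{-\frac d2}\big(\text{det}(\Im(M_0))\big)^{-\frac12}$ makes the factor $k^{\frac d2}$ cancel, producing the finite limit displayed in \eqref{eq:approxEnergy}; the constant $\C(d,\bxi_0,M_0)$ is read off from this Gaussian integral, and the $o(1)$ corrections come from expanding $\phi_t,\nabla\phi,a$ around $\BF{y}=0$.

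For the third claim I would bound the same energy density on the exterior $\RR^d\setminus B_k(t)$ by its Gaussian tail. Splitting the weight as $e^{-\frac12 k\BF{y}\cdot\Im(M_0)\BF{y}}\cdot e^{-\frac12 k\BF{y}\cdot\Im(M_0)\BF{y}}$ and using that on $|\BF{y}|\geq k^{-1/4}$ one has $k\,\BF{y}\cdot\Im(M_0)\BF{y}\geq c_0 k^{1/2}$ for a positive constant $c_0$ controlled by $\text{det}(\Im(M_0))$, the first factor extracts $e^{-\frac12 c_0 k^{1/2}}$ while the second keeps the remaining polynomial-times-Gaussian integrable and $O(1)$ after rescaling; this yields the exponential bound \eqref{eq:approxRay} uniformly in $t$. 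Finally I would note that in dimension $d=1$ the first claim is trivial: there the eikonal and transport remainders vanish identically, so $u^k$ reduces to an exact travelling-wave solution and $\square_c u^k\equiv 0$.
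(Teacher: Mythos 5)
Your plan follows the paper's proof essentially line by line: your bracket $[-k^2(\phi_t^2-c|\nabla\phi|^2)a+ik\,\mathcal Ta+\square_ca]$ is the paper's decomposition into $r_2$, $r_1$, $r_0$; your vanishing-order conditions (eikonal to $O(|\BF{y}|^3)$, transport to $O(|\BF{y}|)$) are exactly the conditions the paper imposes; your rescaling $\BF{y}=k^{-1/2}\BF{z}$ is an equivalent form of the paper's Proposition \ref{prop:Integral}; and your treatments of claims 2 and 3 (leading Gaussian integral, respectively Gaussian-tail splitting) coincide with the paper's Steps 2 and 3. The genuine gap is that you never carry out the one step you yourself flag as ``the main obstacle'': the verification of the two vanishing orders for the specific phase \eqref{eq:phiThm} with \emph{constant} $M_0$. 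Everything else in your proposal is routine Gaussian bookkeeping, so deferring this verification leaves the proof without its only substantive ingredient.

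Worse, the deferred step is not just tedious — it fails in dimension $d\geq 2$, so your plan cannot be completed as written. With $\hat\bxi_0:=\bxi_0/|\bxi_0|$ and $\dot\bx=\pm\sqrt c\,\hat\bxi_0$, the quadratic term in $\BF{y}$ of $\phi_t^2-c\,\nabla\phi\cdot\nabla\phi$ is
\begin{align*}
	\big(\dot\bx\cdot M_0\BF{y}\big)^2-c\,(M_0\BF{y})\cdot(M_0\BF{y})
	= -c\,\BF{y}^\top M_0\big(I_d-\hat\bxi_0\hat\bxi_0^\top\big)M_0\BF{y},
\end{align*}
which vanishes for all real $\BF{y}$ only if $M_0\big(I_d-\hat\bxi_0\hat\bxi_0^\top\big)M_0=0$; since $\Im(M_0)>0$ makes $M_0$ invertible, this forces $d=1$. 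Similarly, $\square_c\phi(\bx(t),t)=c\big(\hat\bxi_0^\top M_0\hat\bxi_0-\mathrm{tr}(M_0)\big)$, which is nonzero for generic admissible $M_0$ when $d\geq2$, so even your transport term $\mathcal Ta$ fails to vanish at order zero on the ray. Plugging these weaker orders into your own bookkeeping yields $\norm{\square_c u^k(\cdot,t)}{L^2(\RR^d)}=\mathcal O(1)$ rather than $\mathcal O(k^{-1/2})$. This is precisely the Riccati obstruction you allude to: a constant $M_0$ solves the matrix Riccati equation of the classical theory only in $d=1$; in higher dimensions $M(t)$ must evolve, as in Remark \ref{rem:GBvaraible} and in the paper's own multi-dimensional finite-difference construction. (The paper's appendix masks this point by writing $\nabla\phi_t\otimes\nabla\phi_t$ as $M_0^2|\dot\bx|^2$, i.e.\ replacing the rank-one matrix $\dot\bx\otimes\dot\bx$ by $|\dot\bx|^2I_d$, an identification valid only in one dimension.) Your closing observation, on the other hand, is correct and sharp: in $d=1$ all three residuals vanish identically, $u^k$ being an exact travelling wave, so there claim 1 holds trivially; a complete proof along your lines should either restrict to $d=1$ or upgrade $M_0$ to the Riccati solution $M(t)$.
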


\begin{remark}[High-order Gaussian Beams]\label{rem:GBhigh}
As shown e.g. in \cite{weiss1985reflection}, it is possible to find correcting terms $\tilde{\phi}, a_1,a_2,\ldots,a_N$ and a cut-off function $\chi\in C_0^\infty(\RR^d\times\RR)$ identically equal to one in a neighborhood of the ray $\bx(t)$ such that the function
\begin{align}\label{eq:GBhigh}
	\tilde u^k_N(\bx,t) = k^{\frac d4-1}\chi(\bx,t)\left(a(\bx,t)+\sum_{j=1}^N k^{-j}a_j(\bx,t)\right)e^{ik\left(\phi(\bx,t)+\tilde\phi(\bx,t)\right)}
\end{align}
still satisfies the conclusions of Theorem \ref{thm:ApproxSol} and moreover
\begin{align}\label{eq:approxSol_N}
	\sup_{t\in(0,T)}\norm{\square_c u^k_N(\cdot,t)}{L^2(\RR^d)}\leq \C k^{-\frac 12-N}.
\end{align}
We stress that the introduction of a cut-off function in \eqref{eq:GBhigh} is necessary to avoid spurious growth away from the center ray. Besides, we see from \eqref{eq:approxSol_N} that the approximation rate of $u^k_N$ is improved by a factor $k^{-N}$. Actually, with this procedure, we can build quasi-solutions of \eqref{eq:mainEqR} approximating the real solution up to an arbitrary order. These quasi-solutions of \eqref{eq:mainEqR} are usually called $N$\textbf{-th order Gaussian Beams}. Assuming this terminology, the ansatz \eqref{eq:ansatz} will then define a $0$-th order Gaussian Beam.
\end{remark}

\begin{remark}[Variable-coefficients wave equation]\label{rem:GBvaraible}
When considering a variable-coefficients wave equation, i.e. when taking $c = c(x) \in C^\infty(\RR)$, the GB construction of Theorem \ref{thm:ApproxSol} still applies. Nevertheless, some small changes need to be introduced in the phase function. In particular, $\phi$ has to be chosen in the form 
\begin{align*}	
	\phi(\bx,t):= \bxi_0(\bx-\bx(t)) + \frac 12(\bx-\bx(t))\cdot\Big[M(t)(\bx-\bx(t))\Big],
\end{align*}
with $M(t)\in\CC^{d\times d}$ solution of the nonlinear ODE
\begin{align}\label{eq:Riccati}
	\begin{cases}
		\dot M(t) = M(t)C(t)M(t) + B(t)M(t) + M(t)B^\top(t) + A(t), & t\in (0,T)
		\\
		M(0) = M_0
	\end{cases}
\end{align}
and where $A(t)$, $B(t)$ and $C(t)$ are $d\times d$ matrices whose coefficients depend on the first and second derivatives of the principal symbol $\mathcal P$ evaluated along the characteristics. This is a Riccati equation and it can be shown (\cite{babich1999higher,ralston1982gaussian}) that, given a symmetric matrix $M_0\in\CC^{d\times d}$ with $\Im(M_0) > 0$, there exist a global solution $M(t)$ of \eqref{eq:Riccati} that satisfies $M(0) = M_0$, $M(t) = M(t)^\top$ and $\Im(M(t)) > 0$ for all $t$.
\end{remark}

We are postponing the proof of Theorem \ref{thm:ApproxSol} to Appendix \ref{appendixCont}. Here we shall just highlight the main ingredients for the explicit construction of the ansatz \eqref{eq:ansatzConstThm}-\eqref{eq:aThm}-\eqref{eq:phiThm}, that shall be later adapted to the finite difference setting. 

\subsection{Asymptotic expansion and explicit construction of the GB ansatz}
We start by substituting the function $u^k$ into \eqref{eq:mainEqR} and, after having gathered the terms with equal power of $k$, we get
\begin{align}\label{eq:ansatz2}
	\square_c u^k =&\, k^{\frac d4-1}e^{ik\phi}\square_c a \notag 
	\\
	&+ k^{\frac d4}e^{ik\phi} i\Big(a\square_c\phi + 2 a_t\phi_t - 2c \nabla a\cdot\nabla\phi\Big) 
	\\
	&+ k^{\frac d4+1}e^{ik\phi} \Big(c\,|\nabla\phi|^2-\phi_t^2\Big)a. \notag 
\end{align}
Let us write the expression \eqref{eq:ansatz2} as 
\begin{align}\label{eq:ansatz3}
	\square_c u^k = k^{\frac d4-1}e^{ik\phi}r_0 + ik^{\frac d4}e^{ik\phi} r_1 + k^{\frac d4+1}e^{ik\phi} r_2, 
\end{align}
where we have denoted 
\begin{subequations}
	\begin{align}
		&r_0:= \square_c a \label{eq:r0}
		\\
		&r_1:= a\square_c\phi + 2 a_t\phi_t - 2c \nabla a\cdot\nabla\phi \label{eq:r1}
		\\
		&r_2:= \Big(c\,|\nabla\phi|^2-\phi_t^2\Big)a \label{eq:r2}
	\end{align}
\end{subequations}

We are going to construct $a$ and $\phi$ in such a way that the terms of higher order in $k$, namely $r_1$ and $r_2$, vanish on $\bx(t)$ up to order $0$ and $2$, respectively.

\subsubsection{Analysis of the $r_2$ term: computation of the phase $\phi$.} We want to construct $\phi$ such that 
\begin{align}\label{eq:r2zero}
	D_\bx^\alpha r_2(\bx(t),t) = 0 \text{ for all } t\in\RR \text{ and } \alpha\in\NN^d\text{ with }|\alpha|\in\{0,1,2\}. 
\end{align}
with $r_2$ given by \eqref{eq:r2}. For this, it is enough to solve the eikonal equation
\begin{align}\label{eq:eikonal}
	c|\nabla\phi|^2 - \phi_t^2 = 0
\end{align}
up to order $2$ on $(\bx(t),t)$. Next we show that this can be done if $\phi$ is of the form \eqref{eq:phiThm}. To this end, let us first notice that, from the definition \eqref{eq:phiThm} of the phase $\phi$ we get 
\begin{align*}
	\nabla\phi(\bx(t),t) &= \Big[\bxi_0 + M_0(\bx-\bx(t))\Big]\bigg|_{\bx = \bx(t)} = \bxi_0 
	\\
	\phi_t(\bx(t),t) &= \bigg[- \bxi_0\cdot\dot \bx(t) - \dot \bx(t)\cdot\Big[M_0(\bx-\bx(t))\Big]\bigg]\Bigg|_{\bx = \bx(t)} = -\bxi_0\cdot\dot \bx(t). 
\end{align*}
Plugging this into \eqref{eq:eikonal}, and using \eqref{eq:ray_prel} and \eqref{eq:tau0cond}, we then obtain that
\begin{align*}
	c|\nabla\phi(\bx(t),t)|^2 - \phi_t^2(\bx(t),t) = c|\bxi_0|^2 - (\bxi_0\cdot\dot \bx(t))^2 = c|\bxi_0|^2 -\frac{c^2|\bxi_0|^4}{\tau_0^2} = c|\bxi_0|^2 -\frac{c^2|\bxi_0|^4}{c|\bxi_0|^2} = 0.
\end{align*}
Secondly, we have from \eqref{eq:eikonal} that
\begin{align}\label{eq:Rder}
	\nabla\Big(c|\nabla\phi|^2 - \phi_t^2\Big) = 2\Big(cH(\phi)\nabla\phi - \phi_t\nabla\phi_t\Big),
\end{align}
where $H(\phi)$ denotes the Hessian matrix of $\phi$. Moreover, 
\begin{align*}
	H(\phi(\bx(t),t)) = M_0 \quad\text{ and }\quad \nabla\phi_t(\bx(t),t) = -M_0\dot \bx(t). 
\end{align*}
Hence, we get from \eqref{eq:Rder}, \eqref{eq:ray_prel} and \eqref{eq:tau0cond} that
\begin{align*}
	\nabla\Big(c|\nabla\phi(\bx(t),t)|^2 - \phi_t^2(\bx(t),t)\Big) = 2cM_0\Big(\bxi_0 - \big(\bxi_0\cdot\dot\bx(t)\big)\dot\bx(t)\Big) = 2cM_0\bxi_0\left(1-\frac{c^2|\bxi_0|^2}{\tau_0^2}\right) = 0.
\end{align*}
Finally, taking into account that $D_\bx^3\phi = 0$, we can compute
\begin{align}\label{eq:Rder2}
	H\Big(c|\nabla\phi|^2 - \phi_t^2\Big) = 2\Big(cH(\phi)^2 - \phi_tH(\phi_t)-\nabla\phi_t\otimes\nabla\phi_t\Big),
\end{align}
with 
\begin{align*}
	\nabla\phi_t\otimes\nabla\phi_t := 
		\begin{pmatrix} 
		\phi_{x_1,t}^2 & \phi_{x_1,t}\phi_{x_2,t} & \ldots & \phi_{x_1,t}\phi_{x_d,t} 
		\\[5pt]
		\phi_{x_1,t}\phi_{x_2,t} & \phi_{x_2,t}^2 & \ldots & \phi_{x_2,t}\phi_{x_d,t} 
		\\[5pt]
		\vdots & \vdots & & \vdots 
		\\[5pt]
		\phi_{x_d,t}\phi_{x_2,t} & \phi_{x_d,t}\phi_{x_2,t} & \ldots & \phi_{x_d,t}^2
		\end{pmatrix}\in\RR^{d\times d}.
\end{align*}
Hence, since $H(\phi_t)(\bx(t),t) = 0$, we obtain from \eqref{eq:Rder2} that
\begin{align*}
	H\Big(c|\nabla\phi(\bx(t),t)|^2 - \phi_t^2(\bx(t),t)\Big) =  2M_0^2\Big(c - |\dot\bx(t)|^2\Big) = 0.
\end{align*}
Therefore, with our choice \eqref{eq:phiThm} of the phase function $\phi$ \eqref{eq:r2zero} is satisfied.  

\subsubsection{Analysis of the $r_1$ term: computation of the amplitude $a$.} To complete the construction of our ansatz, we now have to determine a suitable amplitude $a$. To this end, we shall start by computing $a$ on the bi-characteristic rays, which is done by asking that $r_1$ in \eqref{eq:r1} vanishes on $(\bx(t),t)$, that is, 
\begin{align}\label{eq:r1char}
	2c \nabla a(\bx(t),t)\cdot\nabla\phi(\bx(t),t) - 2a_t(\bx(t),t)\phi_t(\bx(t),t) - a(\bx(t),t)\square_c \phi(\bx(t),t) = 0.
\end{align}

On the other hand, we can readily check from the definition \eqref{eq:phiThm} that the D'Alambertian of the phase $\phi$ vanishes on the characteristics, that is,
\begin{align*}
	\square_c\phi(\bx(t),t) = 0.
\end{align*}
In view of this, \eqref{eq:r1char} simply becomes
\begin{align}\label{eq:r1char1}
	2c \nabla a(\bx(t),t)\cdot\nabla\phi(\bx(t),t) - 2a_t(\bx(t),t)\phi_t(\bx(t),t) = 0.
\end{align}

Substituting $\nabla\phi$ and $\phi_t$, and evaluating on the ray $(\bx(t),t)$ using the fact that $c = |\dot \bx(t)|^2=\bx(t)\cdot\bx(t)$, we obtain that  
\begin{equation}\label{eq:r1char2}
	\begin{array}{ll}
		\displaystyle 2c \nabla a(\bx(t),t)\cdot\nabla\phi(\bx(t),t) - 2a_t(\bx(t),t)\phi_t(\bx(t),t) &= \displaystyle 2\Big(c\bxi_0\cdot\nabla a(\bx(t),t) + \big(\bxi_0\cdot\dot\bx(t)\big) a_t(\bx(t),t)\Big)
		\\[7pt]
		&= \displaystyle 2\bxi_0\cdot\Big(c\nabla a(\bx(t),t) + \dot \bx(t) a_t(\bx(t),t)\Big)
		\\[7pt]
		&= \displaystyle 2\big(\bxi_0\cdot\dot \bx(t)\big)\Big(\dot \bx(t)\cdot\nabla a(\bx(t),t) + a_t(\bx(t),t)\Big)
		\\[7pt]
		&= \displaystyle -2\tau_0\frac{d}{dt}a(\bx(t),t),
	\end{array} 
\end{equation}
where we have used \eqref{eq:tau0cond} and \eqref{eq:ray} to compute
\begin{align*}
	2\bxi_0\cdot\dot \bx(t) = -\frac{2c|\bxi_0|^2}{\tau_0} = -\frac{2\tau_0^2}{\tau_0} = -2\tau_0.
\end{align*}
Hence, we obtain from \eqref{eq:r1char1} and \eqref{eq:r1char2} that $a(\bx(t),t)$ is determined by solving the equation
\begin{align}\label{eq:a}
	\frac{d}{dt} a(\bx(t),t) = 0.
\end{align}
i.e. 
\begin{align*}
	a(\bx(t),t) = a(\bx_0,0) \quad\text{ for all } t\in [0,T].
\end{align*}
In what follows, for simplicity, we will take $a(\bx_0,0)=1$, so that 
\begin{align}\label{eq:aConstRay}
	a(\bx(t),t) = 1 \quad\text{ for all } t\in [0,T].
\end{align}

Notice that we have many possible choices of a function $a(\bx,t)$ satisfying both \eqref{eq:a} and \eqref{eq:aConstRay}. Here, we will take 
\begin{align*}
	a(\bx,t) = e^{-|\bx-\bx(t)|^2},
\end{align*}
so that \eqref{eq:ansatzConstThm} is really a Gaussian profile propagating along the characteristic $\bx(t)$.

\section{Finite-difference approximation}\label{sec:GBdiscrete}

In this section, we adapt the continuous construction of GB described in Section \ref{sec:GBcontinuous} to the finite difference wave equation \eqref{eq:mainEqFD}. Our aim is to provide a GB ansatz yielding to approximate solutions of \eqref{eq:mainEqFD} concentrated on the rays $\bx_{fd}^\pm(t)$ in \eqref{eq:rayFD}, generated by the finite difference principal symbol \eqref{eq:PSFD}. As we shall see, two main difficulties raise when attempting this construction:
\begin{itemize}
	\item[1.] The discrete operators that we shall employ depend on the mesh size $h$. Because of that, we will need to limit the range of the high-frequency parameter $k$ in the GB ansatz according to $h$. We will see that the correct scale is $k = h^{-1}$.
	
	\item[2.] As mentioned before, the finite difference equation \eqref{eq:mainEqFD} admits some spurious solution with zero velocity of propagation. This shall be taken duly into account when constructing the ansatz. 
\end{itemize} 

\subsection{Numerical scheme}

Let us start by introducing in more detail the numerical scheme we shall employ. Given a mesh size $h>0$, we consider an uniform grid on the whole $\RR^d$
\begin{align*}
	\mathcal G^h:= \Big\{\bx_\bj := \bj h,\,\bj\in\ZZ^d\,\Big\}.
\end{align*}

Moreover, for a function $f:\RR^d\to\RR$, we denote $f_\bj:=f(\bx_\bj)$ its evaluation on the grid points, and we define the following finite difference operators: 
\begin{subequations}
	\begin{align}
		&\displaystyle\nabla_h^+ f_\bj: = \big(\partial_{h,i}^+\,f_\bj\big)_{i=1}^d \quad\text{ with }\quad \partial_{h,i}^+\,f_\bj:= \frac 1h \Big(f_{\bj+\BF{e}_i}-f_\bj\Big) & & \text{forward difference} \label{eq:FDoperators1d}
		\\[10pt]
		&\displaystyle\nabla_h^- f_\bj: = \big(\partial_{h,i}^-\,f_\bj\big)_{i=1}^d \quad\text{ with }\quad \partial_{h,i}^-\,f_\bj:= \frac 1h \Big(f_\bj-f_{\bj-\BF{e}_i}\Big) & &\text{backward difference} \label{eq:FDoperators2d}
		\\[10pt]
		&\displaystyle\nabla_h f_\bj: = \big(\partial_{h,i}\,f_\bj\big)_{i=1}^d \quad\;\text{ with }\quad \partial_{h,i}\,f_\bj:= \frac{1}{2h} \Big(f_{\bj+\BF{e}_i}-f_{\bj-\BF{e}_i}\Big) & &\text{centered difference} \label{eq:FDoperators3d}
		\\[10pt]
		&\displaystyle\Dhc f_\bj: = \frac{c}{h^2} \sum_{i=1}^d\Big(f_{\bj+\BF{e}_i} - 2f_\bj + f_{\bj-\BF{e}_i}\Big) & &\text{finite difference Laplacian}, \label{eq:FDoperators6d}
	\end{align}
\end{subequations}
with $(\BF{e}_i)_{i=1}^d$ denoting the canonical basis in $\RR^d$. 

With the notations just introduced, we consider the following semi-discrete finite difference wave equation on $\mathcal G^h$
\begin{align}\label{eq:mainEqFD}
	\begin{cases}
		\square_{c,h} u_\bj(t) = 0, & \bj\in\ZZ^d\;\;\;t\in(0,T)
		\\
		u_\bj(0) = u_\bj^0, \;\;\; u_\bj'(0) = u_\bj^1, & \bj\in\ZZ^d
	\end{cases}
\end{align}
where, for simplicity of notation, we have denoted
\begin{align*}
	\square_{c,h}:=\partial_t^2 - \Dhc
\end{align*}
the discrete D'Alambertian operator. Moreover, we define the semi-discrete energy associated with the solutions of \eqref{eq:mainEqFD} as
\begin{align}\label{eq:energyFD}
	\mathcal E_h[u](t):=\frac 12 \left(\norm{\partial_t u(t)}{\ell^2(h\ZZ^d)}^2 + c\norm{u(t)}{\acc{h}^1(h\ZZ^d)}^2\right),
\end{align}
where $\ell^2(h\ZZ^d)$ and $\acc{h}^1(h\ZZ^d)$ are discrete Lebesgue and Sobolev spaces on the mesh $\mathcal G^h$ defined as
\begin{align}\label{eq:discreteSob}
	&\displaystyle \ell^2(h\ZZ^d) := \left\{u\;\text{ s.t. }\norm{u}{\ell^2(h\ZZ^d)}:= \left(h^d\sum_{\bj\in\ZZ^d} |u_\bj|^2\right)^{\frac 12} <+\infty \right\}
	\\
	&\displaystyle \acc{h}^1(h\ZZ^d) := \left\{u\;\text{ s.t. }\norm{u}{\acc{h}^1(h\ZZ^d)}:= \left(\sum_{i=1}^d\norm{\partial_{h,i}^+\, u_\bj}{\ell^2(h\ZZ^d)}^2\right)^{\frac 12} = \left(h^d\sum_{i=1}^d\sum_{\bj\in\ZZ^d}|\partial_{h,i}^+\, u_\bj|^2\right)^{\frac 12} <+\infty \right\}. \notag
\end{align}

As illustrated numerically in \cite{biccari2020propagation}, the semi-discrete wave equation \eqref{eq:mainEqFD} admits highly concentrated and oscillating solutions that propagate along the characteristics $\bx_{fd}(t)$ given by \eqref{eq:rayFD}. The aim of this section is to justify these numerical observation through the definition of a GB ansatz.

In what follows, for the sake of simplicity, we will first consider the one-dimensional case $d=1$, in which we will give complete detail of the GB construction. In a second moment, we will comment about the extension of this construction to the general multi-dimensional case. 

\subsection{One-dimensional semi-discrete GB ansatz} We start by introducing the one-dimensional version of the finite difference operators we defined in \eqref{eq:FDoperators1d}, \eqref{eq:FDoperators2d}, \eqref{eq:FDoperators3d} and \eqref{eq:FDoperators6d}:
\begin{subequations}
	\begin{align}
		&\displaystyle \fwd f_j:= \frac 1h \Big(f_{j+1}-f_j\Big) \quad\text{ for all }j\in\ZZ & & \text{forward difference} \label{eq:FDoperators1}
		\\[10pt]
		&\displaystyle \bwd f_j:= \frac 1h \Big(f_j-f_{j-1}\Big) \quad\text{ for all }j\in\ZZ & &\text{backward difference} \label{eq:FDoperators2}
		\\[10pt]
		&\displaystyle \Dh f_j:= \frac{1}{2h} \Big(f_{j+1}-f_{j-1}\Big) \quad\text{ for all }j\in\ZZ & &\text{centered difference} \label{eq:FDoperators3}
		\\[10pt]
		&\displaystyle\Dhc f_j: = \frac{c}{h^2} \Big(f_{j+1} - 2f_j + f_{j-1}\Big) \quad\text{ for all }j\in\ZZ & &\text{finite difference Laplacian}, \label{eq:FDoperators6}
	\end{align}
\end{subequations}

These operators fulfill some useful properties, that can be easily shown through the definitions: for all $j\in\ZZ$, we have
\begin{subequations}\label{eq:FDcomposition}
	\begin{align}
		&\big(\fwd + \bwd\big)f_j = 2\Dh f_j \label{eq:FDcomposition4}
		\\[8pt]
		&\big(\fwd - \bwd\big)f_j = \frac hc\Dhc f_j \label{eq:FDcomposition5}
		\\[7pt]
		&\Dhc (fg)_j = f_j\Dhc g_j + g_j\Dhc f_j + c\Big(\fwd f_j\fwd g_j + \bwd f_j\bwd g_j\Big) \label{eq:FDproduct4}
	\end{align}
\end{subequations}

Finally, let us state the main result of the present paper, whose proof will be provided in the next section and whose validation will be given 
in Section \ref{sec:numerics} through sharp numerical simulations.

\begin{theorem}\label{thm:ApproxSolFD}
Let $M_0\in\CC$ with $\Re(M_0) = 0$ and $\Im(M_0)>0$, $0\neq\xi_0\in\RR$, $0<c\in\RR$ and $x_{fd}(t)$ be a ray for $\square_{c,h}$ given by \eqref{eq:rayFD}. Given $h\in(0,1)$, define 
\begin{align}\label{eq:ansatzFD_Thm}
	u^h_{fd}(x,t):= h^{\frac 34} A_j(x,t)e^{\frac ih\Phi_j(x,t)},
\end{align}
with
\begin{align}\label{eq:AThm}
	A(x,t):= e^{-(x-x_{fd}(t))^2} e^{\mp\frac 12 \ln\left(1-\frac{M_0\sqrt{c}}{2}\sin\left(\frac{\xi_0}{2}\right)t\right)}
\end{align}
and 
\begin{align}\label{eq:PhiThm}
	\Phi(x,t):= \pm\sqrt{c}\left(\xi_0\cos\left(\frac{\xi_0}{2}\right) - 2\sin\left(\frac{\xi_0}{2}\right)\right)t + \xi_0(x-x_{fd}(t)) + \frac{M_0}{2\mp M_0\sqrt{c}\sin\left(\frac{\xi_0}{2}\right)t}(x-x_{fd}(t))^2.
\end{align}
Then, the following facts hold:
\begin{itemize}
	\item[1.] The $u^h_{fd}$ are approximate solutions of the finite difference wave equation \eqref{eq:mainEqFD}:
	\begin{align}\label{eq:approxSolFD}
		\mathcal S_h[u_{fd}^h]:= \sup_{t\in(0,T)}\norm{\square_{c,h}u^h_{fd}(\cdot,t)}{\ell^2(h\ZZ)} = \mathcal O(h^{\frac 12}), \quad\text{ as } h\to 0^+.		
	\end{align}
	\item[2.] The energy of $u^h_{fd}$ satisfies
	\begin{align}\label{eq:approxEnergyFD}
		\mathcal E_h[u_{fd}^h](t) = \mathcal O(1), \quad\text{ as } h\to 0^+.
	\end{align}
	\item[3.] The energy of $u^h_{fd}$ is exponentially small off $x_{fd}(t)$ as $h\to 0^+$:
	\begin{align}\label{eq:approxRayFD}
		\sup_{t\in(0,T)} \frac h2\sum_{j\in\ZZ^\dagger(t)} \Big(|\partial_t u_{fd,j}^h|^2 + c|\fwd u_{fd,j}^h|^2\Big)\leq \C_1(A,\Phi) \Big(1 + h + h^2\Big) e^{-\C_2(M_0)h^{-\frac 12}}, 
	\end{align}
	where $\C_1(A,\Phi)>0$ and $\C_2(M_0)>0$ are two positive constants independent of $h$ and we have denoted 
	\begin{align*}
		\ZZ^\dagger(t) := \Big\{ j\in\ZZ\,:\, |x_j-x_{fd,j}(t)|>h^{\frac 14}\Big\}
	\end{align*}
\end{itemize}
\end{theorem}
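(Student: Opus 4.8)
The plan is to mirror, step by step, the continuous construction of Section \ref{sec:GBcontinuous}, the one essential new difficulty being that the spatial operator $\Dhc$ is nonlocal. First I would insert the ansatz \eqref{eq:ansatzFD_Thm} into $\square_{c,h}$. The temporal part $\partial_t^2$ is a genuine differential operator and is treated exactly as in \eqref{eq:ansatz2}. For the spatial part I would write $u^h_{j\pm1}(t)=h^{3/4}A(x_j\pm h,t)\,e^{\frac ih\Phi(x_j\pm h,t)}$ and Taylor-expand both $A$ and $\Phi$ around $x_j$. The decisive point is that in the exponent $\frac ih\Phi(x_j\pm h)=\frac ih\Phi(x_j)\pm i\Phi_x+\frac{ih}2\Phi_{xx}\pm\cdots$ the term $\pm i\Phi_x$ is $O(1)$, not $O(h)$: one must therefore factor out the exact oscillation $e^{\pm i\Phi_x}$ and expand only the genuinely small remainder. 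Carrying this out and collecting powers of $h$ rewrites $\square_{c,h}u^h_{fd}$ in a form analogous to \eqref{eq:ansatz3}, with an $h^{-2}$ (eikonal) term, an $h^{-1}$ (transport) term, and an $O(1)$ remainder, the analogs of the terms $r_2$ and $r_1$ of \eqref{eq:r1}--\eqref{eq:r2}; the leading symbol is $\tfrac1{h^2}A\bigl(4c\sin^2(\Phi_x/2)-\Phi_t^2\bigr)e^{\frac ih\Phi}$---i.e.\ exactly $\PP_{fd}$ of \eqref{eq:PSFD}.

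Next, the eikonal step. The $h^{-2}$ term vanishes to second order on the ray provided the discrete eikonal $4c\sin^2(\Phi_x/2)-\Phi_t^2=0$ holds up to order $2$ at $x=x_{fd}(t)$, the analog of \eqref{eq:r2zero}--\eqref{eq:eikonal}. At order $0$ this forces $\Phi_t(x_{fd}(t),t)=-\tau_0=\mp2\sqrt c\sin(\xi_0/2)$, which is why the linear-in-$t$ prefactor of \eqref{eq:PhiThm} is calibrated as $\pm\sqrt c(\xi_0\cos(\xi_0/2)-2\sin(\xi_0/2))t$; the order-$1$ identity is automatic, being the Hamilton equation for $\dot x_{fd}$ already encoded in \eqref{eq:rayFD}; and the order-$2$ identity reduces, after using $\partial_\xi^2\PP_{fd}=2c\cos\xi_0$ and $\dot x_{fd}^2=c\cos^2(\xi_0/2)$, to the scalar Riccati equation $\dot M=\tfrac{\sqrt c}2\sin(\xi_0/2)\,M^2$ with $M(0)=M_0$. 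Its Möbius solution $M(t)=M_0\bigl(1\mp\tfrac{M_0\sqrt c}2\sin(\xi_0/2)t\bigr)^{-1}$ furnishes the quadratic coefficient $\tfrac{M(t)}2$ in \eqref{eq:PhiThm}. I would emphasize here the genuine discrete phenomenon: unlike the continuous case, where $c-\dot x(t)^2=0$ forces $M$ to stay constant, here $c-\dot x_{fd}^2=c\sin^2(\xi_0/2)\neq0$, so $M$ really evolves; the hypothesis $\Re(M_0)=0$ guarantees that the Möbius map keeps $\Im M(t)>0$ for all $t$, which preserves the Gaussian localization and plays the role of the symplectic argument of Remark \ref{rem:GBvaraible}.

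Then the transport step. Vanishing of the $h^{-1}$ term on the ray gives the discrete transport equation $2A_t\Phi_t+A\Phi_{tt}-2cA_x\sin\Phi_x-cA\Phi_{xx}\cos\Phi_x=0$, the counterpart of \eqref{eq:r1char}. Evaluating on $x=x_{fd}(t)$, where $\Phi_x=\xi_0$, $\Phi_{xx}=M(t)$, $\Phi_{tt}=Mc\cos^2(\xi_0/2)$, and regrouping via $\cos^2(\xi_0/2)-\cos\xi_0=\sin^2(\xi_0/2)$ together with $A_t+\dot x_{fd}A_x=\tfrac{d}{dt}A(x_{fd}(t),t)$, this collapses to $\tfrac{d}{dt}A(x_{fd}(t),t)=\tfrac{\sqrt c}4\sin(\xi_0/2)\,M(t)\,A(x_{fd}(t),t)$, solved precisely by the factor $\bigl(1\mp\tfrac{M_0\sqrt c}2\sin(\xi_0/2)t\bigr)^{\mp1/2}$ of \eqref{eq:AThm}; multiplying by the $O(1)$-width Gaussian $e^{-(x-x_{fd}(t))^2}$ fixes $A$. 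The structural difference from the continuous case \eqref{eq:a} is that the discrete analog of $\square_c\phi(\bx(t),t)$ does not vanish on the ray---it equals $Mc\sin^2(\xi_0/2)$---so the on-ray amplitude is no longer constant.

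Finally, the three estimates. With the eikonal solved to order $2$ the $h^{-2}$ term is $O(|x-x_{fd}|^3)$ and with the transport solved to order $0$ the $h^{-1}$ term is $O(|x-x_{fd}|)$; together with the remaining $O(1)$ and higher-order-in-$h$ contributions from the expansion of $\Dhc$, this bounds $\square_{c,h}u^h_{fd}$ pointwise by a Gaussian-weighted polynomial in $(x_j-x_{fd})$. I would then pass from the $\ell^2(h\ZZ)$ sum \eqref{eq:discreteSob} to an integral as a Riemann sum and use that, because $\Im M(t)>0$, the weight is $e^{-\frac{\Im M(t)}h(x-x_{fd})^2}$ of effective width $\sqrt h$, so that $\int|y|^{2m}e^{-c_0 y^2/h}\,dy\sim h^{m+1/2}$; tracking the powers, the $h^{-5/4}$-weighted cubic and the $h^{-1/4}$-weighted linear terms both yield $\mathcal O(h^{1/2})$, giving \eqref{eq:approxSolFD}. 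The same Gaussian-moment computation applied to $|\partial_t u^h_{fd}|^2+c|\fwd u^h_{fd}|^2$ gives \eqref{eq:approxEnergyFD}, while \eqref{eq:approxRayFD} follows because on $\ZZ^\dagger(t)$ one has $|x_j-x_{fd}|>h^{1/4}$, so the weight is bounded by $e^{-c_0 h^{-1/2}}$, producing the stated exponential decay. The main obstacle is precisely this nonlocal bookkeeping: the expansion of $\Dhc(Ae^{\frac ih\Phi})$ is an infinite series in $h$ whose expansion parameter $\pm i\Phi_x$ is not small, so one must factor the $O(1)$ oscillation exactly and prove that the truncation remainder is uniformly controlled by a positive power of $h$, with Gaussian-integrable dependence on $x_j-x_{fd}$ over all of $\ZZ$, including the far tail where the smoothness and global bounds on the derivatives of $\Phi$ and $A$ must be exploited.
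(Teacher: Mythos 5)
Your proposal is correct and reproduces the architecture of the paper's proof: the same hierarchy (an $h^{-2}$ eikonal term, an $h^{-1}$ transport term, an $O(1)$ remainder), the same discrete eikonal $4c\sin^2(\Phi_x/2)-\Phi_t^2=0$ solved to order $2$ on the ray, yielding the linear correction $\omega(t)$ and the scalar Riccati equation whose M\"obius solution gives $M(t)$, the same transport equation $\RT_1 A=0$ on the ray fixing the amplitude factor, and the same final estimates via Gaussian moments, Riemann sums, and the off-ray splitting on $\ZZ^\dagger(t)$. The one genuine methodological difference lies in how you obtain the decomposition of $\square_{c,h}\bigl(Ae^{\frac ih\Phi}\bigr)$: you Taylor-expand $A$ and $\Phi$ at $x_j\pm h$ after factoring out the $O(1)$ oscillation $e^{\pm i\Phi_x}$, whereas the paper never expands the exponential at all; it uses the exact discrete product rule \eqref{eq:FDproduct4} together with the trigonometric identities \eqref{eq:identitiesC2}--\eqref{eq:identitiesC3} to write $\square_{c,h}u^h_{fd}$ \emph{exactly} as $e^{ik\Phi_j}\bigl(k^{-3/4}\mathcal R_0+ik^{1/4}\mathcal R_1+k^{5/4}A_j\mathcal R_2\bigr)$ with $\mathcal R_0,\mathcal R_1,\mathcal R_2$ expressed through the discrete derivatives $\Dh A_j$, $\Dh\Phi_j$, $\Dhc\Phi_j$, and only afterwards replaces discrete by continuous derivatives, with the explicit error bounds \eqref{eq:RError} and \eqref{eq:R1tildeError}. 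This is worth noting because it makes the difficulty you flag as your ``main obstacle'' --- controlling an infinite expansion whose parameter $\pm i\Phi_x$ is not small, uniformly over the far tail --- simply disappear: the paper's decomposition is finite and exact, and all $h$-errors are isolated in two elementary estimates. (Incidentally, your obstacle is also milder than you suggest for this specific ansatz: since $\Phi$ is quadratic in $x$, $\Phi(x_j\pm h)=\Phi(x_j)\pm h\Phi_x(x_j)+\tfrac{h^2}{2}M(t)$ with no remainder, so only the amplitude expansion carries a truncation error.) Two small slips that do not affect the argument: in your structural remark the Riccati coefficient is $c\cos\xi_0-\dot x_{fd}^2=-c\sin^2(\xi_0/2)$, not $c-\dot x_{fd}^2$ (you use the correct one when deriving the ODE), and the sign $\pm$ in $\dot M=\pm\tfrac{\sqrt c}{2}\sin(\xi_0/2)M^2$ must be paired with the branch of the ray, which is exactly what the order-$1$ eikonal condition \eqref{eq:phiODE_FDeq2} enforces --- so ``order $1$ is automatic'' only once that sign pairing is made.
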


We mention that, in Theorem \ref{thm:ApproxSolFD}, $x\in\RR$ has to be considered as a \textit{dummy} variable, that we inherited from the continuous construction in Section \ref{sec:GBcontinuous} and we have kept in order to slightly simplify our notation in the forthcoming computations.
 
\begin{remark}
We anticipate that the mesh-size parameter $h$ in the finite difference ansatz \eqref{eq:ansatzFD_Thm} will be related with the high-frequency parameter $k$ in the continuous ansatz \eqref{eq:ansatzConstThm} through the choice $h = k^{-1}$. In this way, our GB construction in Theorem \ref{thm:ApproxSolFD} is consistent with the one of Theorem \ref{thm:ApproxSol} in space dimension $d=1$ for what concerns the approximation rate of the obtained quasi-solutions. 
\end{remark}

\begin{remark}
We highlight that there are some differences between the continuous ansatz provided in Theorem \ref{thm:ApproxSol} and the semi-discrete one of Theorem \ref{thm:ApproxSolFD}. In particular, $u_{fd}^h$ defined in \eqref{eq:ansatzFD_Thm} is not simply the projection on the mesh $\mathcal G_h$ of its continuous counterpart \eqref{eq:ansatzConstThm}. Instead, some corrector terms have been introduced both in the amplitude $A$ and in the phase $\Phi$. As we shall see with more detail in Section \ref{sec:FDansatz}, the introduction of these corrector terms is required to compensate the non-uniform velocity of propagation of the finite difference characteristics $x_{fd}(t)$ in \eqref{eq:rayFD}, which makes $a$ and $\phi$ in \eqref{eq:ansatzConstThm} not suitable choices for our semi-discrete GB ansatz. 
\end{remark}

As we did for Theorem \ref{thm:ApproxSol} before, we are postponing the proof of Theorem \ref{thm:ApproxSolFD} to Appendix \ref{appendixFD}. Here we shall just highlight the main ingredients for the explicit construction of the ansatz \eqref{eq:ansatzFD_Thm}-\eqref{eq:AThm}-\eqref{eq:PhiThm}. 

\subsection{Asymptotic expansion and explicit construction of the GB ansatz}\label{sec:FDansatz} In our forthcoming computations, we shall use the following well-known identities: for all $\alpha,\beta\in\RR$
\begin{subequations}\label{eq:identitiesC}
	\begin{align}
		&e^{i\alpha} - e^{-i\beta} = 2i\sin\left(\frac{\alpha+\beta}{2}\right)e^{\frac i2(\alpha-\beta)} \label{eq:identitiesC2}
		\\[10pt]
		&2 - e^{i\alpha} - e^{-i\beta} = 4\sin^2\left(\frac{\alpha+\beta}{4}\right) - 4i\cos\left(\frac{\alpha+\beta}{2}\right)\sin\left(\frac{\alpha-\beta}{4}\right)e^{\frac i4(\alpha-\beta)} \label{eq:identitiesC3} 
	\end{align}
\end{subequations}

Moreover, similarly to the continuous case of Section \ref{sec:GBcontinuous}, we shall consider the following ansatz for approximated solutions of \eqref{eq:mainEqFD}
\begin{align}\label{eq:ansatzFD}
	u^k_{fd}(x,t) = k^{-\frac 34}A(x,t) e^{ik\Phi(x,t)},  
\end{align}
with suitable phase $\Phi$ and amplitude $A$. 

To properly identify these phase and amplitude functions, the starting point is once again to compute $\square_{c,h} u^k_{fd,j}$ and gather the terms with equal power of the high-frequency parameter $k$. First of all, we have
\begin{align}\label{eq:FDtimeDer}
	\partial_t^2 u^k_{fd,j} = e^{ik\Phi_j}\left[k^{-\frac 34}\partial_t^2A_j + ik^{\frac 14}\Big(2\partial_t A_j\partial_t \Phi_j + A_j\partial_t^2\Phi_j\Big)-k^{\frac 54}A_j(\partial_t\Phi_j)^2\right].
\end{align}
Secondly, using \eqref{eq:FDproduct4} we can compute
\begin{align}\label{eq:FDlaplacianPrel}
	\Dhc u^k_{fd,j} = k^{-\frac 34}\bigg[A_j\Dhc e^{ik\Phi_j} + e^{ik\Phi_j}\Dhc A_j + c\Big(\fwd A_j\fwd e^{ik\Phi_j} + \bwd A_j\bwd e^{ik\Phi_j}\Big)\bigg]. 
\end{align}
Now, using \eqref{eq:FDoperators1}, \eqref{eq:FDoperators2}, \eqref{eq:FDcomposition4}, \eqref{eq:FDcomposition5} and \eqref{eq:identitiesC2}, we can show that
\begin{align*}
	\fwd A_j\fwd e^{ik\Phi_j} + \bwd A_j\bwd e^{ik\Phi_j} = 2ike^{ik\Phi_j}\Dh A_j\frac{\sin(hk\Dh\Phi_j)}{hk}e^{i\frac{h^2 k}{2c}\Dhc\Phi_j}.
\end{align*}
We then get from \eqref{eq:FDlaplacianPrel} that
\begin{align}\label{eq:FDlaplacianPrel2}
	\Dhc u^k_{fd,j} = k^{-\frac 34}\Big(A_j\Dhc e^{ik\Phi_j} + e^{ik\Phi_j}\Dhc A_j\Big) + k^{\frac 14}e^{ik\Phi_j} 2ic \Dh A_j \frac{\sin(hk\Dh\Phi_j)}{hk}e^{i\frac{h^2 k}{2c}\Dhc\Phi_j}. 
\end{align}
Moreover, by means of \eqref{eq:FDoperators6}, \eqref{eq:FDcomposition4}, \eqref{eq:FDcomposition5} and \eqref{eq:identitiesC3}, we get
\begin{align*}
	\Dhc e^{ik\Phi_j} =&\; \frac{c}{h^2}\bigg(e^{ik\Phi_{j+1}} - 2e^{ik\Phi_j} + e^{ik\Phi_{j-1}}\bigg) = -\frac{c}{h^2}e^{ik\Phi_j}\bigg(2-e^{ihk\fwd\Phi_j} - e^{-ihk\bwd\Phi_j}\bigg)
	\\
	=&\,e^{ik\Phi_j}\left(-k^2\frac{4c\sin^2\left(\frac{hk}{2}\Dh\Phi_j\right)}{(hk)^2} + ik \cos\left(hk\Dh\Phi_j\right)\frac{4c\sin\left(\frac{h^2k}{4c}\Dhc\Phi_j\right)}{h^2k}e^{i\frac{h^2k}{4c}\Dhc\Phi_j}\right). 
\end{align*}
Hence, we obtain from \eqref{eq:FDlaplacianPrel2} that
\begin{equation}\label{eq:FDlaplacian}
	\begin{array}{lll}
		\Dhc u^k_{fd,j} =
		\\[12pt]
		\quad k^{-\frac 34} e^{ik\Phi_j}\Dhc A_j
		\\
		\quad + ik^{\frac 14}e^{ik\Phi_j} \displaystyle \Bigg(2c\Dh A_j\frac{\sin\left(hk\Dh\Phi_j\right)}{hk}e^{\frac{ih^2k}{2c}\Dhc\Phi_j} + A_j\cos\left(hk\Dh\Phi_j\right)\frac{4c\sin\left(\frac{h^2k}{4c}\Dhc\Phi_j\right)}{h^2k}e^{i\frac{h^2k}{4c}\Dhc\Phi_j}\Bigg) 
		\\[15pt]
		\quad\displaystyle -k^{\frac 54}e^{ik\Phi_j} A_j\frac{4c\sin^2\left(\frac{hk}{2}\Dh\Phi_j\right)}{(hk)^2}. 
	\end{array} 
\end{equation}
Therefore, joining \eqref{eq:FDtimeDer} and \eqref{eq:FDlaplacian} we finally get
\begin{align}\label{eq:FDdAlambertian}
	\square_{c,h} u^k_{fd,j} = k^{-\frac 34}e^{ik\Phi_j}\mathcal R_0 + ik^{\frac 14}e^{ik\Phi_j}\mathcal R_1 + k^{\frac 54}e^{ik\Phi_j}A_j\mathcal R_2,
\end{align}
with
\begin{subequations}
	\begin{align}
		\mathcal R_0:=&\, \square_{c,h} A_j  \label{eq:R0}
		\\[7pt]
		\mathcal R_1:= &\, 2\partial_t A_j\partial_t\Phi - 2c\Dh A_j \frac{\sin\left(hk\Dh\Phi_j\right)}{hk}e^{\frac{ih^2k}{2c}\Dhc\Phi_j}  \label{eq:R1}
		\\
		&+ A_j\left(\partial_t^2\Phi_j - \cos\left(hk\Dh\Phi_j\right)\frac{4c\sin\left(\frac{h^2k}{4c}\Dhc\Phi_j\right)}{h^2k}e^{i\frac{h^2k}{4c}\Dhc\Phi_j}\right) \notag
		\\[7pt]
		\mathcal R_2:=&\, \frac{4c\sin^2\left(\frac{hk}{2}\Dh\Phi_j\right)}{(hk)^2} - (\partial_t\Phi_j)^2 \label{eq:R2}
	\end{align}
\end{subequations}

Starting from \eqref{eq:FDdAlambertian}, we shall now determine the phase $\Phi$ and the amplitude $A$ of the ansatz by annulling the terms $\mathcal R_1$ and $\mathcal R_2$ on the semi-discrete characteristics. 

To do so, it will be fundamental a correct selection of the parameter $k$, which shall be taken as a power of the step-size $h$: 
\begin{align*}
	k=h^q, \quad q\in\RR. 
\end{align*}
Nevertheless, when doing this, we have to choose carefully the exponent $q\in\RR$. In fact:
\begin{itemize}
	\item If $q<-1$, then 
	\begin{align*}
		\lim_{h\to 0^+} hk = \lim_{h\to 0^+} h^{q+1} = +\infty
	\end{align*}
	and we have from \eqref{eq:R2} that 
	\begin{align*}
		\lim_{h\to 0^+} \mathcal R_2 = -(\partial_t\Phi)^2.
	\end{align*}
	We then obtain a degenerate eikonal equation for $\Phi$ in which the space derivative $\partial_x\Phi$ does not appear, which  suggests that this choice of $q$ is not suitable.
	
	\item If $q>-1$, then 
	\begin{align*}
		\lim_{h\to 0^+} hk = \lim_{h\to 0^+} h^{q+1} = 0
	\end{align*}
	and we have from \eqref{eq:R2} that 
	\begin{align*}
		\lim_{h\to 0^+} \mathcal R_2 = c(\partial_x\Phi)^2 - (\partial_t\Phi)^2.
	\end{align*}
	This is the eikonal equation corresponding to the continuous wave equation \eqref{eq:mainEqR}. Nevertheless, the construction of GB for the finite difference wave equation \eqref{eq:mainEqFD} should be based on solving the eikonal equation corresponding to the principal symbol \eqref{eq:PSFD}, that is  
	\begin{align}\label{eq:eikonalFD}
		\mathcal R_{fd}:= 4c\sin^2\left(\frac{\partial_x\Phi}{2}\right) - (\partial_t\Phi)^2 = 0.
	\end{align}
	Then, also this second choice of $q$ is not appropriate for our construction.	
	
	\item If $q=-1$, then 
	\begin{align*}
		\lim_{h\to 0^+} hk = \lim_{h\to 0^+} h^{q+1} = 1
	\end{align*}
	and we have from \eqref{eq:R2} that 
	\begin{align*}
		\lim_{h\to 0^+} \mathcal R_2 = 4c\sin^2\left(\frac{\partial_x\Phi}{2}\right) - (\partial_t\Phi)^2 = \mathcal R_{fd}.
	\end{align*}
	Therefore, $k = h^{-1}$ is the correct choice for the high-frequency parameter.	
\end{itemize}

\begin{remark}
We stress that more general choices of the high-frequency parameter as a function of the mesh size $h$ would also be possible. In fact, we could take any $k=\zeta(h)$ with
\begin{align}\label{eq:kSel}
	\lim_{h\to 0^+} h\zeta(h) = 1.
\end{align}
The choice $k=h^{-1}$, for the sake of simplicity, is the most natural situation in which \eqref{eq:kSel} holds.
\end{remark}

In view of the above discussion, in the sequel, we will consider the following ansatz for approximated solutions of \eqref{eq:mainEqFD} 
\begin{align}\label{eq:ansatzFD_h}
	u^h_{fd}(x,t) = h^{\frac 34}A(x,t) e^{\frac ih\Phi(x,t)}.
\end{align}
Then, from \eqref{eq:FDdAlambertian} we get
\begin{align}\label{eq:FDdAlambertian_h}
	\square_{c,h} u^h_{fd,j} & = e^{ik\Phi_j}\Big[h^{\frac 34}\mathcal R_0 + ih^{-\frac 14}\mathcal R_1 + h^{-\frac 54}A_j\mathcal R_2\Big] \notag  
	\\
	&= e^{\frac ih\Phi_j}\left[h^{\frac 34}\left(\mathcal R_0 + A_j \frac{\mathcal R_2-\mathcal R_{fd}}{h^2}\right) + ih^{-\frac 14}\mathcal R_1 + h^{-\frac 54}A_j\mathcal R_{fd}\right].
\end{align}

\subsubsection{Frequency ranges: discrete versus continuous}
Before continuing further with the technical details about the construction of GB solutions for the discrete wave equation \eqref{eq:mainEqFD}, let us devote some words to a heuristic discussion showing how our asymptotic analysis allows building a bridge to connect the GB theory for the continuous model \eqref{eq:mainEqR} with the FD regime studied in this paper. 

To this end, let us consider the FD symbol \eqref{eq:FDsymbol_full} that, in the one-dimensional case that we are addressing in this section, reads as
\begin{align}\label{eq:FDsymbol_1D}
	\mathcal P_{fd,h}(\xi,\tau) = -\tau^2 + \frac{4c}{h^2}\sin^2\left(\frac{h\xi}{2}\right).
\end{align}
Taking into account that the sinus is an analytic function, we can replace it with its Taylor expansion 
\begin{align}\label{eq:taylor}
	\sin\left(\frac{h\xi}{2}\right) = \sum_{n\geq 0} \beta_n(h\xi)^{2n+1}, \quad \beta_n = \frac{(-1)^n}{2^{2n+1}(2n+1)!}\text{ for all }n\geq 0,
\end{align}
thus obtaining an equivalent FD symbol in the form 
\begin{align*}
	\mathcal P_{fd,h}(\xi,\tau) = -\tau^2 + 4c\sum_{n\geq 0}\gamma_n h^{2n}\xi^{2n+2}, 
\end{align*}
with 
\begin{align*}
	\gamma_n = \sum_{m=0}^n \beta_n\beta_m = \sum_{m=0}^n\frac{(-1)^{m+n}}{2^{2m+2n+2}(2m+1)!(2n+1)!}, \quad\text{ for all }n\geq 0.
\end{align*}
Moreover, observing that $\gamma_0 = 1/4$, we can easily obtain
\begin{align}\label{eq:FDsymbol_full_1D}
	\mathcal P_{fd,h}(\xi,\tau) = -\tau^2 + c\xi^2 + 4c\sum_{n\geq 1}\gamma_n h^{2n}\xi^{2n+2}.
\end{align}

From the above expression, we can immediately see how the symbol $-\tau^2 + c\xi^2$ of the continuous one-dimensional wave equation is obtained simply by truncating the Taylor expansion \eqref{eq:taylor} at the first term $n=0$. But, actually, \eqref{eq:FDsymbol_full_1D} hides more information. 

As a matter of fact, the series in \eqref{eq:FDsymbol_full_1D} produces different types of effects on the symbol $\mathcal P_{fd,h}(\xi,\tau)$, depending on the range of frequencies at which we observe it. 

\medskip 
\paragraph*{\textbf{Case 1:} $|\xi|\sim h^{-1}$} We start by analyzing the frequency regime $|\xi|\sim h^{-1}$ that, we recall, is the one at which we are going to construct our GB solution. Consider the partial sums
\begin{align*}
	s_N:= 4c\sum_{n=1}^N\gamma_n h^{2n}\xi^{2n+2}, \quad N\in\NN^\ast = \NN\setminus\{0\}
\end{align*}
and the associated partial symbol 
\begin{align}\label{eq:FDsymbol_partial}
	\mathcal P_{fd,h,N}(\xi,\tau)\coloneqq -\tau^2 + c\xi^2 + s_N,
\end{align}
and observe that, when $|\xi|\sim h^{-1}$, for all $N\in\NN^\ast$ we can approximate $s_N$ as
\begin{align*}
	s_N = 4c\sum_{n=1}^N\gamma_n h^{2n}\xi^{2n+2} = 4c\xi^2\sum_{n=1}^N\gamma_n h^{2n}\xi^{2n} \sim 4c\xi^2\sum_{n=1}^N\gamma_n. 
\end{align*}
When replacing the above expression into \eqref{eq:FDsymbol_partial}, we then obtain that
\begin{align*}
	\mathcal P_{fd,h,N}(\xi,\tau) \sim -\tau^2 + c_N\xi^2,
\end{align*}
with 
\begin{align*}
	c_N\coloneqq c\left(1 + 4\sum_{n=1}^N\gamma_n\right), \quad\text{ for all }N\in\NN^\ast.
\end{align*}

In other words, the contribution of the partial sums $s_N$ is that of introducing correction terms on the velocity of propagation of the waves, making it deviating from its usual value $c$. This phenomenon is appreciated in Figure \ref{fig:symbol}, where we show the function $\xi^2 + s_N$ for different values of $N$, ranging from $N=0$ (corresponding to $c\xi^2$) up to the finite-difference symbol \eqref{eq:FDsymbol_1D} as $N\to +\infty$. We can see in the plot how the successive approximations \eqref{eq:FDsymbol_partial} fill the gap between the continuous and finite-difference setting, reducing their slope during the process and, therefore, inducing the aforementioned adjustments in the waves' propagation velocity.

\begin{SCfigure}[1][!h]
	\centering
	\includegraphics[width=0.62\textwidth]{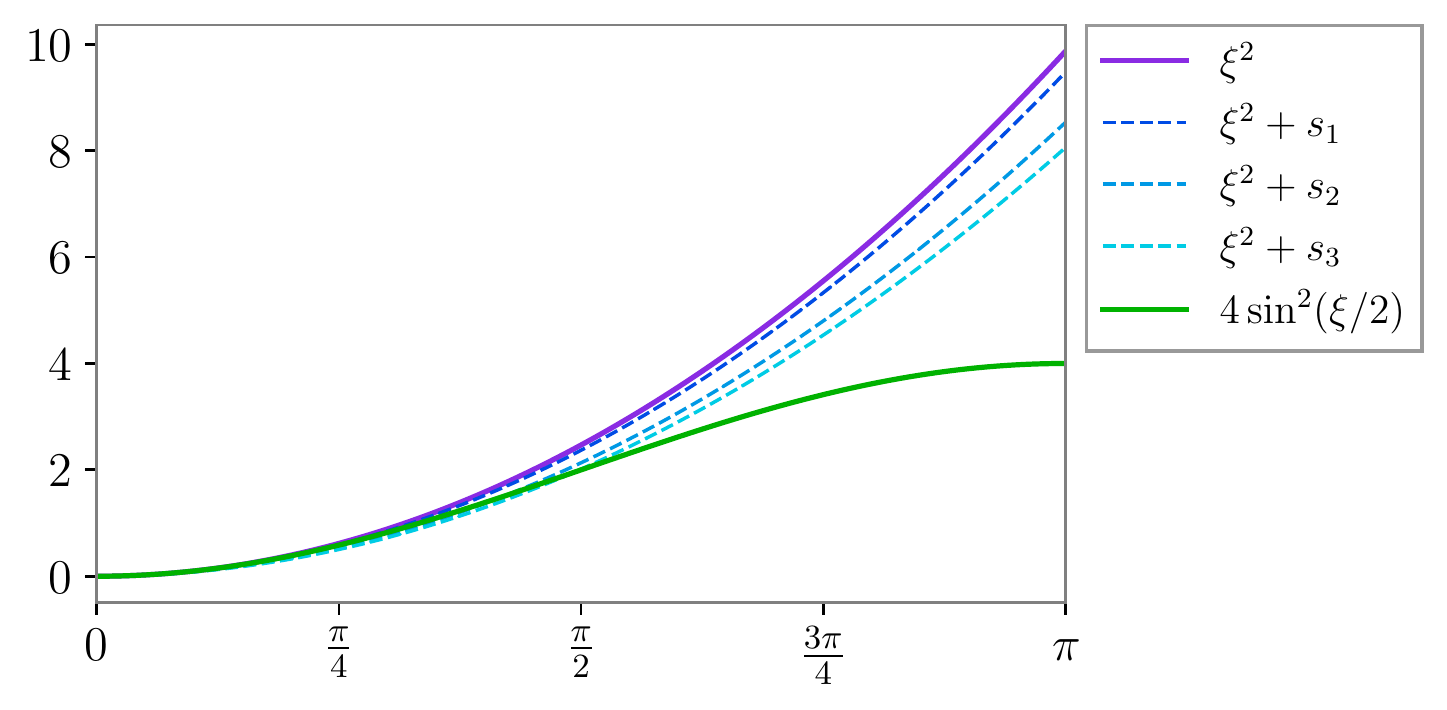}
	\caption{Function $\xi^2 + s_N$ for $\xi\in (0,\pi)$ and different values of $N$, ranging from $N=0$ up $N\to +\infty$.}\label{fig:symbol}
\end{SCfigure}

We stress that, since $\{\gamma_n\}_{n\geq 0}\in\ell^1$, there exists some $\hat c\in\RR$ such that $c_N\to \hat c$ when $N\to +\infty$. This means that, when $|\xi|\sim h^{-1}$, the trigonometric symbol \eqref{eq:FDsymbol_1D} generates high-frequency spurious solutions of the wave equation, traveling at a velocity $\hat c$. As observed in several previous works (see \cite{biccari2020propagation,macia2002lack,marica2015propagation} and the references therein), the presence of these solutions will contaminate all kind of conclusions about the properties of the finite-difference wave equation \eqref{eq:mainEqFD}, with consequences, for instance, on related inverse or control problems. 

\medskip 
\paragraph*{\textbf{Case 2:} $|\xi|\sim h^{-\frac{2n}{2n+2}}$ \textbf{for all} $n\in\NN^\ast$}

A second interesting frequency regime that deserves some further discussion is 
\begin{align*}
	|\xi|\sim h^{-\frac{2n}{2n+2}} \quad\text{ for all } n\in\NN^\ast.
\end{align*}

In particular, we can see that, in this regime, the contribution of the sum into the trigonometric symbol \eqref{eq:FDsymbol_full_1D} changes with respect to the situation of Case 1 above. To this end, let us start by rewriting
\begin{align*}
	\mathcal P_{fd,h}(\xi,\tau) = -\tau^2 + c\xi^2 + 4c\gamma_1 h^2\xi^4 + 4c\sum_{n\geq 2}\gamma_n h^{2n}\xi^{2n+2}.
\end{align*}

We can easily see that, in the regime $|\xi|\sim h^{-\frac 12}$ (that we stress corresponds to $|\xi|\sim h^{-\frac{2n}{2n+2}}$ when $n=1$), the last term of the above expression is of the order of $h$ and, therefore, negligible as $h\to 0^+$. This leads to the following approximation of the symbol $\mathcal P_{fd,h}(\xi,\tau)$:
\begin{align*}
	\mathcal P_{fd,h}(\xi,\tau) \sim -\tau^2 + c\xi^2 + \gamma_1 h^2\xi^4.
\end{align*}
Notice that this is the Fourier symbol associated with the fourth-order PDE  
\begin{align*}
	u_{tt} - c\partial^2_x u - \gamma_1 h^2 \partial^4_x u = 0.		
\end{align*}

In the same fashion, when $|\xi|\sim h^{-\frac 23}$ (corresponding to $|\xi|\sim h^{-\frac{2n}{2n+2}}$ when $n=2$), we can approximate $\mathcal P_{fd,h}(\xi,\tau)$ with 
\begin{align*}
	\mathcal P_{fd,h}(\xi,\tau) \sim -\tau^2 + c\xi^2 + \gamma_1 h^2\xi^4 + \gamma_2h^4\xi^6,	
\end{align*}
with the associated sixth-order PDE  
\begin{align*}
	u_{tt} - c\partial^2_x u - \gamma_1 h^2 \partial^4_x u - \gamma_2 h^4 \partial^6_x u = 0.		
\end{align*}

This kind of reasoning can be carried on for all $N\in\NN^\ast$, until recovering the symbol \eqref{eq:FDsymbol_1D} in the regime $|\xi|\sim h^{-1}$ when $n\to +\infty$. In particular, this heuristic discussion suggests that we can fill the gap between the pure wave equation $u_{tt}-c\partial_x^2 u = 0$ and the discrete one associated with the trigonometric symbol \eqref{eq:FDsymbol_1D} by adding a series of correcting terms of the form
\begin{align}\label{eq:corrections}
	-\gamma_nh^{2n}\partial^{2n+2}_x u, \quad n\in\NN^\ast,
\end{align}
that generate a family of solutions for the discretized wave equation that are observable only in the high-frequency regime $|\xi|\sim  h^{-\frac{2n}{2n+2}}$. 

A possible way to appreciate the impact of these solution on the propagation properties of discrete wave equation would be to develop a general GB analysis starting from the symbol \eqref{eq:FDsymbol_full_1D}. To do that, we may expect that the approach we develop in this paper (which, according to our previous discussion, covers the limit case $N\to +\infty$) is still applicable up to some modification, including and adaptation of the ansatz so to take into account the presence of the correcting terms \eqref{eq:corrections}, and a different selection of the frequency parameter $k(h)$, that we could conjecture to be 
\begin{align*}
	k(h) \sim h^{-\frac{2n}{2n+2}}, \quad\text{ for all } n\in\NN^\ast.
\end{align*}

This would provide us with a series of GB profiles, whose superposition would connect the continuous GB solutions of Theorem \ref{thm:ApproxSol} to the semi-discrete ones of Theorem \ref{thm:ApproxSolFD}.

\subsubsection{Design of the phase $\Phi$}\label{sec:PhaseFD}

As for the continuous case of Section \ref{sec:GBcontinuous}, a suitable phase for our GB construction needs to possess two main features. On the one hand, $\Phi$ should contain a term of the form
\begin{align*}
	M(t)(x-x_{fd}(t))^2
\end{align*}
with $\Im (M(t))>0$ for all $t>0$, to ensure that the ansatz \eqref{eq:ansatzFD_h} is really a Gaussian profile transported along the ray $x_{fd}$. On the other hand, $\Phi$ has to be such that
\begin{align}\label{eq:R2zero}
	\partial_h^\alpha \mathcal R_2(x_{fd}(t),t) = 0 \text{ for all } t\in\RR \text{ and } \alpha\in\{0,1,2\},
\end{align}
where $\partial_h^\alpha$ denotes a discrete derivative of order $\alpha$ on the mesh $\mathcal G^h$. 

Hence, to impose \eqref{eq:R2zero}, one has to compute discrete derivatives of $\Phi$ on $x_{fd}(t)$ which, however, may lead to cumbersome calculations. To avoid them, we replace the semi-discrete eikonal $\mathcal R_2$ in \eqref{eq:R2} with $\mathcal R_{fd}$ given in \eqref{eq:eikonalFD}, that only involves continuous derivatives. We are allowed to do that since the error is small:
\begin{align}\label{eq:RError}
	|\mathcal R_2 - \mathcal R_{fd}| &= 4c\left|\sin^2\left(\frac{\Dh\Phi_j}{2}\right)-\sin^2\left(\frac{\partial_x\Phi}{2}\right)\right|
	\\
	&= 4c\left|\sin\left(\frac{\Dh\Phi_j-\partial_x\Phi}{2}\right)\sin\left(\frac{\Dh\Phi_j+\partial_x\Phi}{2}\right)\right| \leq 2c\left|\Dh\Phi_j-\partial_x\Phi\right| = \mathcal O(h^2). \notag
\end{align}
Hence, in what follows, we will design $\Phi$ such that
\begin{align}\label{eq:Rfdzero}
	\partial_x^\alpha \mathcal R_{fd}(x_{fd}(t),t) = 0 \text{ for all } t\in\RR \text{ and } \alpha\in\{0,1,2\}. 
\end{align}

Taking inspiration from the continuous framework of Section \ref{sec:GBcontinuous}, one could then try considering a phase function $\Phi$ with the same structure as $\phi$ in \eqref{eq:phi}, i.e.
\begin{align}\label{eq:phiHybridFalse}
	\Phi(x,t) = \xi_0(x-x_{fd}(t)) + \frac 12 M(t)(x-x_{fd}(t))^2, \quad\Im (M(t))>0.
\end{align}

Nevertheless, we can easily see that this would not be a good candidate for our construction. In fact, such a function $\Phi$ does not satisfy \eqref{eq:Rfdzero}, not even at order $\alpha = 0$. Indeed, we can readily check from \eqref{eq:rayFD} and \eqref{eq:phiHybridFalse} that
\begin{align*}
	\mathcal R_{fd}(x_{fd}(t),t) &= 4c\sin^2\left(\frac{\partial_x\Phi(x_{fd}(t),t)}{2}\right) - (\partial_t\Phi(x_{fd}(t),t))^2 
	\\
	&= 4c\sin^2\left(\frac{\xi_0}{2}\right) - \xi_0^2\dot x_{fd}(t)^2 = 4c\sin^2\left(\frac{\xi_0}{2}\right) - c\xi_0^2\cos^2\left(\frac{\xi_0}{2}\right).
\end{align*}
Hence, we would have $\mathcal R_{fd}(x_{fd}(t),t) = 0$ only for $\widehat{\xi}_0\in\RR$ satisfying the trigonometric equation
\begin{align}\label{eq:xi0hat}
	\widehat{\xi}_0\cos\left(\frac{\widehat{\xi}_0}{2}\right) = \pm 2\sin\left(\frac{\widehat{\xi}_0}{2}\right).
\end{align}

But for all the values of $\xi_0\in\RR$ such that \eqref{eq:xi0hat} is not fulfilled, we would have $\mathcal R_{fd}(x_{fd}(t),t) \neq 0$. This tells us that $\Phi$ as in \eqref{eq:phiHybridFalse} is not appropriate to generate a suitable GB ansatz for \eqref{eq:mainEqFD}. 

To cope with this fact, taking inspiration from general GB constructions described for instance in \cite{liu2010recovery,liu2014gaussian,liu2013error,liu2016sobolev}, we shall introduce a correction term in the definition of the phase. In particular, we shall take $\Phi$ in the form
\begin{align}\label{eq:phiFD}
	\Phi(x,t) = \omega(t) + \xi_0(x-x_{fd}(t)) + \frac 12 M(t)(x-x_{fd}(t))^2,
\end{align}
with $\omega$ and $M$ to be determined by imposing \eqref{eq:Rfdzero}. To this end, let us first compute
\begin{align}\label{eq:eikonalFDder}
	\partial_x\mathcal R_{fd}(x,t) = 2\Big(c\sin(\partial_x\Phi)\partial_{xx}\Phi - \partial_t\Phi\partial_{tx}\Phi\Big),
\end{align}
and 
\begin{align}\label{eq:eikonalFDder2}
	\partial_{xx} \mathcal R_{fd}(x,t) = 2\Big(c\cos(\partial_x\Phi)(\partial_{xx}\Phi)^2 + c\sin(\partial_x\Phi)\partial_{xxx}\Phi - (\partial_{tx}\Phi)^2 - \partial_t\Phi\partial_{txx}\Phi\Big).
\end{align}
Moreover, from \eqref{eq:phiFD}, we get that
\begin{equation}\label{eq:phiChar}
	\begin{array}{ll}
		\partial_x\Phi(x_{fd}(t),t) = \xi_0 & \quad\quad\quad\partial_{tx}\Phi(x_{fd}(t),t) = - M(t)\dot x_{fd}(t)
		\\[5pt]
		\partial_t\Phi(x_{fd}(t),t) = \dot \omega(t) -\xi_0\dot x_{fd}(t) & \quad\quad\quad\partial_{xxx}\Phi(x_{fd}(t),t) = 0 
		\\[5pt]
		\partial_{xx}\Phi(x_{fd}(t),t) = M(t) & \quad\quad\quad\partial_{txx}\Phi(x_{fd}(t),t) = \dot M(t).
	\end{array} 
\end{equation}
Plugging this in \eqref{eq:eikonalFD}, \eqref{eq:eikonalFDder} and \eqref{eq:eikonalFDder2}, we then obtain that
\begin{align*}
	&\mathcal R_{fd}(x_{fd}(t),t) = 4c\sin^2\left(\frac{\xi_0}{2}\right) - \Big(\dot \omega(t) - \xi_0\dot x_{fd}(t)\Big)^2
	\\
	&\partial_x\mathcal R_{fd}(x_{fd}(t),t) = 2\bigg(c\sin(\xi_0) + \Big(\dot \omega(t) - \xi_0\dot x_{fd}(t)\Big)\dot x_{fd}(t)\bigg)M(t)
	\\
	&\partial_{xx}\mathcal R_{fd}(x_{fd}(t),t) = 2\bigg(\Big(c\cos(\xi_0)- \dot x_{fd}(t)^2\Big)M(t)^2 - \Big(\dot \omega(t) - \xi_0\dot x_{fd}(t)\Big)\dot M(t)\bigg).
\end{align*}

Therefore, by imposing \eqref{eq:Rfdzero}, we have that the functions $\omega$ and $M$ in \eqref{eq:phiFD} are obtained by solving the following ODE system
\begin{subequations}
	\begin{empheq}[left=\empheqlbrace]{align}
		&\Big(\dot\omega(t) - \xi_0\dot x_{fd}(t)\Big)^2 = 4c\sin^2\left(\frac{\xi_0}{2}\right) \label{eq:phiODE_FDeq1}
		\\[5pt]
		&\Big(\dot\omega(t) - \xi_0\dot x_{fd}(t)\Big)\dot x_{fd}(t) = -c\sin(\xi_0) \label{eq:phiODE_FDeq2}
		\\[7pt]
		&\Big(\dot\omega(t) - \xi_0\dot x_{fd}(t)\Big)\dot M(t) = \Big(c\cos(\xi_0)- \dot x_{fd}(t)^2\Big)M(t)^2 \label{eq:phiODE_FDeq3}
	\end{empheq}
\end{subequations}
with initial conditions $(\omega(0),M(0)) = (\omega_0,M_0)$. In what follows, without losing generality, we will always assume $\omega_0 = 0$.

\subsubsection{Solution of the ODE system \eqref{eq:phiODE_FDeq1}-\eqref{eq:phiODE_FDeq3}}

Let us start by observing that the first equation \eqref{eq:phiODE_FDeq1} is actually redundant, which is not surprising since the ODE system has only two unknowns ($\omega(t)$ and $M(t)$). In fact, by taking the square in both terms of the second equation \eqref{eq:phiODE_FDeq2}, and using the explicit expression of the finite difference bi-characteristic rays $x_{fd}(t)$ obtained in \eqref{eq:rayFD}, we have
\begin{align*}
	c^2\sin^2(\xi_0) = 4c^2\sin^2\left(\frac{\xi_0}{2}\right)\cos^2\left(\frac{\xi_0}{2}\right) = \Big(\dot\omega(t) - \xi_0\dot x_{fd}(t)\Big)^2\dot x_{fd}(t)^2 = c\Big(\dot\omega(t) - \xi_0\dot x_{fd}(t)\Big)^2\cos^2\left(\frac{\xi_0}{2}\right),
\end{align*}
so that we immediately get 
\begin{align*}
	\Big(\dot\omega(t) - \xi_0\dot x_{fd}(t)\Big)^2 = 4c\sin^2\left(\frac{\xi_0}{2}\right).
\end{align*}

In other words, a function $\omega(t)$ satisfying \eqref{eq:phiODE_FDeq2} will automatically solve also \eqref{eq:phiODE_FDeq1}. In view of this, the original ODE system reduces to
\begin{subequations}
	\begin{empheq}[left=\empheqlbrace]{align}
		&\Big(\dot\omega(t) - \xi_0\dot x_{fd}(t)\Big)\dot x_{fd}(t) = -c\sin(\xi_0) \label{eq:phiODE_FD2eq2}
		\\[7pt]
		&\Big(\dot\omega(t) - \xi_0\dot x_{fd}(t)\Big)\dot M(t) = -c\sin^2\left(\frac{\xi_0}{2}\right)M(t)^2 \label{eq:phiODE_FD2eq3}
	\end{empheq}
\end{subequations}
where we have used the fact that
\begin{align*}
	c\cos(\xi_0)- \dot x_{fd}(t)^2 = c\cos(\xi_0)- c\cos^2\left(\frac{\xi_0}{2}\right) = -c\sin^2\left(\frac{\xi_0}{2}\right).
\end{align*}
Now, replacing \eqref{eq:rayFD} into \eqref{eq:phiODE_FD2eq2}, we obtain 
\begin{align}\label{eq:phiODE_FD3eq2}
	\pm\sqrt{c}\Big(\dot\omega(t) - \xi_0\dot x_{fd}(t)\Big)\cos\left(\frac{\xi_0}{2}\right) = -c\sin(\xi_0) \quad\longrightarrow\quad \dot\omega(t) - \xi_0\dot x_{fd}(t) = \mp2\sqrt{c}\sin\left(\frac{\xi_0}{2}\right), 
\end{align}
from which we can easily compute
\begin{align}\label{eq:phiODE_FDsol_omega}
	\omega(t) = \pm\sqrt{c}\left(\xi_0\cos\left(\frac{\xi_0}{2}\right) - 2\sin\left(\frac{\xi_0}{2}\right)\right)t. 
\end{align}
Moreover, notice that, when 
\begin{align*}
	\xi_0\cos\left(\frac{\xi_0}{2}\right) = 2\sin\left(\frac{\xi_0}{2}\right),
\end{align*}
which is one of the two solutions of \eqref{eq:xi0hat}, we have $\omega(t) = 0$. This is consistent with the fact that, for the above value of $\xi_0$, the phase function $\phi$ given in \eqref{eq:phi} for the ansatz of the continuous wave equation \eqref{eq:mainEqR} satisfies the finite difference eikonal equation 
\begin{align*}
	\mathcal R_{fd}(x_{fd}(t),t) = 0
\end{align*}
and, therefore, the correction term $\omega(t)$ is not needed.

Finally, to compute $M$ and conclude the resolution of the ODE system \eqref{eq:phiODE_FDeq1}-\eqref{eq:phiODE_FDeq3}, we replace \eqref{eq:phiODE_FDsol_omega} into \eqref{eq:phiODE_FD2eq3}. In this way, we obtain
\begin{align}\label{eq:phiODE_FDsol_M}
	M(t)= \frac{M_0}{1\mp\frac{M_0\sqrt{c}}{2}\sin\left(\frac{\xi_0}{2}\right)t}.
\end{align}

Let us recall, however, that to guarantee that the phase function $\Phi$ really generates an ansatz \eqref{eq:ansatzFD_h} with a Gaussian envelop we shall have $\Im (M(t)) > 0$ for all $t>0$. This can be ensured by a proper choice of the initial datum $M_0$. In fact, a simple calculation gives us
\begin{align*}
	\Im (M(t)) = \frac{\left(1\mp\frac{\Re (M_0)\sqrt{c}}{2}\sin\left(\frac{\xi_0}{2}\right)t\right)\Im (M_0) - \left(1\mp\frac{\Im (M_0)\sqrt{c}}{2}\sin\left(\frac{\xi_0}{2}\right)t\right)\Re (M_0)}{1 + \frac{|M_0|^2 c}{4}\sin^2\left(\frac{\xi_0}{2}\right)t^2 \mp\Re (M_0)\sqrt{c}\sin\left(\frac{\xi_0}{2}\right)t}.
\end{align*}
We then immediately see that it is enough to take 
\begin{equation*}
	M_0\in\CC \quad\text{ with } \begin{array}{ll} \Re (M_0) = 0 \\ \Im (M_0) >0 \end{array} 
\end{equation*}
to obtain
\begin{align*}
	\Im (M(t)) = \frac{\Im (M_0)}{1 + \frac{|M_0|^2 c}{4}\sin^2\left(\frac{\xi_0}{2}\right)t^2} > 0 \quad\text{ for all } t>0.
\end{align*}
This concludes the construction of the phase $\Phi$.

\subsubsection{Design of the amplitude $A$}

To construct the amplitude function $A$, we start again from \eqref{eq:FDdAlambertian_h} and notice that, since the eikonal equation $\mathcal R_{fd}=0$ is solved up to the second order on the ray $x_{fd}(t)$, the last term in that identity will not contribute in the computation. On the other hand, the second term of order $h^{-\frac 14}$ will definitely contribute. In particular, $A$ shall be determined by imposing 
\begin{align*}
	\mathcal R_1 A_j(x_{fd}(t),t) = 0.
\end{align*}

As for the phase $\Phi$ before, we notice that the expression of $\mathcal R_1$ involves discrete partial derivatives with respect to the space variable, which may lead to cumbersome computations. To avoid them, we replace $\mathcal R_1$ with the following expression
\begin{align}\label{eq:R1tilde}
	\RT_1 A_j := 2\partial_t A_j\partial_t\Phi_j - 2c\partial_x A_j\sin(\partial_x\Phi_j) + A_j\Big(\partial_t^2\Phi_j - c\cos(\partial_x\Phi_j)\partial_x^2\Phi_j\Big), 
\end{align}
in which only continuous derivatives in space appear. Once again, we are allowed to do that since the error is small. Indeed, we have
\begin{align*}
	\mathcal R_1A_j - \RT_1A_j = 2c \mathcal K_1 + A_j\mathcal K_2
\end{align*}
with 
\begin{align*}
	\mathcal K_1 := \partial_xA_j\sin(\partial_x\Phi_j) -\Dh A_j\sin\left(\Dh\Phi_j\right)e^{i\frac{h}{2c}\Dhc\Phi_j}
\end{align*}
and
\begin{align*}
	\mathcal K_2 := c\cos(\partial_x\Phi_j)\partial_x^2\Phi_j - \cos\left(\Dh\Phi_j\right)\frac{4c\sin\left(\frac{h}{4c}\Dhc\Phi_j\right)}{h}e^{i\frac{h}{4c}\Dhc\Phi_j}. 
\end{align*}
Moreover, we can easily rewrite
\begin{align*}
	\mathcal K_1 =&\; \Big(\partial_xA_j - \Dh A_j\Big)\sin(\partial_x\Phi_j) + \Dh A_j\Big(\sin\left(\partial_x\Phi_j\right) - \sin\left(\Dh\Phi_j\right)\Big) + \Dh A_j\sin\left(\Dh\Phi_j\right)\Big(1-e^{i\frac{h}{2c}\Dhc\Phi_j}\Big)
	\\[10pt]
	\mathcal K_2 =&\; c\Big(\cos(\partial_x\Phi_j)-\cos(\Dh\Phi_j)\Big)\partial_x^2\Phi_j + \cos(\Dh\Phi_j)\left(c\partial_x^2\Phi_j - \frac{4c\sin\left(\frac{h}{4c}\Dhc\Phi_j\right)}{h}\right) 
	\\
	&+ \cos(\Dh\Phi_j)\frac{4c\sin\left(\frac{h}{4c}\Dhc\Phi_j\right)}{h}\left(1- e^{i\frac{h}{4c}\Dhc\Phi_j} \right)
\end{align*}
and estimate
\begin{align*}
	|\mathcal K_1| \leq& \; |\partial_xA_j - \Dh A_j| + |\Dh A_j||\sin\left(\partial_x\Phi_j\right) - \sin\left(\Dh\Phi_j\right)| + |\Dh A_j|\left|1-e^{i\frac{h}{2c}\Dhc\Phi_j}\right|
	\\
	\leq& \; |\partial_xA_j - \Dh A_j| + |\Dh A_j||\partial_x\Phi_j - \Dh\Phi_j| + \mathcal C |\Dhc\Phi_j||\Dh A_j|h \leq\mathcal C |\Dhc\Phi_j||\Dh A_j|h + \mathcal O(h^2)
	\\[10pt]
	|\mathcal K_2| \leq& \; c|\partial_x^2\Phi_j||\cos(\partial_x\Phi_j)-\cos(\Dh\Phi_j)| + \left|c\partial_x^2\Phi_j - \frac{4c\sin\left(\frac{h}{4c}\Dhc\Phi_j\right)}{h}\right| 
	\\
	&+ \left|\frac{4c\sin\!\left(\frac{h}{4c}\Dhc\Phi_j\right)}{h}\right|\left|1- e^{i\frac{h}{4c}\Dhc\Phi_j}\right|
	\\
	\leq& \; c|\partial_x^2\Phi_j||\partial_x\Phi_j-\Dh\Phi_j| + |c\partial_x^2\Phi_j - \Dhc\Phi_j| + \mathcal C|\Dhc\Phi_j|^2h \leq \mathcal C|\Dhc\Phi_j|^2h + \mathcal O(h^2).
\end{align*}
Therefore,
\begin{align}\label{eq:R1tildeError}
	|\mathcal R_1A_j - \RT_1A_j| \leq \mathcal C\Big(|\Dhc\Phi_j| + |\Dhc\Phi_j|^2\Big)h + \mathcal O(h^2)
\end{align}
and, if we rewrite
\begin{align}\label{eq:dAlambertianFD}
	\square_{c,h} u^h_{fd,j} = e^{\frac ih\Phi_j}\left[h^{\frac 34}\left(\mathcal R_0A_j + A_j \frac{\mathcal R_2-\mathcal R_{fd}}{h^2}\right) + ih^{-\frac 14}(\mathcal R_1A_j - \RT_1 A_j) + ih^{-\frac 14}\RT_1 A_j + h^{-\frac 54}A_j\mathcal R_{fd}\right], \notag
	\\	
\end{align}
we then see that it is enough to design $A_j$ such that
\begin{align*}
	\RT_1 A_j(x_{fd}(t),t) = 0.
\end{align*}
By means of \eqref{eq:rayFD}, \eqref{eq:phiChar} and \eqref{eq:R1tilde}, we can readily see that this amounts at solving the equation
\begin{align*}
	\frac{d}{dt} A(x_{fd}(t),t) = \frac{\sqrt{c}}{4}\sin\left(\frac{\xi_0}{2}\right) M(t)A(x_{fd}(t),t),
\end{align*} 
from which, taking into account the explicit expression of $M$ given in \eqref{eq:phiODE_FDsol_M}, we obtain
\begin{align*}
	A(x_{fd}(t),t) = A(x_0,0)e^{\frac{\sqrt{c}}{4}\sin\left(\frac{\xi_0}{2}\right)\int_0^t M(s)\,ds} = A(x_0,0)e^{\mp\frac 12 \ln\left(1-\frac{M_0\sqrt{c}}{2}\sin\left(\frac{\xi_0}{2}\right)t\right)}.
\end{align*} 
Choosing $A(x_0,0)=1$, we then get
\begin{align*}
	A(x_{fd}(t),t) = e^{\mp\frac 12 \ln\left(1-\frac{M_0\sqrt{c}}{2}\sin\left(\frac{\xi_0}{2}\right)t\right)}.
\end{align*} 

In the same spirit of what we did in Section \ref{sec:GBcontinuous}, this suggests to take the amplitude function in our discrete ansatz \eqref{eq:ansatzFD_h} in the form
\begin{align}\label{eq:aFD}
	A(x,t) = e^{-(x-x_{fd}(t))^2} e^{\mp\frac 12 \ln\left(1-\frac{M_0\sqrt{c}}{2}\sin\left(\frac{\xi_0}{2}\right)t\right)}.
\end{align} 
In conclusion, we finally have from \eqref{eq:ansatzFD_h}, \eqref{eq:phiFD} and \eqref{eq:aFD} that
\begin{align}\label{eq:ansatzFDexpl}
	&u^h_{fd}(x,t) = h^{\frac 34} e^{-(x-x_{fd}(t))^2} e^{\mp\frac 12 \ln\left(1-\frac{M_0\sqrt{c}}{2}\sin\left(\frac{\xi_0}{2}\right)t\right)} e^{\frac ih \Big[\omega(t) + \xi_0(x-x_{fd}(t)) + \frac 12 M(t)(x-x_{fd}(t))^2\Big]}, 
\end{align}
with $\omega$ and $M$ given by \eqref{eq:phiODE_FDsol_omega} and \eqref{eq:phiODE_FDsol_M}, respectively.

\begin{remark}
To conclude this section, let us highlight again the main differences between the finite difference GB ansatz \eqref{eq:ansatzFDexpl} and the continuous one we have introduced in Theorem \ref{thm:ApproxSol}. These differences arise at two levels.
\begin{itemize}
	\item[1.] First of all, in \eqref{eq:ansatzFDexpl}, we have a phase function 
	\begin{align}\label{eq:PhiRem}
		\Phi(x,t) = \omega(t) + \xi_0(x-x_{fd}(t)) + \frac 12 M(t)(x-x_{fd}(t)),
	\end{align}	
	with $\omega$ and $M$ given by \eqref{eq:phiODE_FDsol_omega} and \eqref{eq:phiODE_FDsol_M}, respectively. This differs from the phase 
	\begin{align}\label{eq:phiRem}
		\phi(x,t) = \xi_0(x-x_{fd}(t)) + \frac 12 M_0(x-x_{fd}(t))
	\end{align}
	of Theorem \ref{thm:ApproxSol} in two aspects. On the one hand, in \eqref{eq:PhiRem} we have a time-dependent complex-valued function $M(t)$ in the quadratic part of the phase, which replaces the constant $M_0$ in \eqref{eq:phiRem}. In addition to that, we remark the presence of the correction term $\omega$. Both modifications stem from the fact that a phase in the form \eqref{eq:phiRem} does not fulfill the condition \eqref{eq:Rfdzero} and, in particular, does not solve the finite difference eikonal equation on the semi-discrete characteristics. Therefore, it is not suitable to build a GB ansatz correctly approximating the solutions of \eqref{eq:mainEqFD}. 
	\item[2.] Secondly, also the amplitude function 
	\begin{align*}
		A(x,t) = e^{-(x-x_{fd}(t))^2} e^{\mp\frac 12 \ln\left(1-\frac{M_0\sqrt{c}}{2}\sin\left(\frac{\xi_0}{2}\right)t\right)}
	\end{align*}
	in \eqref{eq:ansatzFDexpl} presents some modification with respect to its continuous counterpart
	\begin{align*}
		a(x,t) = e^{-(x-x_{fd}(t))^2}.
	\end{align*}
	This originates directly from \eqref{eq:R1tilde}, where the presence of some trigonometric terms of the phase $\Phi$ (that do not appear at the continuous level) needs to be compensated through some small adjustments when designing $A$.	
\end{itemize} 
\end{remark}

\subsection{The multi-dimensional case} We discuss here briefly the extension of Theorem \ref{thm:ApproxSolFD} to the general multi-dimensional case $d\geq 1$. In particular, we shall point out the main changes in the construction of the GB ansatz with respect to the one-dimensional case presented in the previous sections.

When going from 1D to multi-D, the GB ansatz for the semi-discrete wave equation \eqref{eq:mainEqFD} changes at three levels. 

First of all, the scaling factor $h^{\frac 34}$ in \eqref{eq:ansatzFD_Thm} needs to be adjusted to the dimension of the problem. In particular, to remain consistent with the continuous case of Theorem \ref{thm:ApproxSol}, we shall take a scaling factor $h^{1-\frac d4}$. Our ansatz for the semi-discrete wave equation \eqref{eq:mainEqFD} in dimension $d\geq 1$ will then take the form
\begin{align}\label{eq:ansatzFD_multiD}
	u_{fd}^h(\bx,t) = h^{1-\frac d4}A(\bx,t)e^{\frac ih \Phi(\bx,t)}.
\end{align}

Secondly, also the phase and amplitude functions $\Phi(\bx,t)$ and $A(\bx,t)$ need to be adjusted, in a way that we describe below.

\subsubsection{Multi-dimensional semi-discrete phase}

The phase function $\Phi(\bx,t)$ is still of the form
\begin{align}\label{eq:PhiFD_multiD}
	\Phi(\bx,t) = \omega(t) + \bxi_0\cdot(\bx-\bx_{fd}(t)) + \frac 12 (\bx-\bx_{fd}(t))\cdot\Big[M(t)(\bx-\bx_{fd}(t))\Big],
\end{align}
but has to be designed so that it fulfills
\begin{align*}
	D_\bx^\alpha \mathcal R_{fd}(\bx(t),t) = 0 \text{ for all } t\in\RR \text{ and } \alpha\in\NN^d\text{ with }|\alpha|\in\{0,1,2\}, 
\end{align*}
with 
\begin{align*}
	\mathcal R_{fd}(\bx,t) := 4c\left|\sin\left(\frac 12\nabla\Phi\right)\right|^2-\Phi_t^2 = 4c\sum_{i=1}^d\sin^2\left(\frac{\partial_{x_i}\Phi}{2}\right)-\Phi_t^2.
\end{align*}

Following the construction we have presented in Section \ref{sec:PhaseFD} for the one-dimensional case, we then obtain that the functions $\omega(t)$ and $M(t)$ in \eqref{eq:PhiFD_multiD} have to solve the coupled ODE system
\begin{subequations}
	\begin{empheq}[left=\empheqlbrace]{align}
	&\Big(\dot\omega(t)-\bxi_0\cdot\dot\bx(t)\Big)^2 = 4c\left|\sin\left(\frac{\bxi_0}{2}\right)\right|^2 \label{eq:ODEsystem_multiDeq1}
	\\[7pt]
	&\Big(\dot\omega(t)-\bxi_0\cdot\dot\bx(t)\Big)\dot M(t) = M(t)^\top \Theta M(t) \label{eq:ODEsystem_multiDeq2}
	\end{empheq}
\end{subequations}
with initial data $(\omega(0),M(0))=(0,M_0)$ and where $\theta = \text{diag}\Big(\theta_1,\theta_2,\ldots,\theta_d\Big)\in\RR^{d\times d}$ is a real and diagonal $d\times d$ matrix with elements
\begin{align*}
	\theta_i:= c\cos(\xi_{0,i})-\dot x_{fd,i}^2 , \quad i\in\{1,\ldots,d\}.
\end{align*}
By using the explicit expression \eqref{eq:rayFD} for the ray $\bx_{fd}(t)$, we immediately obtain from \eqref{eq:ODEsystem_multiDeq1} that
\begin{align*}
	\omega(t) = \pm2\sqrt{c}\left|\sin\left(\frac{\bxi_0}{2}\right)\right|\left(\frac{\bxi_0\cdot\sin(\bxi_0)}{4\left|\sin\left(\frac{\bxi_0}{2}\right)\right|^2} \pm 1\right)t.
\end{align*}
Finally, we obtain from \eqref{eq:ODEsystem_multiDeq2} that $M(t)$ is the solution of the differential Riccati equation
\begin{align*}
	\pm 2\sqrt{c}\left|\sin\left(\frac{\bxi_0}{2}\right)\right|\dot M(t) = M(t)^\top \Theta M(t) 
\end{align*}
and we know from \cite{babich1999higher,ralston1982gaussian} that, given a symmetric matrix $M_0\in\CC^{d\times d}$ with $\Im(M_0) > 0$, there exist a global solution $M(t)$ of \eqref{eq:ODEsystem_multiDeq2} that satisfies $M(0) = M_0$, $M(t) = M(t)^\top$ and $\Im(M(t)) > 0$ for all $t$.

\subsubsection{Multi-dimensional semi-discrete amplitude}

Finally, the amplitude function $A(\bx,t)$ has to be designed such that
\begin{align}\label{eq:AmultiDeq}
	\widetilde{\mathcal R}_1A(\bx(t),t) = 0
\end{align}
with 
\begin{align*}
	\widetilde{\mathcal R}_1A := 2A_t\Phi_t-2c\nabla A\cdot\sin(\nabla\phi) + A\Big(\Phi_t^2 - c\nabla\Phi^\top \Psi(\Phi)\nabla\Phi\Big),
\end{align*}
where $\Phi$ is given by \eqref{eq:PhiFD_multiD}-\eqref{eq:ODEsystem_multiDeq1}-\eqref{eq:ODEsystem_multiDeq2} and where $\Psi(\Phi) = \text{diag}\Big(\psi_1(\Phi),\psi_2(\Phi),\ldots,\psi_d(\Phi)\Big)\in\RR^{d\times d}$ is a real and diagonal $d\times d$ matrix with elements
\begin{align*}
	\psi_i(\Phi):= \cos(\partial_{x_i}\Phi), \quad i\in\{1,\ldots,d\}.
\end{align*}
Moreover, from \eqref{eq:PhiFD_multiD} and \eqref{eq:ODEsystem_multiDeq1} we have that
\begin{align*}
	&\Phi_t(\bx_{fd}(t),t) = \dot\omega(t) - \bxi_0\cdot\dot\bx_{fd}(t) = \pm 2\sqrt{c}\left|\sin\left(\frac{\bxi_0}{2}\right)\right|
	\\	
	&\nabla\Phi(\bx_{fd}(t),t)^\top \Psi(\Phi(\bx_{fd}(t),t))\nabla\Phi(\bx_{fd}(t),t) = (\bxi_0\odot\bxi_0)\cdot\cos(\bxi_0).
\end{align*}
Using this in \eqref{eq:AmultiDeq}, we can readily see that the amplitude $A$ has to solve the equation 
\begin{align*}
	\frac{d}{dt}A(\bx_{fd}(t),t) = \C(c,\bxi_0) A(\bx_{fd}(t),t) 
\end{align*}
with 
\begin{align*}
	\C(c,\bxi_0) = \sqrt{c}\; \frac{(\bxi_0\odot\bxi_0)\cdot\cos(\bxi_0)-4\left|\sin\left(\frac{\bxi_0}{2}\right)\right|^2}{4\left|\sin\left(\frac{\bxi_0}{2}\right)\right|}. 
\end{align*}
Choosing the initial datum $A(\bx_{fd}(0),0) = A(\bx_0,0) = 1$, we then obtain that
\begin{align*}
	A(\bx_{fd}(t),t) = e^{\C(c,\bxi_0)t}.
\end{align*}

In the same spirit of what we did in the one-dimensional case, this suggests to take the amplitude function $A(\bx,t)$ in the form 
\begin{align*}
	A(\bx,t) = e^{-|\bx-\bx_{fd}(t)|^2}e^{\C(c,\bxi_0)t}.
\end{align*}

\subsubsection{Concentration along the semi discrete characteristics}
Once the ansatz has been built according to the above discussion, we can prove that it provides quasi-solution to the semi-discrete wave equation \eqref{eq:mainEqFD} whose energy is concentrated along the rays $\bx_{fd}(t)$ given by \eqref{eq:rayFD}. In particular, we have:
\begin{itemize}
	\item[1.] $\displaystyle \sup_{t\in(0,T)}\norm{\square_{c,h}u^h_{fd}(\cdot,t)}{\ell^2(h\ZZ^d)} = \mathcal O(h^{\frac 12})$ as $h\to 0^+$.	
	
	\vspace{0.25cm}	
	\item[2.] $\displaystyle \mathcal E_h[u_{fd}^h](t) = \mathcal O(1)$ as $h\to 0^+$.
	
	\vspace{0.25cm}	
	\item[3.] $\displaystyle \sup_{t\in(0,T)} \frac{h^d}{2}\sum_{j\in\ZZ^{d,\dagger}(t)} \Big(|\partial_t u_{fd,j}^h|^2 + c|\fwd u_{fd,j}^h|^2\Big)\leq \C_1(A,\Phi) \Big(1 + h + h^2\Big) e^{-\C_2(M_0)h^{-\frac 12}}$ 
	
	\noindent where $\C_1(A,\Phi)>0$ and $\C_2(M_0)>0$ are two positive constants independent of $h$ and we have denoted 
	\begin{align*}
		\ZZ^{d,\dagger}(t) := \Big\{ \bj\in\ZZ^d\,:\, |\bx_\bj-\bx_{fd,\bj}(t)|>h^{\frac 14}\Big\}.
	\end{align*}
\end{itemize}

The proofs of the above facts are totally analogous to what we have already presented in the one-dimensional case. We leave the details to the reader.
\section{Numerical simulations}\label{sec:numerics}

We present here some numerical simulations to illustrate the results of the previous sections. On the one hand, we shall provide a graphical comparison of the solution to our finite difference wave equation \eqref{eq:mainEqFD} and the ansatz $u_{fd}^h$ given by \eqref{eq:ansatzFD_h}. This will give us a visual confirmation of the correctness of our theoretical results of the previous sections. On the other hand, we shall check numerically the approximation rates obtained in Theorem \ref{thm:ApproxSolFD} for the GB ansatz $u_{fd}^h$. 

To compute numerically the solution of the finite difference wave equation \eqref{eq:mainEqFD}, we have employed a standard leapfrog scheme in time 
\begin{align}\label{eq:LeapFrog}
	\partial_t^2 u_\bj(t) \sim \frac{1}{h_t^2}\Big(u_\bj^{n+1} - 2u_\bj^n + u_\bj^{n-1}\Big), \quad u_\bj^n:=u(\bx_\bj,t_n)	
\end{align}
on a uniform mesh $\mathcal T = \{t_n\}_{n = 1}^N$ of size $h_t$, satisfying the Courant-Friedrichs-Lewy (CFL) condition (which is necessary since the method is explicit). This corresponds to taking $h_t = \mu h$, with $\mu\in (0,1)$. In our forthcoming simulations, we have always chosen $\mu = 0.1$, although other selections of $\mu\in (0,1)$ are possible without affecting the stability of the numerical scheme and the propagation properties of our numerical solution. 

To remain consistent with \eqref{eq:ansatzFD_h}, the initial data in \eqref{eq:mainEqFD} are constructed starting from the Gaussian profile $u_{fd}^h(x,0)$. In more detail, we have taken 
\begin{align*}
	u_\bj^0 = u_{fd}^h(\bx_\bj,0) \quad\text{ and }\quad u_\bj^1 = \partial_t u_{fd}^h(\bx_\bj,0).
\end{align*}

Moreover, we have considered different values of the frequency $\bxi_0$ to illustrate the different propagation properties of the rays, hence of the solution.

We start by considering the one-dimensional case and providing in Figure \ref{fig:approx} a first graphical confirmation of the accuracy of our construction. This is done by displaying the $\ell^2(h\ZZ)$ error 
\begin{align*}
	e = \norm{u_{fd}^h-u_j^n}{2}=: \norm{u_{fd}^h-u_j^n}{\ell^2(h\ZZ)}
\end{align*}
between our ansatz $u_{fd}^h$ given by \eqref{eq:ansatzFD_h} and the numerical solution $u_j^n$ obtained through \eqref{eq:LeapFrog}, computed for different values of the frequency $\xi_0$, and noticing that this error decreases as $h\to 0^+$.
\begin{SCfigure}[0.75][h]
	\centering
	\includegraphics[width=0.5\textwidth]{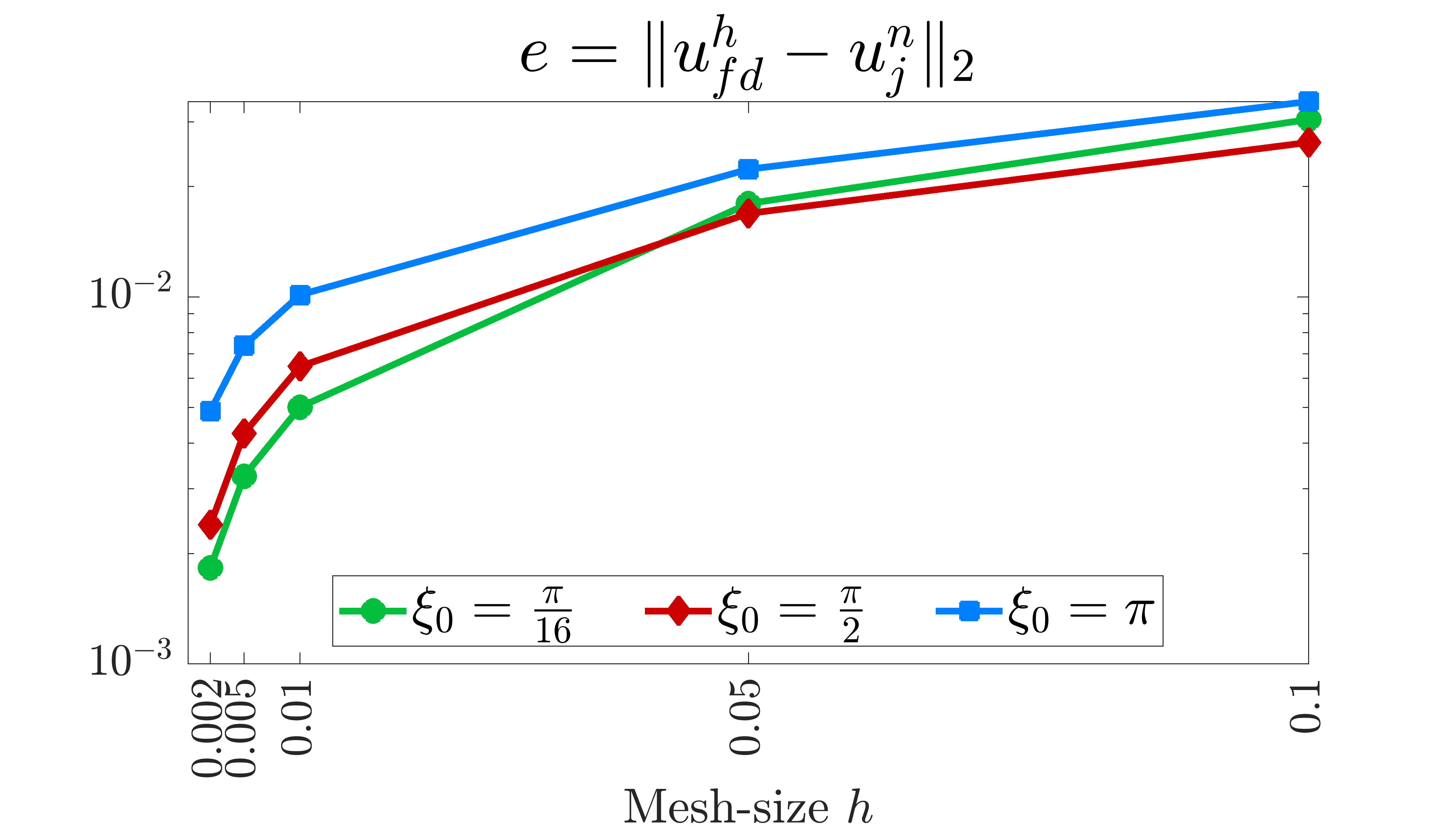}
	\caption{$\ell^2(h\ZZ)$ error between the ansatz $u_{fd}^h$ given by \eqref{eq:ansatzFD_h} and the numerical solution $u_j^n$ obtained through \eqref{eq:LeapFrog}, in space dimension $d=1$.}\label{fig:approx}
\end{SCfigure}

Moreover, in Figure \ref{fig:ansatz}, we display and compare the dynamical behavior of the ansatz $u_{fd}^h$, on the left, and the numerical solution $u_j^n$, on the right, for all the frequency previously considered.
\begin{figure}[h]
	\centering
	\includegraphics[width=\textwidth]{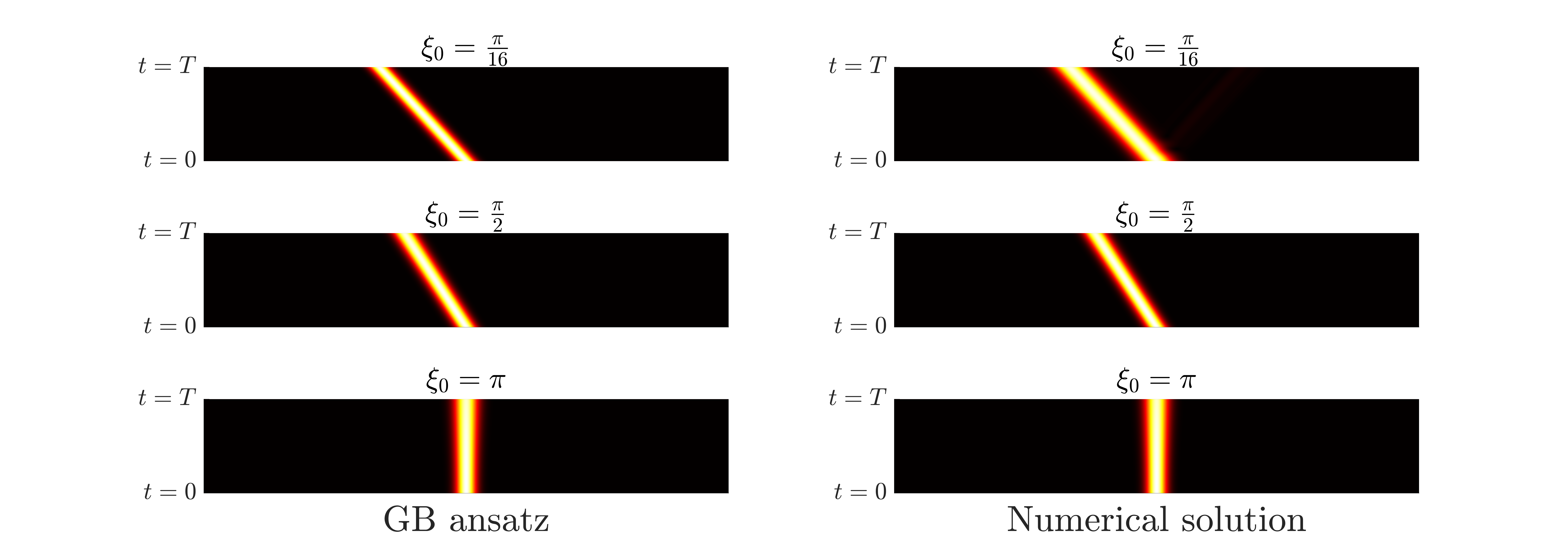}
	\caption{Comparison between the ansatz \eqref{eq:ansatzFD_h} (left) and  solution of \eqref{eq:mainEqFD} (right), in space dimension $d=1$. Both solutions are localized on the characteristic rays starting from $x_0=0$, with different frequencies $\xi_0\in (0,\pi]$.}\label{fig:ansatz}
\end{figure}

We can appreciate that both the solution of \eqref{eq:mainEqFD} computed through the scheme \eqref{eq:LeapFrog} and the GB ansatz \eqref{eq:ansatzFD_h} show the same dynamical behavior, remaining concentrated along the bi-characteristic ray $x_{fd}(t)$. In addition to that, we can clearly see how, as the frequency $\xi_0$ increases, the propagation properties of these solutions change up to the pathological case $\xi_0 = \pi$ in which we appreciate a lack of propagation in space. This is consistent with the equations for the semi-discrete bi-characteristic rays $x_{fd}(t)$ given in \eqref{eq:rayFD}. 

An analogous behavior can be appreciated also in the two-dimensional case in Figure \ref{fig:ansatz2}, we display and compare again the ansatz $u_{fd}^h$, on the left, and the numerical solution $u_\bj^n$, on the right, for different frequencies $\bxi_0$.

\begin{figure}[h]
	\centering
	\includegraphics[width=\textwidth]{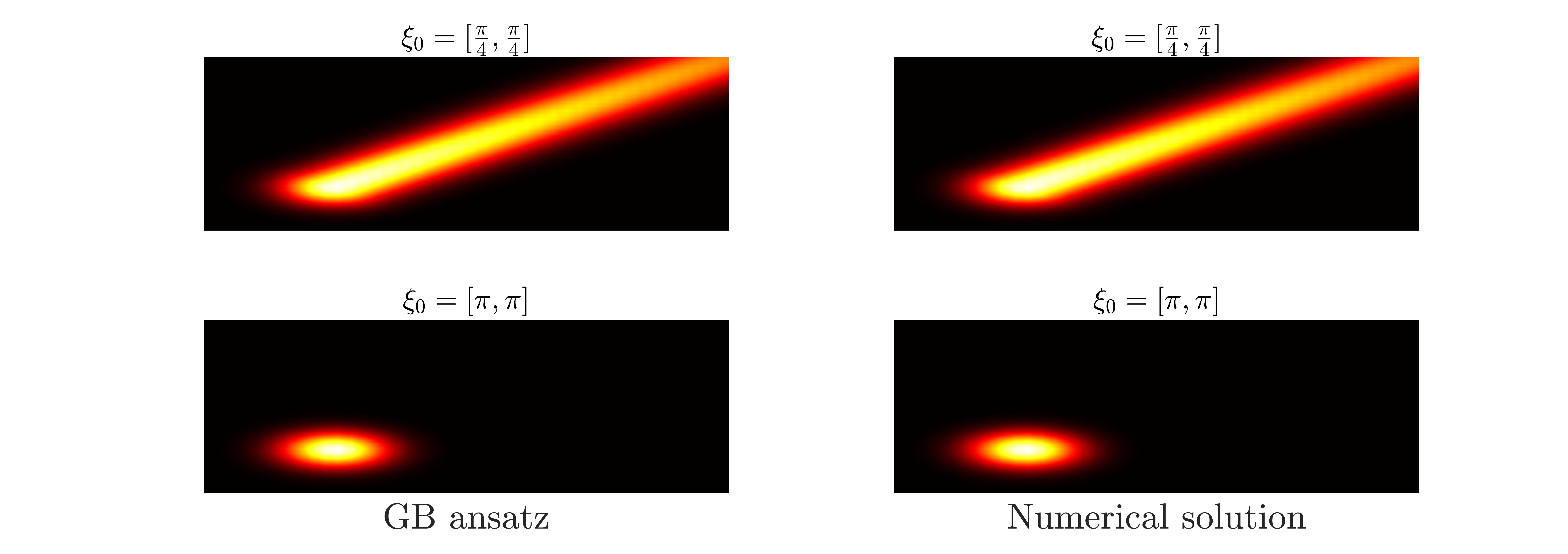}
	\caption{Comparison between the the ansatz \eqref{eq:ansatzFD_h} (left) and  solution of \eqref{eq:mainEqFD} (right) in space dimension $d=2$. Both solutions are localized on the characteristic rays starting from $\bx_0=[-1/4,-1/4]$ ,with different frequencies $\bxi_0\in (0,\pi]\times(0,\pi]$.}\label{fig:ansatz2}
\end{figure}

We can appreciate again that both the solution of \eqref{eq:mainEqFD} computed through the scheme \eqref{eq:LeapFrog} and the GB ansatz \eqref{eq:ansatzFD_h} show the same dynamical behavior, remaining concentrated along the bi-characteristic ray $\bx_{fd}(t)$. In addition to that, we can clearly see how the frequency $\bxi_0=[\pi,\pi]$ is pathological, making the velocity of propagation of the rays \eqref{eq:rayFD} vanish both in the $x$ and in the $y$ direction. This results in a wave that, for all $t$, is trapped at the initial point $\bx_0=\bx_{fd}(0)$, as we can clearly see in the plots. 

Moreover, to confirm the accuracy of our approximation even in this two-dimensional case, we display in Figure \ref{fig:approx2} the $\ell^2(h\ZZ^2)$ error 
\begin{align*}
	e = \norm{u_{fd}^h-u_\bj^n}{2}=: \norm{u_{fd}^h-u_\bj^n}{\ell^2(h\ZZ^2)}
\end{align*}
between our ansatz $u_{fd}^h$ given by \eqref{eq:ansatzFD_h} and the numerical solution $u_\bj^n$ obtained through \eqref{eq:LeapFrog}, computed for different values of the frequency $\xi_0$. 
\begin{SCfigure}[0.75][h]
	\centering
	\includegraphics[width=0.5\textwidth]{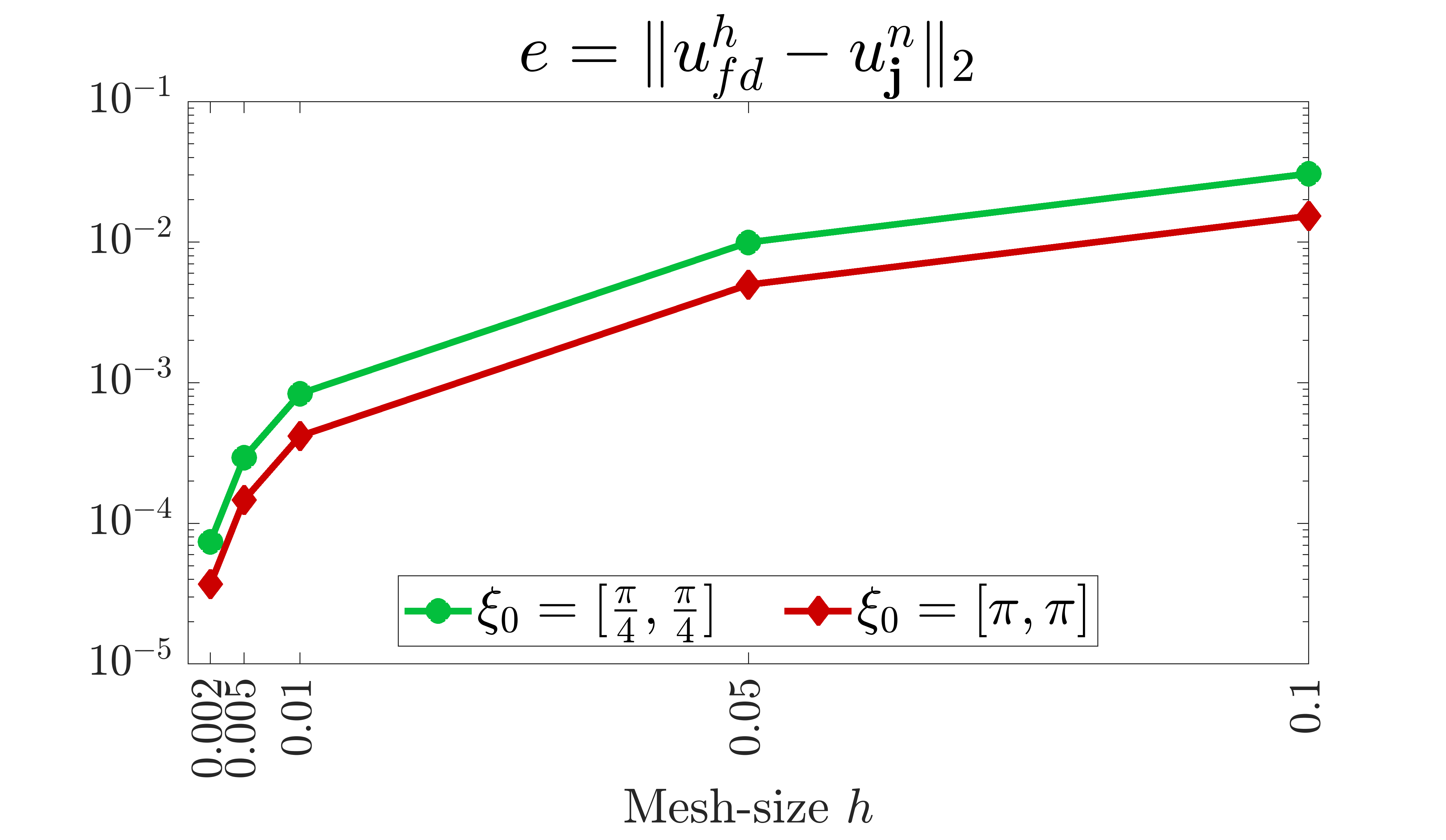}
	\caption{$\ell^2(h\ZZ^2)$ error between the ansatz $u_{fd}^h$ given by \eqref{eq:ansatzFD_h} and the numerical solution $u_\bj^n$ obtained through \eqref{eq:LeapFrog}, in space dimension $d=2$.}\label{fig:approx2}
\end{SCfigure}

Finally, in Table \ref{tab:compare} and Figure \ref{fig:comprare} we collect and display the behavior with respect to the mesh parameter $h$ of the quantities $\mathcal S[u_{fd}^h]$ and $\mathcal E[u_{fd}^h]$ introduced in \eqref{eq:approxSolFD} and \eqref{eq:approxEnergyFD}, once again in the one-dimensional case. For simplicity, we have considered there only solutions with initial frequency $\xi_0=\pi/16$, although other values of $\xi_0$ could have been employed obtaining analogous results.

\begin{table}[h]
	\begin{center}
		\begin{tabular}{|c|c|c|c|c|c|}
			\hline $h$ & $0.1$ & $0.05$ & $0.01$ & $0.005$ & $0.002$ 
			\\
			\hline \multirow{2}{*}{$\mathcal E_h[u_{fd}^h]$} & \multirow{2}{*}{$0.3197$} & \multirow{2}{*}{$0.2633$} & \multirow{2}{*}{$0.2747$} & \multirow{2}{*}{$0.2562$} & \multirow{2}{*}{$0.2599$} 
			\\
			& & & & &
			\\
			\hline\rowcolor{Gray} & & & & &
			\\
			\hline \multirow{2}{*}{$\displaystyle\mathcal S_h[u_{fd}^h]=\sup_{t\in(0,T)}\norm{\square_{c,h}u_{fd}^h(\cdot,t)}{\ell^2(h\ZZ)}$} & \multirow{2}{*}{$0.1084$} & \multirow{2}{*}{$0.0769$} & \multirow{2}{*}{$0.0452$} & \multirow{2}{*}{$0.0311$} & \multirow{2}{*}{$0.0212$} 
			\\
			& & & & &
			\\
			\hline \multirow{2}{*}{$h^{\frac 12}$} & \multirow{2}{*}{$0.3162$} & \multirow{2}{*}{$0.2236$} & \multirow{2}{*}{$0.1$} & \multirow{2}{*}{$0.0707$} & \multirow{2}{*}{$0.0316$} 
			\\
			& & & & &
			\\
			\hline
		\end{tabular}
	\end{center}
	\caption{Behavior with respect to the mesh parameter $h$ of the quantities $\mathcal E[u_{fd}^h]$ and $\mathcal S[u_{fd}^h]$ introduced in Theorem \ref{thm:ApproxSolFD}.}\label{tab:compare}
\end{table}

\begin{SCfigure}[0.7][h]
	\centering
	\includegraphics[width=0.5\textwidth]{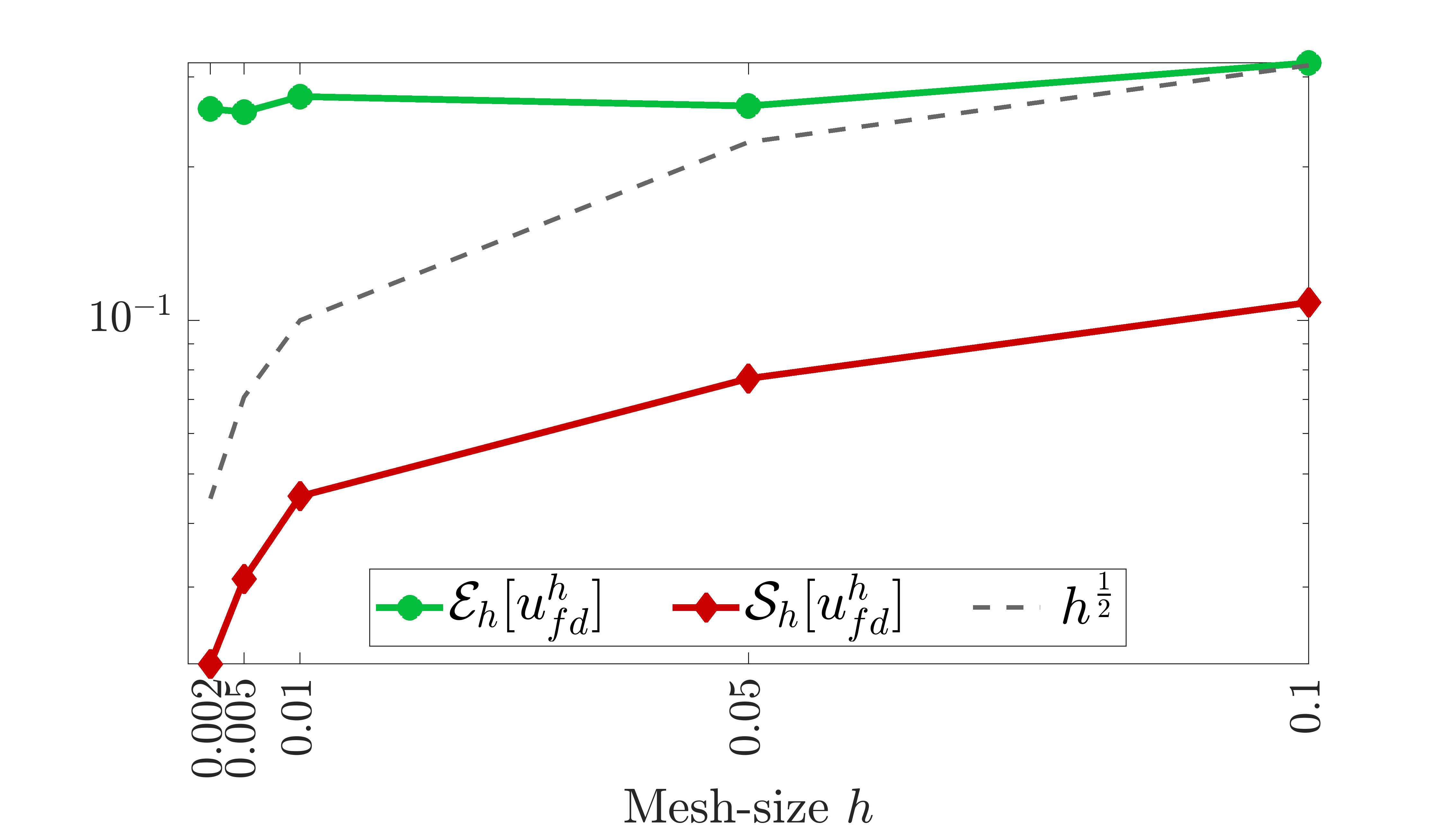}
	\caption{Behavior with respect to the mesh parameter $h$ of the quantities $\mathcal S[u_{fd}^h]$ and $\mathcal E[u_{fd}^h]$ introduced in Theorem \ref{thm:ApproxSolFD}. Case $\xi_0 = \pi/16$.}\label{fig:comprare}
\end{SCfigure}

In both cases, we can clearly appreciate how these quantities of interest behave as anticipated by Theorem \ref{thm:ApproxSolFD}. In particular, while we can see how $\mathcal S_h[u^h_{fd}]$ decreases at a rate $h^{\frac 12}$ as $h\to 0^+$, the energy $\mathcal E_h[u_{fd}^h]$ remains essentially constant with respect to $h$. This provides further confirmation of the accuracy of our theoretical results.

\section{Conclusions and open problems}\label{sec:conclusions}
In this paper, we have discussed the construction of GB solutions for the numerical approximation of wave equations, semi-discretized in space by finite difference schemes. In particular we have focused on the case of constant coefficient wave equations defined on the entire Euclidean space, and we have shown how to accurately build a GB ansatz to describe the solutions' propagation properties. Due to the well-known fact that the solutions of finite difference wave equations may exhibit pathological behaviors such as lack of space propagation at high-frequencies, classical GB constructions developed for the corresponding continuous models cannot be immediately applied. In turn, some adjustments in the GB ansatz are required, in order to compensate this lack of propagation and generate a family of quasi solutions that correctly approximates the finite difference wave dynamics. As we have showed in our main result Theorem \ref{thm:ApproxSolFD}, this can be done by introducing a correction term $\omega(t)$ in the ansatz's phase $\Phi$, to cope with the vanishing velocity of propagation of high-frequency discrete waves. The main contribution of this paper has therefore been to show that our proposed GB construction indeed produces approximated solutions for the finite difference wave equation \eqref{eq:mainEqFD}, whose energy propagates along the corresponding bi-characteristic rays, with specific approximation rates given in terms of the mesh parameter $h$. Furthermore, the numerical experiments of Section \ref{sec:numerics} have provided a confirmation of the validity and accuracy of our construction. Nevertheless, some key interesting issues have remained excluded by our analysis, and will be object of future research works.

\begin{itemize}
	\item[1.] \textbf{GB for semi-discrete wave equations on bounded domains} The constructions of Theorems \ref{thm:ApproxSol} and \ref{thm:ApproxSolFD} can be adapted to obtain highly localized solutions for the corresponding finite difference approximation of the Dirichlet problem
	\begin{align}\label{eq:mainEqDomain}
		\begin{cases}
			u_{tt}(\bx,t) - c\Delta u(\bx,t) = 0, & (\bx,t)\in \Omega\times (0,T)
			\\
			u(\bx,t) = 0, & (\bx,t)\in \partial\Omega\times (0,T)
			\\
			u(\bx,0) = u_0(\bx), \;\; u_t(\bx,0) = u_1(\bx), & \bx \in\Omega 
		\end{cases}
	\end{align}
	$\Omega\subset\RR^d$ being a bounded and regular domain. Obviously, since $\Omega$ is bounded, there may exist rays that exit this domain in finite time. So for an arbitrary $T > 0$ a GB beam will not satisfy in general the Dirichlet boundary condition. In order to overcome this difficulty, one has to superpose two GB beams - concentrated one on the positive branch $\bx_{fd}^+(t)$ of the bi-characteristics and the other on the negative branch $\bx_{fd}^-(t)$ - and include in the construction the Snell's law to handle the reflections at the boundary. An overview of this construction for the continuous wave equation \eqref{eq:mainEqDomain} can be found, for instance, in \cite[Proposition 8]{macia2002lack}. The extension of this constructions to the case of finite difference approximations of \eqref{eq:mainEqDomain} is, to the best of our knowledge, still missing and it would be an interesting complement to our analysis.
	
	\item[2.] \textbf{GB for semi-discrete wave equations on non-uniform meshes} Our GB construction has focused on the employment of a uniform mesh 
	\begin{align*}
		\mathcal G^h:= \Big\{\bx_\bj := \bj h,\,\bj\in\ZZ^d\,\Big\}
	\end{align*}
	for the space discretization. A natural extension of our work would then be to address the case of non-uniform meshes  
	\begin{align*}
		\mathcal G^h_g:= \Big\{g_\bj := g(\bx_\bj),\,x_\bj\in\mathcal G^h\,\Big\},
	\end{align*}
	obtained by transforming $\mathcal G_h$ through some suitable diffeomorphism $g: \RR^d\to\RR$. The introduction of a non-uniform mesh for the finite difference space semi-discretization of wave equations is a delicate issue that needs to be handled with a particular care. On the one hand, it has been shown in \cite{ervedoza2016numerical} that there are suitable choices of $\mathcal G_h$ that may help fixing the high-frequency pathologies of semi-discretized waves. On the other hand, a refinement of the mesh may introduce further pathological dynamics, with a series of unexpected propagation properties at high frequencies, that are not observed when employing a uniform mesh. For instance, in space dimension $d=1$, one can generate spurious solutions presenting the so-called \textit{internal reflection} phenomenon, meaning that the waves change direction as if they were hitting some fictitious boundary (see Figure \ref{fig:waveNonunif}).
	\begin{SCfigure}[1.5][h]
		\includegraphics[scale=0.075]{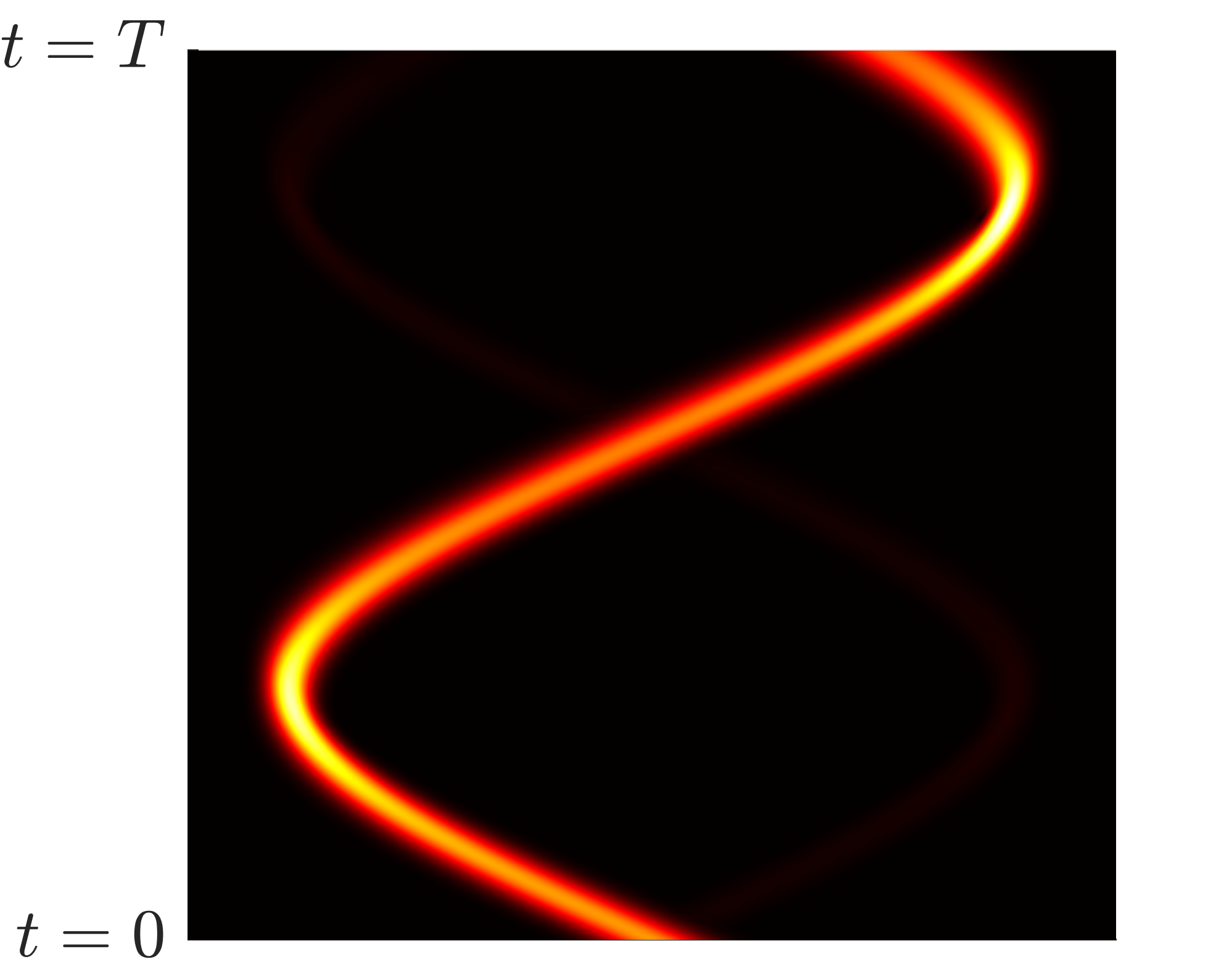}
		\caption{Numerical solution corresponding to the mesh $\mathcal G^h_g$ with $g(x) = \tan(\pi x/4)$ and initial frequency $\xi_0 = 7\pi/15$. The mesh is finer in the interior of the domain and coarser near the boundary, and generates the \textbf{internal reflection phenomenon}, which makes the wave changing direction even though no boundary is present in the discretization domain.}\label{fig:waveNonunif}
	\end{SCfigure}
	These effects are enhanced in the multi-dimensional case where the interaction and combination of such behaviors in the various space directions may produce, for instance, the \textit{rodeo effect}, i.e., waves that are trapped by the numerical grid in closed loops (see Figure \ref{fig:rodeo}).
	\begin{SCfigure}[1.5][h]
		\includegraphics[scale=0.15]{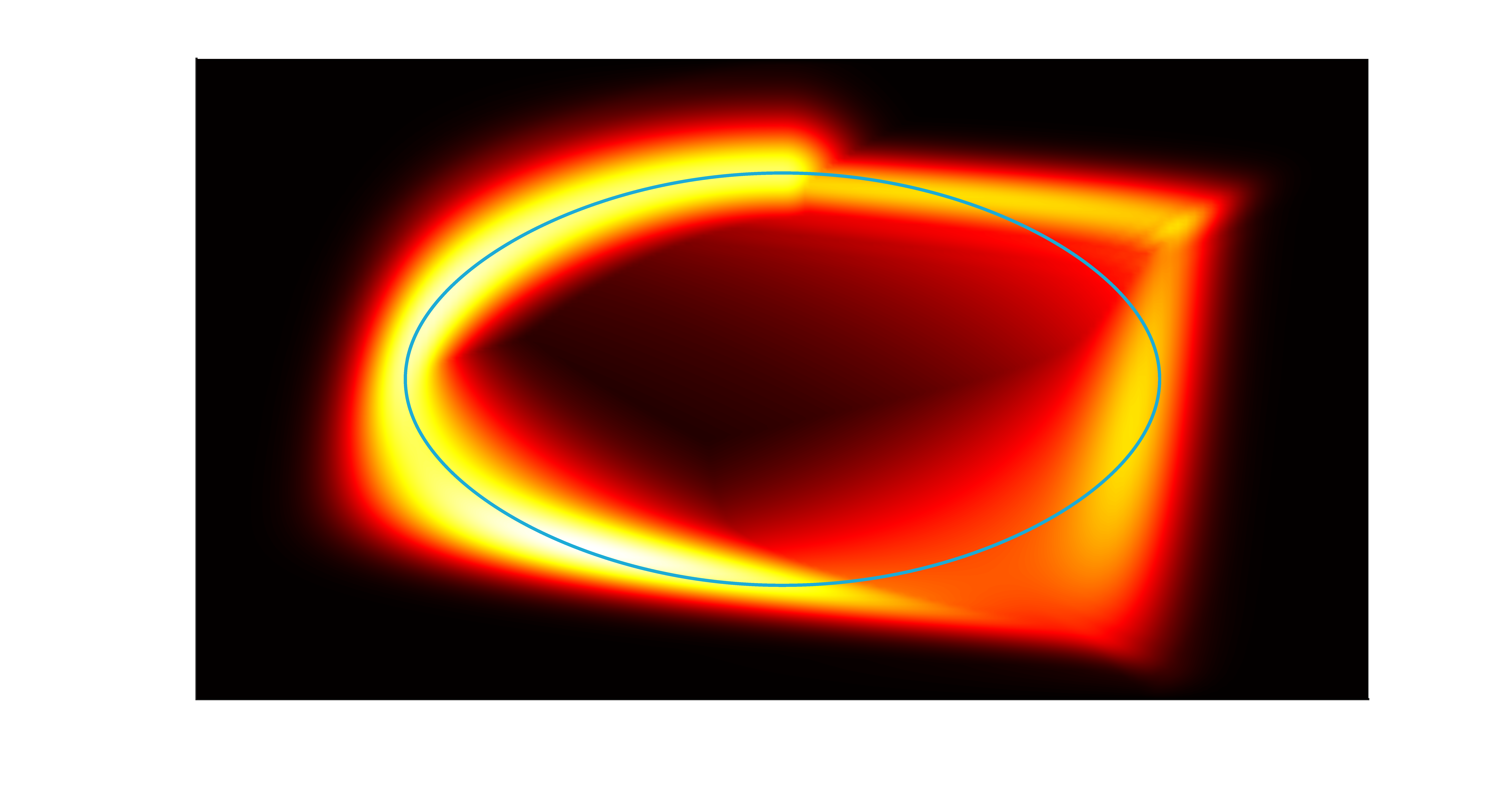}
		\caption{The rodeo effect in two-dimensional wave equations, semi-discretized in space on a non-uniform mesh. The blue line describes the characteristics, that propagate in time on a closed loop.}\label{fig:rodeo}
	\end{SCfigure}
	As usual, these new phenomena are due to changes in the Hamiltonian system providing the equations for the bi-characteristic rays, which in this case is generated by the principal symbol
	\begin{align*}
		\widetilde{\mathcal P}(\bx,t,\bxi,\tau) = -\tau^2 + \frac{4c}{g'(g^{-1}(\bx))}\sum_{i=1}^d\sin^2\left(\frac{\xi_i}{2}\right).
	\end{align*} 
	The GB ansatz should then be designed by adapting our construction in Section \ref{sec:GBdiscrete}, including in it the new equations of the bi-characteristic rays. Notice that this may require the introduction of some further correction term in the definition of the phase, in order to deal with those added pathological behaviors that do not appear when discretizing on a uniform mesh.
	
	\item[3.] \textbf{Alternative numerical schemes}  In this paper, we have considered only the case of finite difference approximations for the wave equation. On the other hand, it would be interesting to consider also other numerical schemes such as mixed finite elements, which have been introduced, for instance, in \cite{castro2006boundary,ervedoza2010observability,glowinski1989mixed} with control purposes. According to our analysis, the first step would be to identify the principal symbol associated with the FE discretization of the wave equation. From there, the study of the corresponding Hamiltonian system would again allow one to understand the propagation properties of the numerical solutions and, consequently, properly design a GB ansatz. 
	
	\item[4.] \textbf{GB for fully-discrete wave equations} Our GB construction of Section \ref{sec:GBdiscrete} has focused on the finite difference semi-discretization of the wave equation \eqref{eq:mainEqFD}, defined through the semi-discrete scheme 
	\begin{align*}
		\partial_t^2 u_j(t) = \frac{c}{h^2} \Big(u_{j+1} - 2u_j + u_{j-1}\Big).
	\end{align*}
	It would be worth to extend this analysis to the case of fully-discrete wave equations, approximated through 
	\begin{align}\label{eq:fullyDiscr}
		u_j^{n+1} - 2u_j^n + u_j^{n-1} = c\frac{h_t^2}{h^2} \Big(u_{j+1}^n - 2u_j^n + u_{j-1}^n\Big).
	\end{align}	
	When working at this fully discrete level, the analysis of wave propagation is quite more involved than the semi-discrete case, since now the bi-characteristic rays are generated by a principal symbol in the form 
	\begin{align*}
		\widehat{\mathcal P}(\bx,t,\bxi,\tau) = -\sin^2\left(\frac \tau2\right) + c\sum_{i=1}^d\sin^2\left(\frac{\xi_i}{2}\right),
	\end{align*} 
	with a trigonometric structure also in the time-frequency $\tau$. Some preliminary study of the dynamical behavior of these fully-discrete waves has been conducted in \cite{vichnevetsky1987wave}. Nevertheless, to the best of our knowledge, a GB construction for \eqref{eq:fullyDiscr} is still am open (and very interesting) problem that definitely deserves further investigation. 
\end{itemize}

\appendix{
\section{Proof of Theorem \ref{thm:ApproxSol}}\label{appendixCont}	

We give here the proof of Theorem \ref{thm:ApproxSol}, concerning the construction of a GB ansatz for the wave equation \eqref{eq:mainEqR}. To this end, we shall first need the following technical result.

\begin{proposition}\label{prop:Integral}
Let $\bx_0\in\RR^d$, $N\in\NN$ and $f\in L^\infty(\RR^d)$ be a function satisfying 
\begin{align}\label{eq:fCond}
	|\bx-\bx_0|^{-N} f(\bx)\in L^\infty(\RR^d).
\end{align}
Then, for any positive constant $0<\beta\in\RR$, we have
\begin{align}\label{eq:Integral}
	\int_{\RR^d} \left|f(\bx)e^{-k\beta |\bx-\bx_0|^2}\right|^2\,d\bx \leq \C k^{-\frac d2-N}
\end{align}
for some $\C = \C(d,N,\beta) > 0$ that does not depend on $k$.
\end{proposition}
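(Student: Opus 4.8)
The plan is to convert the two hypotheses into a single pointwise polynomial bound on $f$, substitute it into the integral, and then rescale the integration variable by $\sqrt{k}$ so that the entire $k$-dependence is pulled out as an explicit power, leaving behind a convergent, $k$-independent Gaussian moment.

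First I would note that \eqref{eq:fCond} is nothing but the pointwise estimate
\begin{align*}
	|f(\bx)| \leq C_f\,|\bx-\bx_0|^N \quad\text{for a.e. } \bx\in\RR^d, \qquad C_f:=\norm{|\bx-\bx_0|^{-N}f}{L^\infty(\RR^d)}.
\end{align*}
Inserting the square of this bound into the left-hand side of \eqref{eq:Integral} yields
\begin{align*}
	\int_{\RR^d}\left|f(\bx)e^{-k\beta|\bx-\bx_0|^2}\right|^2 d\bx \leq C_f^2\int_{\RR^d}|\bx-\bx_0|^{2N}e^{-2k\beta|\bx-\bx_0|^2}\,d\bx.
\end{align*}
The extra assumption $f\in L^\infty$ is not even needed at this stage, since the Gaussian factor already dominates the polynomial growth of $|\bx-\bx_0|^{2N}$ at infinity and guarantees integrability.

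The decisive step is the change of variables $\by=\sqrt{k}\,(\bx-\bx_0)$, under which $d\bx=k^{-d/2}\,d\by$, $|\bx-\bx_0|^{2N}=k^{-N}|\by|^{2N}$ and $e^{-2k\beta|\bx-\bx_0|^2}=e^{-2\beta|\by|^2}$. This transforms the right-hand side into
\begin{align*}
	C_f^2\,k^{-\frac d2-N}\int_{\RR^d}|\by|^{2N}e^{-2\beta|\by|^2}\,d\by,
\end{align*}
so that the whole $k$-dependence is isolated as the factor $k^{-d/2-N}$ claimed in \eqref{eq:Integral}. It then remains only to observe that the residual integral is finite: passing to polar coordinates it reduces to a multiple of $\int_0^{+\infty} r^{2N+d-1}e^{-2\beta r^2}\,dr$, a convergent Gamma-type integral for every $d\geq 1$, $N\in\NN$ and $\beta>0$. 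Setting
\begin{align*}
	\C := C_f^2\int_{\RR^d}|\by|^{2N}e^{-2\beta|\by|^2}\,d\by
\end{align*}
gives a constant depending only on $d$, $N$, $\beta$ (and on $f$ through $C_f$) but not on $k$, which completes the argument.

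Since there is no genuine analytical difficulty, the only point requiring attention is the bookkeeping in the rescaling: the factor $k^{-d/2-N}$ arises from the simultaneous contribution of the Jacobian $k^{-d/2}$ and the homogeneity $k^{-N}$ of the polynomial weight, and it is precisely this combination that produces the sharp exponent in \eqref{eq:Integral}.
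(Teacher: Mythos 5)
Your proof is correct and takes essentially the same route as the paper's own argument: extract the pointwise bound $|f(\bx)|\leq C_f|\bx-\bx_0|^N$ from \eqref{eq:fCond}, rescale to pull out the factor $k^{-\frac d2-N}$, and evaluate the remaining Gaussian moment in polar coordinates. The only cosmetic differences are that you rescale by $\sqrt{k}$ rather than $\sqrt{2k\beta}$ (so $\beta$ stays inside the residual integral instead of being absorbed), and you are slightly more explicit than the paper in noting that the constant depends on $f$ only through $C_f$ and that the standalone assumption $f\in L^\infty(\RR^d)$ is not actually needed.
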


\begin{proof}
Using \eqref{eq:fCond}, we have that there exists a function $g\in L^\infty(\RR^d)$ such that
\begin{align*}
	f(\bx) = |\bx-\bx_0|^N g(\bx).
\end{align*}
In view of this, we can apply the H\"older inequality to estimate
\begin{align*}
	\int_{\RR^d} \left|f(\bx)e^{-k\beta |\bx-\bx_0|^2}\right|^2\,d\bx & = \int_{\RR^d} \left||\bx-\bx_0|^Ng(\bx)e^{-k\beta |\bx-\bx_0|^2}\right|^2\,d\bx \leq \C(d) \int_{\RR^d} |\bx-\bx_0|^{2N} e^{-2k\beta |\bx-\bx_0|^2}\,d\bx. 
\end{align*}
Now, by means of the change of variable $\BF{y} = \sqrt{2k\beta}(\bx-\bx_0)$, we obtain that
\begin{align*}
	\int_{\RR^d} |\bx-\bx_0|^{2N} e^{-2k\beta |\bx-\bx_0|^2}\,d\bx = (2k\beta)^{-\frac d2-N} \int_{\RR^d} |\BF{y}|^{2N} e^{-|\BF{y}|^2}\,d\BF{y}.
\end{align*}
Finally, by employing polar coordinates, we can compute
\begin{align*}
	\int_{\RR^d} |\BF{y}|^{2N} e^{-|\BF{y}|^2}\,d\BF{y} &= \int_{\mathbb{S}^{d-1}} \int_0^{+\infty} r^{2N+d-1} e^{-r^2}\,dr d\sigma = \frac 12|\mathbb{S}^{d-1}| \int_0^{+\infty} \rho^{N+\frac d2-1} e^{-\rho}\,d\rho 
	\\
	&= \frac{d\pi^{\frac d2}}{2\Gamma\left(\frac d2+1\right)} \int_0^{+\infty} \rho^{N+\frac d2-1} e^{-\rho}\,d\rho = \frac{d\pi^{\frac d2}}{2}\frac{\Gamma\left(\frac d2+N\right)}{\Gamma\left(\frac d2+1\right)},
\end{align*}
where $\Gamma$ denotes the Euler gamma function. Putting everything together, we finally obtain that 
\begin{align*}
	\int_{\RR^d} \left|f(\bx)e^{-k\beta |\bx-\bx_0|^2}\right|^2\,d\bx \leq \C(d) \left[(2\beta)^{-\frac d2-N} \frac{d\pi^{\frac d2}}{2}\frac{\Gamma\left(\frac d2+N\right)}{\Gamma\left(\frac d2+1\right)}\right]k^{-\frac d2-N}.
\end{align*}

\end{proof}

\begin{proof}[Proof of Theorem \ref{thm:ApproxSol}]

We organize the proof in three steps, one for each statement of the theorem.

\medskip 
\noindent\textbf{Step 1: proof of \eqref{eq:approxSol}.} Starting from \eqref{eq:ansatz3}, we have that	
\begin{align*}
	\norm{\square_c u^k}{L^2(\RR^d)}^2 = \int_{\RR^d} |\square_c u^k|^2\,d\bx \leq k^{\frac d2-2}\int_{\RR^d} \left|e^{ik\phi}r_0\right|^2\,d\bx + k^{\frac d2} \int_{\RR^d} \left|e^{ik\phi}r_1\right|^2\,d\bx + k^{\frac d2+2}\int_{\RR^d}\left|e^{ik\phi}r_2\right|^2\,d\bx,
\end{align*}
where, we recall
\begin{align*}
	&r_0 = \square_c a
	\\
	&r_1 = a\square_c\phi + 2 a_t\phi_t - 2c \nabla a\cdot\nabla\phi
	\\
	&r_2 = \Big(c|\nabla\phi|^2-\phi_t^2\Big)a.  
\end{align*}

Since $a,\phi\in C^\infty(\RR^d\times\RR)$, we clearly have that also $r_0,r_1,r_2 \in C^\infty(\RR^d\times\RR)$. Moreover, by construction, $r_1$ and $r_2$ vanish on $\bx=\bx(t)$ up to the order $0$ and $2$. In view of that, we have that $r_0,r_1,r_2$ satisfy \eqref{eq:fCond} with $N = 0$, $N = 1$ and $N = 3$, respectively. Then, applying Proposition \ref{prop:Integral} with $\bx_0 = \bx(t)$ and the previous values of $N\in\NN$, we get
\begin{align*}
	k^{\frac d2-1}\int_{\RR^d} \left|e^{ik\phi}r_0\right|^2\,d\bx \leq \C(a,\phi) k^{-2}
	\\
	k^{\frac d2} \int_{\RR^d} \left|e^{ik\phi}r_1\right|^2\,d\bx \leq \C(a,\phi) k^{-1}
	\\
	k^{\frac d2+1}\int_\RR\left|e^{ik\phi}r_2\right|^2\,d\bx \leq \C(a,\phi) k^{-1}.
\end{align*}
Putting everything together, since $k\geq 1$, we finally obtain that 
\begin{align*}
	\norm{\square_c u^k}{L^2(\RR^d)}^2 \leq \C\Big(k^{-2} +  k^{-1} +  k^{-1}\Big) \leq \C(a,\phi) k^{-1},
\end{align*}
that is,
\begin{align*}
	\norm{\square_c u^k}{L^2(\RR^d)} \leq \C(a,\phi) k^{-\frac 12}.
\end{align*}

\medskip 
\noindent\textbf{Step 2: proof of \eqref{eq:approxEnergy}.} Starting from \eqref{eq:energy}, we have that	
\begin{align*}
	E_c(u^k(\cdot,t)) =&\, \frac 12 \int_{\RR^d} \Big(|u^k_t(\cdot,t)|^2 + c|\nabla u^k(\cdot,t)|^2\Big)\,d\bx = \Xi_0^k(t) + \Xi_1^k(t) + \Xi_2^k(t),
\end{align*}
where we have denoted 
\begin{align*}
	& \Xi_0^k(t) := \frac{k^{\frac d2}}{2} \int_{\RR^d} |a|^2\Big(|\phi_t|^2 + c|\nabla\phi|^2\Big)e^{-k(\bx-\bx(t))\cdot\big[\Im (M_0)(\bx-\bx(t))\big]}\,d\bx  
	\\
	& \Xi_1^k(t) := k^{\frac d2-1} \int_{\RR^d} a\Big(a_t\phi_t + c\nabla a\cdot\nabla\phi\Big)e^{-k(\bx-\bx(t))\cdot\big[\Im (M_0)(\bx-\bx(t))\big]}\,d\bx
	\\
	& \Xi_2^k(t) := \frac{k^{\frac d2-2}}{2} \int_{\RR^d} \Big(|a_t|^2 + c|\nabla a|^2\Big)e^{-k(\bx-\bx(t))\cdot\big[\Im (M_0)(\bx-\bx(t))\big]}\,d\bx.
\end{align*}
Notice that, since $a,\phi \in C^\infty(\RR^d\times\RR)$ and $\Im(M_0)>0$, we have that for all $t\in (0,T)$
\begin{align*}
	&|\Xi_1^k(t)| \leq \C(a,\phi)k^{\frac d2-1} \int_{\RR^d} e^{-k(\bx-\bx(t))\cdot\big[\Im (M_0)(\bx-\bx(t))\big]}\,d\bx = \C(a,\phi)\frac{d\Gamma\left(\frac{d+1}{2}\right)}{2\Gamma\left(\frac d2 +1\right)}\left(\frac{\pi}{\text{det}(\Im(M_0))}\right)^{\frac d2} k^{-1}
	\\
	&|\Xi_2^k(t)| \leq \C(a,\phi)k^{\frac d2-2} \int_{\RR^d} e^{-k(\bx-\bx(t))\cdot\big[\Im (M_0)(\bx-\bx(t))\big]}\,d\bx = \C(a,\phi)\frac{d\Gamma\left(\frac{d+1}{2}\right)}{2\Gamma\left(\frac d2 +1\right)}\left(\frac{\pi}{\text{det}(\Im(M_0))}\right)^{\frac d2} k^{-2}
\end{align*}
Hence,
\begin{align}\label{eq:Xi23lim}
	\sup_{t\in(0,T)} \Big(|\Xi_1^k(t)| + |\Xi_2^k(t)|\Big) \to 0, \quad\text{ as } k\to +\infty.	
\end{align}

As for the term $\Xi_0^k(t)$, replacing in it the explicit expression \eqref{eq:aThm} of the amplitude function $a$, we get that 
\begin{align}\label{eq:Xi1}
	\Xi_0^k(t) &= \frac{k^{\frac d2}}{2} \int_{\RR^d} \Big(|\phi_t|^2 + c|\nabla\phi|^2\Big)e^{-(\bx-\bx(t))\cdot\Big[\Big(2 I_d + k\Im (M_0)\Big)(\bx-\bx(t))\Big]}\,d\bx,  
\end{align}
where $I_d$ denotes the identity matrix in dimension $d\times d$. Moreover, using \eqref{eq:ray} and the explicit expression \eqref{eq:phiThm} of the phase function $a$, we can compute
\begin{align*}
	&|\phi_t|^2 = \frac{c}{|\bxi_0|^2}\bigg[|\bxi_0|^2 + \bxi_0\cdot\Big(M_0(\bx-\bx(t))\Big)\bigg]^2
	\\
	&c|\nabla\phi|^2 = c\,\Big|\bxi_0 + M_0(\bx-\bx(t))\Big|^2
\end{align*}
and we obtain from \eqref{eq:Xi1} that
\begin{align*}
	\Xi_0^k(t) =&\; \frac{c}{2|\bxi_0|^2} k^{\frac d2} \int_{\RR^d} \bigg[|\bxi_0|^2 + \bxi_0\cdot\Big(M_0(\bx-\bx(t))\Big)\bigg]^2	
	e^{-(\bx-\bx(t))\cdot\Big[\Big(2 I_d + k\Im (M_0)\Big)(\bx-\bx(t))\Big]}\,d\bx  
	\\
	&+ \frac c2 k^{\frac d2} \int_{\RR^d} \Big|\bxi_0 + M_0(\bx-\bx(t))\Big|^2e^{-(\bx-\bx(t))\cdot\Big[\Big(2 I_d + k\Im (M_0)\Big)(\bx-\bx(t))\Big]}\,d\bx.
\end{align*}
Now, since $2 I_d + k\Im (M_0)>0$, we can apply the change of variables
\begin{align*}
	\Big(2 I_d + k\Im (M_0)\Big)^{\frac 12} (\bx-\bx(t)) = \BF{y}
\end{align*}
and we get 
\begin{align*}
	\Xi_0^k(t) =&\; \frac{c}{2|\bxi_0|^2} \frac{k^{\frac d2}}{\text{det}\Big(2 I_d + k\Im (M_0)\Big)^{\frac d2}} \int_{\RR^d} \Bigg[|\bxi_0|^2 + \bxi_0\cdot\bigg(\Big(2 I_d + k\Im (M_0)\Big)^{-\frac 12}M_0\BF{y}\bigg)\Bigg]^2 e^{-|\BF{y}|^2}\,d\BF{y}  
	\\
	&+ \frac c2 \frac{k^{\frac d2}}{\text{det}\Big(2 I_d + k\Im (M_0)\Big)^{\frac d2}} \int_{\RR^d} \Big|\bxi_0 + \Big(2 I_d + k\Im (M_0)\Big)^{-\frac 12}M_0\BF{y}\Big|^2e^{-|\BF{y}|^2}\,d\BF{y}.
\end{align*}

Finally, the two integrals in the expression above can be computed by employing polar coordinates. In this way, we obtain that
\begin{align*}
	\Xi_0^k(t) = \C\Big(d,\bxi_0,M_0\Big)\pi^{\frac d2}\frac{k^{\frac d2}\Big(1+k^{-1}\Big)}{\text{det}\Big(2 I_d + k\Im (M_0)\Big)^{\frac d2}} = \frac{\C\Big(d,\bxi_0,M_0\Big)\pi^{\frac d2}}{\text{det}\Big(2 k^{-1} I_d + \Im (M_0)\Big)^{\frac d2}}\Big(1+k^{-1}\Big).
\end{align*}
Hence,
\begin{align}\label{eq:Xi1lim}
	\lim_{k\to +\infty} \Xi_0^k(t) = \C\Big(d,\bxi_0,M_0\Big)\left(\frac{\pi}{\text{det}\big(\Im (M_0)\big)}\right)^{\frac d2}.
\end{align}
From \eqref{eq:Xi23lim} and \eqref{eq:Xi1lim}, we immediately get \eqref{eq:approxEnergy}.

\medskip 
\noindent\textbf{Step 3: proof of \eqref{eq:approxRay}.} Since $a,\phi \in C^\infty(\RR^d\times\RR)$ and $k\geq 1$, we have that	
\begin{align*}
	\int_{\RR^d\setminus B_k(t)} \Big(|u^k_t(\cdot,t)|^2 + c|\nabla u^k(\cdot,t)|^2\Big)\,d\bx & \leq \C\left(k^{\frac d2} + k^{\frac d2-1} + k^{\frac d2-2}\right) \int_{\RR^d\setminus B_k(t)} e^{-k(\bx-\bx(t))\cdot\big[\Im (M_0)(\bx-\bx(t))\big]}\,d\bx
	\\
	& \leq \C k^{\frac d2} \int_{\RR^d\setminus B_k(t)} e^{-k(\bx-\bx(t))\cdot\big[\Im (M_0)(\bx-\bx(t))\big]}\,d\bx
	\\
	&= \C k^{\frac d2} \int_{\RR^d\setminus B\left(0,k^{-\frac 14}\right)} e^{-k\BF{y}\cdot\big[\Im (M_0)\BF{y}\big]}\,d\BF{y},
\end{align*}
with $\C = \C(a,\phi)>0$ a positive constant not depending on $k$. Moreover, by employing the change of variable $\BF{y} = k^{-\frac 12}\BF{z}$, we have that
\begin{align*}
	\C k^{\frac d2} \int_{\RR^d\setminus B\left(0,k^{-\frac 14}\right)} e^{-k\BF{y}\cdot\big[\Im (M_0)\BF{y}\big]}\,d\BF{y} &= \C\int_{\RR^d\setminus B\left(0,k^{\frac 14}\right)} e^{-\BF{z}\cdot\big[\Im (M_0)\BF{z}\big]}\,d\BF{z} 
	\\
	&= \C\int_{\RR^d\setminus B\left(0,k^{\frac 14}\right)} e^{-\frac 12\BF{z}\cdot\big[\Im (M_0)\BF{z}\big]}e^{-\frac 12\BF{z}\cdot\big[\Im (M_0)\BF{z}\big]}\,d\BF{z}	
	\\
	&\leq \sup_{\RR^d\setminus B\left(0,k^{\frac 14}\right)}\left(e^{-\frac 12\BF{z}\cdot\big[\Im (M_0)\BF{z}\big]}\right) \int_{\RR^d\setminus B\left(0,k^{\frac 14}\right)} e^{-\frac 12\BF{z}\cdot\big[\Im (M_0)\BF{z}\big]}\,d\BF{z}
	\\
	&= e^{-\frac 12\text{det}\big(\Im (M_0)\big)k^{\frac 12}} \int_{\RR^d\setminus B\left(0,k^{\frac 14}\right)} e^{-\frac 12\BF{z}\cdot\big[\Im (M_0)\BF{z}\big]}\,d\BF{z}
	\\
	&\leq e^{-\frac 12\text{det}\big(\Im (M_0)\big)k^{\frac 12}} \int_{\RR^d} e^{-\frac 12\BF{z}\cdot\big[\Im (M_0)\BF{z}\big]}\,d\BF{z} 
	\\
	&= \frac{d\Gamma\left(\frac{d+1}{2}\right)}{2\Gamma\left(\frac d2 +1\right)}\left(\frac{2\pi}{\text{det}(\Im(M_0))}\right)^{\frac d2}e^{-\frac 12\text{det}\big(\Im (M_0)\big)k^{\frac 12}}.
\end{align*}
From this last estimate, \eqref{eq:approxRay} follows immediately.
\end{proof}

\section{Proof of Theorem \ref{thm:ApproxSolFD}}\label{appendixFD}	

\subsection{Concentration of solutions}

We give here the proof of Theorem \ref{thm:ApproxSolFD}, concerning the construction of a GB ansatz for the semi-discrete wave equation \eqref{eq:mainEqFD}. 

\begin{proof}[Proof of Theorem \ref{thm:ApproxSolFD}]
	
We are going to split the proof into three steps, one for each point in the statement of the theorem. Moreover, in what follows, we will denote by $\C>0$ a generic positive constant independent of $h$. This constant may change even from line to line.
	
\medskip
\noindent\textbf{Step 1: proof of}\eqref{eq:approxSolFD}. Starting from \eqref{eq:dAlambertianFD}, we have that	
\begin{align*}
	\norm{\square_{c,h} u^h_{fd}(\cdot,t)}{\ell^2(h\ZZ)}^2 =&\; h\sum_{j\in\ZZ} \left|\square_{c,h} u^h_{fd,j}(t)\right|^2
	\\
	\leq&\; h^{\frac 52}\sum_{j\in\ZZ} \left|e^{\frac ih\Phi_j}\left(\mathcal R_0A_j + A_j \frac{\mathcal R_2-\mathcal R_{fd}}{h^2}\right)\right|^2 
	\\
	&+ h^{\frac 12}\sum_{j\in\ZZ}\left(\left|e^{\frac ih\Phi_j}(\mathcal R_1A_j - \RT_1 A_j)\right|^2 + \left|e^{\frac ih\Phi_j}\RT_1 A_j\right|^2\right)
	\\
	&+ h^{-\frac 32}\sum_{j\in\ZZ} \left|e^{\frac ih\Phi_j}A_j\mathcal R_{fd}\right|^2.
\end{align*}
Moreover, using \eqref{eq:phiFD}, we see that
\begin{align*}
	e^{\frac ih\Phi_j} = e^{\frac ih \Re(\Phi_j)}e^{-\frac 1h \Im(\Phi_j)} = e^{\frac ih \big(\omega(t)+\xi_0(x_j-x_{fd,j}(t)) + \frac 12\Re (M(t))(x_j-x_{fd,j}(t))^2\big)}e^{-\frac{1}{2h}\Im (M(t))(x_j-x_{fd,j}(t))^2}
\end{align*}
and, therefore,
\begin{align}\label{eq:dicreteGaussian}
	\left|e^{\frac ih\Phi_j}\right|^2 = e^{-\frac{1}{2h}\Im (M(t))(x_j-x_{fd,j}(t))^2}.
\end{align}
In view of this, we get
\begin{align*}
	\norm{\square_{c,h} u^h_{fd}(\cdot,t)}{\ell^2(h\ZZ)}^2 \leq& \; h^{\frac 52}\sum_{j\in\ZZ} e^{-\frac 1h \Im(M(t))(x_j-x_{fd,j}(t))^2}\left|\mathcal R_0A_j + A_j \frac{\mathcal R_2-\mathcal R_{fd}}{h^2}\right|^2 
	\\
	&+ h^{\frac 12}\sum_{j\in\ZZ} e^{-\frac 1h \Im(M(t))(x_j-x_{fd,j}(t))^2}\left(\left|\mathcal R_1A_j - \RT_1 A_j\right|^2 + \left|\RT_1 A_j\right|^2 \right)
	\\
	&+ h^{-\frac 32}\sum_{j\in\ZZ} e^{-\frac 1h \Im(M(t))(x_j-x_{fd,j}(t))^2}\left|A_j\mathcal R_{fd}\right|^2.
\end{align*}
Now, by means of \eqref{eq:RError} and \eqref{eq:R1tildeError}, we have that 
\begin{align*}
	h^{\frac 52} & \sum_{j\in\ZZ} e^{-\frac 1h \Im(M(t))(x_j-x_{fd,j}(t))^2}\left|\mathcal R_0A_j + A_j \frac{\mathcal R-\mathcal R_{fd}}{h^2}\right|^2 
	\\
	&\leq h^{\frac 52}\sum_{j\in\ZZ} e^{-\frac 1h \Im(M(t))(x_j-x_{fd,j}(t))^2}\Big(\left|\mathcal R_0A_j\right|^2 + \C \left|A_j\right|^2\Big) 
\end{align*}
and
\begin{align*}
	h^{\frac 12} &\sum_{j\in\ZZ} e^{-\frac 1h \Im(M(t))(x_j-x_{fd,j}(t))^2}\left|\mathcal R_1A_j - \RT_1 A_j\right|^2 
	\\
	&\leq \C h^{\frac 52}\sum_{j\in\ZZ} e^{-\frac 1h \Im(M(t))(x_j-x_{fd,j}(t))^2}\Big(|\Dhc\Phi_j|^2 + |\Dhc\Phi_j|^4\Big) +\C h^{\frac 92}\sum_{j\in\ZZ} e^{-\frac 1h \Im(M(t))(x_j-x_{fd,j}(t))^2}.
\end{align*}
Hence, 
\begin{align*}
	\norm{\square_{c,h} u^h_{fd}(\cdot,t)}{\ell^2(h\ZZ)}^2 \leq& \; \C h^{\frac 92}\sum_{j\in\ZZ} e^{-\frac 1h \Im(M(t))(x_j-x_{fd,j}(t))^2} +\C h^{\frac 52}\sum_{j\in\ZZ} e^{-\frac 1h \Im(M(t))(x_j-x_{fd,j}(t))^2}
	\\
	&+ h^{\frac 12}\sum_{j\in\ZZ} e^{-\frac 1h \Im(M(t))(x_j-x_{fd,j}(t))^2}\left|\RT_1 A_j\right|^2 
	\\
	&+ h^{-\frac 32}\sum_{j\in\ZZ} e^{-\frac 1h \Im(M(t))(x_j-x_{fd,j}(t))^2}\left|A_j\mathcal R_{fd}\right|^2.
\end{align*}
The first two terms on the right-hand side of the above inequality can be estimated by observing that
\begin{align}\label{eq:GaussianSum1}
	\sum_{j\in\ZZ} e^{-\frac 1h \Im(M(t))(x_j-x_{fd,j}(t))^2} = \mathcal O(h^{-\frac 12}).
\end{align}
This can be easily seen by considering the Riemann sum approximating on the mesh $\mathcal G_h$ the integral 
\begin{align*}
	\int_\RR e^{-\frac 1h \Im(M(t))(x-x_{fd}(t))^2}\,dx = \C h^{\frac 12}.
\end{align*}
Hence, using \eqref{eq:GaussianSum1}, we obtain that 
\begin{align*}
	\Big(h^{\frac 92} + h^{\frac 52}\Big) \sum_{j\in\ZZ} e^{-\frac 1h \Im(M(t))(x_j-x_{fd,j}(t))^2} = \mathcal O(h^4) + \mathcal O(h^2) = \mathcal O(h^2)		
\end{align*}
and, therefore,
\begin{align*}
	\norm{\square_{c,h} u^h_{fd}(\cdot,t)}{\ell^2(h\ZZ)}^2 \leq& \; h^{\frac 12}\sum_{j\in\ZZ} e^{-\frac 1h \Im(M(t))(x_j-x_{fd,j}(t))^2}\left|\RT_1 A_j\right|^2 
	\\
	&+ h^{-\frac 32}\sum_{j\in\ZZ} e^{-\frac 1h \Im(M(t))(x_j-x_{fd,j}(t))^2}\left|A_j\mathcal R_{fd}\right|^2 + \mathcal O(h^2).
\end{align*}
	
Finally, since by construction $\RT_1 A_j$ and $\mathcal R_{fd}$ vanish on the discrete characteristics $x_{fd}(t)$ up to the order $0$ and $2$ respectively, we have that 
\begin{align*}
	&\sum_{j\in\ZZ} e^{-\frac 1h \Im(M(t))(x_j-x_{fd,j}(t))^2}\left|\RT_1 A_j\right|^2 = \mathcal O(h^{\frac 12})
	\\
	&\sum_{j\in\ZZ} e^{-\frac 1h \Im(M(t))(x_j-x_{fd,j}(t))^2}\left|A_j\mathcal R_{fd}\right|^2 = \mathcal O(h^{\frac 52}).
\end{align*}
	
This is the discrete version of Proposition \ref{prop:Integral}, which can be proven once again by applying Riemann sum to approximate the corresponding integrals. In view of this, we obtain  
\begin{align*}
	h^{\frac 12}\sum_{j\in\ZZ} e^{-\frac 1h \Im(M(t))(x_j-x_{fd,j}(t))^2}\left|\RT_1 A_j\right|^2 + h^{-\frac 32}\sum_{j\in\ZZ} e^{-\frac 1h \Im(M(t))(x_j-x_{fd,j}(t))^2}\left|A_j\mathcal R_{fd}\right|^2 = \mathcal O(h).
\end{align*}
and we can finally conclude that
\begin{align*}
	\norm{\square_{c,h} u^h_{fd}(\cdot,t)}{\ell^2(h\ZZ)}^2 = \mathcal O(h^2) + \mathcal O(h) = \mathcal O(h), \quad\text{ as } h\to 0^+.
\end{align*}
	
\medskip
\noindent\textbf{Step 2: proof of}\eqref{eq:approxEnergyFD}. First of all, starting from \eqref{eq:energyFD}, \eqref{eq:discreteSob} and \eqref{eq:ansatzFD_h}, we can write 
\begin{align*}
	\mathcal E_h[u_{fd}^h] = \frac h2\sum_{j\in\ZZ} \Big(|\partial_t u_{fd,j}^h|^2 + c|\fwd u_{fd,j}^h|^2\Big) = \frac{h^{\frac 52}}{2}\sum_{j\in\ZZ} \left(\left|\partial_t \left(A_j e^{\frac ih \Phi_j}\right)\right|^2 + c\left|\fwd \left(A_j e^{\frac ih \Phi_j}\right)\right|^2\right). 
\end{align*}
Moreover, we can easily compute
\begin{align*}
	&\partial_t \left(A_j e^{\frac ih \Phi_j}\right) = \frac 1h e^{\frac ih \Phi_j}\Big(h\partial_t A_j + iA_j\partial_t\Phi_j\Big) 
	\\[10pt]
	&\fwd \left(A_j e^{\frac ih \Phi_j}\right) = \frac 1h e^{\frac ih \Phi_j}\Big(h\fwd A_je^{i\fwd\Phi_j} + \big(e^{i\fwd\Phi_j}-1\big)A_j\Big) 
\end{align*}
Using these identities and \eqref{eq:dicreteGaussian}, we then obtain
\begin{align*}
	\mathcal E_h[u_{fd}^h] =&\; \frac{h^{\frac 52}}{2}\sum_{j\in\ZZ} e^{-\frac 1h \Im(M(t))(x_j-x_{fd,j}(t))^2} \mathcal S_1 + h^{\frac 32}\sum_{j\in\ZZ} e^{-\frac 1h \Im(M(t))(x_j-x_{fd,j}(t))^2} \mathcal S_2 
	\\
	&+ \frac{h^{\frac 12}}{2}\sum_{j\in\ZZ} e^{-\frac 1h \Im(M(t))(x_j-x_{fd,j}(t))^2} \mathcal S_3,
\end{align*}
where we have denoted
\begin{subequations}
	\begin{align}
		&\mathcal S_1 := \Big|\partial_t A_j\Big|^2 + c\Big|\fwd A_je^{i\fwd\Phi_j}\Big|^2 \label{eq:S1}
		\\
		&\mathcal S_2 := \Big|iA_j\partial_t A_j\partial_t\Phi_j\Big| + c\Big|\fwd A_je^{i\fwd\Phi_j}\big(e^{i\fwd\Phi_j}-1\big)A_j\Big| \label{eq:S2}
		\\	
		&\mathcal S_3 := \Big|iA_j\partial_t\Phi_j\Big|^2 + c\Big|\big(e^{i\fwd\Phi_j}-1\big)A_j\Big|^2. \label{eq:S3}
	\end{align}
\end{subequations}
	
Moreover, by construction of $A$ and $\Phi$, we have that $|\mathcal S_i|\leq \C(A,\phi)$ for $i\in\{1,2,3\}$. Using this and \eqref{eq:GaussianSum1}, we finally obtain
\begin{align*}
	\mathcal E_h[u_{fd}^h] = \mathcal O(h^2) + \mathcal O(h) + \mathcal O(1) = \mathcal O(1), \quad\text{ as } h\to 0^+.
\end{align*}
	
\medskip
\noindent\textbf{Step 3: proof of}\eqref{eq:approxRayFD}. Repeating the computations of Step 2, we have that
\begin{align*}
	\frac h2\sum_{j\in\ZZ^\dagger(t)} \Big(|\partial_t u_{fd,j}^h|^2 + c|\fwd u_{fd,j}^h|^2\Big) =&\; \frac{h^{\frac 52}}{2}\sum_{j\in\ZZ^\dagger(t)} e^{-\frac 1h \Im(M(t))(x_j-x_{fd,j}(t))^2} \mathcal S_1
	\\
	&+ h^{\frac 32}\sum_{j\in\ZZ^\dagger(t)} e^{-\frac 1h \Im(M(t))(x_j-x_{fd,j}(t))^2} \mathcal S_2
	\\
	&+ \frac{h^{\frac 12}}{2}\sum_{j\in\ZZ^\dagger(t)} e^{-\frac 1h \Im(M(t))(x_j-x_{fd,j}(t))^2} \mathcal S_3,
\end{align*}
with $\mathcal S_1$, $\mathcal S_2$ and $\mathcal S_3$ defined in \eqref{eq:S1}, \eqref{eq:S2} and \eqref{eq:S3}, respectively. Hence, recalling that $|\mathcal S_i|\leq \C(A,\Phi)$ for $i\in\{1,2,3\}$, we obtain
\begin{align}\label{eq:estPrel}
	\frac h2\sum_{j\in\ZZ^\dagger(t)} \Big(|\partial_t u_{fd,j}^h|^2 + c|\fwd u_{fd,j}^h|^2\Big) \leq \C \Big(h^{\frac 52} + h^{\frac 32} + h^{\frac 12}\Big)\sum_{j\in\ZZ^\dagger(t)} e^{-\frac 1h \Im(M(t))(x_j-x_{fd,j}(t))^2}.
\end{align}
Now, employing the transformation 
\begin{align*}
	x_j-x_{fd,j} = y_j,
\end{align*}
and denoting 
\begin{align*}
	\ZZ^\ddagger(t) := \Big\{ j\in\ZZ\,:\, |y_j|>h^{\frac 14}\Big\},
\end{align*}
we have that 
\begin{align*}
	\sum_{j\in\ZZ^\dagger(t)} e^{-\frac 1h \Im(M(t))(x_j-x_{fd,j}(t))^2} &= \sum_{j\in\ZZ^\ddagger(t)} e^{-\frac 1h \Im(M(t))y_j^2} \leq e^{-\frac 12 \Im(M(t))h^{-\frac 12}} \sum_{j\in\ZZ^\ddagger(t)} e^{-\frac{1}{2h} \Im(M(t))y_j^2}
	\\
	&\leq e^{-\frac 12 \Im(M(t))h^{-\frac 12}} \sum_{j\in\ZZ} e^{-\frac{1}{2h} \Im(M(t))y_j^2} = \C h^{-\frac 12} e^{-\frac 12 \Im(M(t))h^{-\frac 12}}.
\end{align*}
Substituting this in \eqref{eq:estPrel}, we finally conclude that
\begin{align*}
	\frac h2\sum_{j\in\ZZ^\dagger(t)} \Big(|\partial_t u_{fd,j}^h|^2 + c|\fwd u_{fd,j}^h|^2\Big) \leq \C \Big(1 + h + h^2\Big) e^{-\frac 12 \Im(M(t))h^{-\frac 12}}.
\end{align*}
	
\end{proof}

}

\section*{Acknowledgments} The authors wish to acknowledge Dr. Konstantin Zerulla (Friedrich-Alexander-Universit\"at Erlangen-N\"urnberg, Germany) for his careful revision and precious comments on early versions of this work.

\bibliographystyle{acm}
\bibliography{biblio}

\end{document}